\documentclass[10pt,reqno,fleqn]{amsart}

\usepackage{amsmath,amscd,amssymb,amsfonts,amsthm,courier,relsize,bm}
\usepackage{hyperref,enumitem,mathrsfs,slashed,mathtools, extarrows}
\usepackage[bottom]{footmisc}
\usepackage{xcolor}  

\linespread{1.05}

\usepackage[bitstream-charter]{mathdesign} 
\usepackage[T1]{fontenc}

\hypersetup{
    colorlinks,
    linkcolor={red!50!black},
    citecolor={blue!70!black},
    urlcolor={blue!80!black}
}

\textwidth15.8 cm
\oddsidemargin.4cm
\evensidemargin.4cm
%
\usepackage[cmtip,matrix,arrow]{xy}


\newtheorem{theorem}{Theorem}[section]
\newtheorem{corollary}[theorem]{Corollary}
\newtheorem{proposition}[theorem]{Proposition}

\newtheorem{lemma}[theorem]{Lemma}
\newtheorem*{keylemma}{Key Lemma}
\theoremstyle{definition}    

\newtheorem{definition}[theorem]{Definition}

\newtheorem{remark}[theorem]{Remark}

\newcommand{\pair}[2]{\langle #1, #2 \rangle}
\newcommand{\ignore}[1]{}

\newcommand{\ol}[1]{\overline{#1}}

\newcommand{\ti}[1]{\widetilde{#1}}
\newcommand{\wh}[1]{\widehat{#1}}
\newcommand{\st}[1]{\mathsf{#1}}
\newcommand{\scr}[1]{\mathscr{#1}}

\newcommand{\tn}[1]{\textnormal{#1}}
\renewcommand{\i}{{\mathrm{\bold{i}\,}}}


\def\g{\ensuremath{\mathfrak{g}}}

\def\S{\ensuremath{\mathfrak{S}}}
\def\f{\ensuremath{\mathfrak{f}}}

\def\F{\ensuremath{\mathcal{F}}}
\def\E{\ensuremath{\mathcal{E}}}

\def\L{\ensuremath{\mathcal{L}}}
\def\R{\ensuremath{\mathcal{R}}}
\def\C{\ensuremath{\mathcal{C}}}

\def\Cl{\ensuremath{\textnormal{Cl}}}
\def\Cliff{\ensuremath{\textnormal{Cliff}}}
\def\c{\ensuremath{\mathsf{c}}}

\def\bC{\ensuremath{\mathbb{C}}}
\def\bR{\ensuremath{\mathbb{R}}}

\def\bZ{\ensuremath{\mathbb{Z}}}

\def\End{\ensuremath{\textnormal{End}}}

\def\ker{\ensuremath{\textnormal{ker}}}
\def\id{\ensuremath{\textnormal{id}}}
\def\pt{\ensuremath{\textnormal{pt}}}

\def\index{\ensuremath{\textnormal{index}}}
\def\supp{\ensuremath{\textnormal{supp}}}
\def\dim{\ensuremath{\textnormal{dim}}}
\def\KK{\ensuremath{\textnormal{KK}}}

\def\K{\ensuremath{\textnormal{K}}}
\def\cl{\ensuremath{\textnormal{cl}}}
\def\ra{\ensuremath{\rangle}}
\def\la{\ensuremath{\langle}}

\begin{document}
\sloppy
\title{A KK-theoretic perspective on deformed Dirac operators}

\author{Yiannis Loizides, Rudy Rodsphon and Yanli Song}

\maketitle

\setlength{\skip\footins}{0.5cm}

\vspace{-0.4cm}

\setlength{\parindent}{0mm}

\begin{center}
\small \emph{Dedicated to Gennadi Kasparov for his $70^{\text{th}}$ birthday}
\end{center}

\begin{abstract}
We study the index theory of a class of perturbed Dirac operators on non-compact manifolds of the form $\mathsf{D}+\i\c(X)$, where $\c(X)$ is a Clifford multiplication operator by an orbital vector field with respect to the action of a compact Lie group. Our main result is that the index class of such an operator factors as a KK-product of certain KK-theory classes defined by $\mathsf{D}$ and $X$.  As a corollary we obtain the excision and cobordism-invariance properties first established by Braverman.  An index theorem of Braverman relates the index of $\mathsf{D}+\i\c(X)$ to the index of a transversally elliptic operator.  We explain how to deduce this theorem using a recent index theorem for transversally elliptic operators due to Kasparov.
\end{abstract}

\section*{Introduction} \setcounter{section}{0}

The present work studies, from the perspective of KK-theory, the index theory of a class of Dirac-type operators on non-compact manifolds. A Dirac operator $\st{D}$ on a non-compact manifold always determines a K-homology class $[\st{D}]$, but extracting a `numerical index' (in a possibly generalized sense) often requires additional ingredients, which can be either boundary conditions at infinity, or appropriate devices playing the role of compactness, relevant choices being dictated by the geometric situation at hand. In some cases, such a device can be a suitable perturbation of $\st{D}$.\\

A standard example on $\bR^n$ is the operator $d+d^\ast+\tn{ext}(x)+\tn{int}(x)$ acting on $L^2(\bR^n,\wedge T^\ast \bR^n)$, where $\tn{ext}(\cdot)$, $\tn{int}(\cdot)$ denote exterior and interior multiplication respectively.  Its square is a harmonic oscillator so that the operator has index one. On the KK-theoretic side, it is well-known that this operator represents the KK-product of the Bott/dual-Dirac and the Dirac elements, which is then the identity. Extensive generalizations of this calculation (in various forms) laid the foundations of important techniques used to prove KK-theoretic Poincar\'e duality results in index theory, or more broadly in much of the work done on the Baum-Connes conjecture. \\   

Another source of interesting examples is operators of Callias-type, introduced in \cite{Callias}. The perturbation here is a suitable `potential' $\Phi$, and the operator $\st{D}+\Phi$ is Fredholm. KK-product interpretations of the Fredholm index have been provided in \cite{Bunke}, in \cite{Kucerovsky} via unbounded KK-theory with recent improvements in \cite{KaadLesch}. Loosely speaking, the potential defines a $K$-theory class $[\Phi]$ on the manifold, and the index of $\st{D}+\Phi$ arises as the KK-product $[\Phi] \wh{\otimes}_{C_0(M)} [\st{D}]$.\\ 

In this article, we shall focus on a class of operators that we will call \emph{deformed Dirac operators}. Their study originates partly from \cite{TianZhang} and has been systematized by Braverman in \cite{Braverman2002}. These operators have found interesting applications, notably in the resolution of a conjecture of Vergne on the quantization commutes with reduction problem \cite{MaZhang}, and subsequent extensions of this work (e.g. \cite{HochsSong3}). \\

Let $M$ be a complete Riemannian manifold equipped with an isometric action $\rho$ of a compact Lie group $G$, let $E$ be a $G$-equivariant Clifford module bundle over $M$ and let $\st{D}$ be a Dirac operator acting on sections of $E$. In our setting the additional geometric data used to obtain a well-defined index is a $G$-equivariant map $\!\nu \colon M \to \g = \text{Lie}(G)$ such that the vector field $\bm{\!\nu}\colon m \in M \mapsto \rho_m(\!\nu(m))$ has a compact vanishing locus; Braverman \cite{Braverman2002} referred to such a map as a \emph{taming map}.  Deformed Dirac operators are then operators of the form $\st{D}_{f \!\nu} = \st{D} + \i f \c(\bm{\!\nu})$, where $f \in C^{\infty}(M)$ is a function satisfying a growth condition at infinity (see Section \ref{Section2}), and $\c(\bm{\!\nu})$ is Clifford multiplication.  A deformed Dirac operator has a well-defined equivariant index, similar to transversally elliptic operators (in the sense of Atiyah \cite{Atiyah}). We will come back to this analogy shortly.\\

An important technical consideration in studying $\st{D}_{f \!\nu}$ lies in the calculation of the commutator of $\st{D}$ with the perturbation. Whereas this commutator is a bounded operator in the two first examples, it is a differential operator of order one (in the orbit directions) in the case of deformed Dirac operators, which makes the KK-product factorization of their index much less straightforward.  \\

One of the aims of this paper is to provide such a KK-product interpretation. Heuristically, the idea is rather natural, and involves viewing the perturbation as a dual-Dirac-like element $[\!\nu]$ in the orbit directions. This requires an `orbital Clifford algebra' $\Cl_{\Gamma}(M)$ introduced recently by Kasparov \cite{KasparovTransversallyElliptic}. A simple but key observation is that the operator $\st{D}$ determines a class in the $K$-homology of the crossed product algebra $G\ltimes \Cl_\Gamma(M)$ (see the Key Lemma in Section \ref{Section1}); the difficulty with the commutator mentioned above then disappears.\\

The rest of the paper explores some consequences by revisiting the index theorem, excision and cobordism invariance properties obtained by Braverman in \cite{Braverman2002}. From the perspective developed, the last two points become almost automatic and follow mostly from functorial arguments. Braverman's index theorem states that the analytic index of a deformed Dirac operator is equal to the topological index of a transversally elliptic symbol, obtained by deforming the symbol of $\st{D}$ by the vector field $\bm{\!\nu}$.  We show how this result can be deduced from the KK-product factorization and Kasparov's index theorem for transversally elliptic operators \cite[Theorem 8.18]{KasparovTransversallyElliptic}. This makes the relationship between the indices of deformed Dirac operators and of transversally elliptic operators more transparent.  It is also possible, as shown in \cite{MaZhang}, to relate such an index to an Atiyah-Patodi-Singer-type index, but this will not be discussed here. \\

A final note on quantization commutes with reduction in the case of a Hamiltonian $G$-space with proper moment map: many obvious similarities between the analytic approach relying on the properties of the deformed Dirac operator \cite{TianZhang, MaZhang, HochsSong3}, and the topological one based on $K$-theory classes of transversally elliptic symbols \cite{Paradan, ParadanVergne, ParadanFormal} may be spotted. In view of our last observation, it seems plausible that both approaches become essentially the same (up to Poincar\'e duality). It would be desirable to develop a synthesis of these methods in the framework of KK-theory, hopefully offering a unifying perspective on these works, and optimistically leading to conceptual simplifications. This is partly the motivation of the present paper, and will be the topic of future work. Some steps in this direction have been initiated in \cite{RodsphonQR}.\\

The contents of the paper are as follows:

$\bullet$ \emph{Section \ref{Section1}} reviews some material from \cite{KasparovTransversallyElliptic}, and in particular the notion of orbital Clifford algebra, which is used to build a transverse index class from the Dirac operator.

$\bullet$ \emph{Section \ref{Section2}} contains the main result, explaining how the equivariant index of deformed Dirac operators can be seen in terms of a KK-product. Preparatory material on deformed Dirac operators is included. 

$\bullet$ \emph{Section \ref{Section3}} revisits the excision and cobordism invariance of the index of deformed Dirac operators obtained in \cite{Braverman2002}, from the point of view developed in Section 2. 

$\bullet$ \emph{Section \ref{Section4}} reviews further material from \cite{KasparovTransversallyElliptic}, and derives Braverman's index theorem.  The level of knowledge of Kasparov theory expected of the reader is somewhat higher in this section.  Although the context and the $C^*$-algebras involved may be less familiar, the KK-theoretic techniques deployed are fairly standard.

$\bullet$ \emph{Appendices.} Certain arguments in the main body are streamlined if one has the flexibility to work on non-complete manifolds, and Appendix \ref{AppendixA} explains how to deal with this case. Evident extensions of classical results stated in Section 1 have their proofs relegated to Appendices \ref{AppendixB} and \ref{AppendixC}. \\

\textbf{Notation.} Throughout the article $G$ denotes a compact Lie group with Lie algebra $\g$.  We denote $\i = \sqrt{-1}$. Our convention for Clifford algebras is $\c(v)^2=-|v|^2$.  The notation $\la v\ra:=(1+|v|^2)^{1/2}$.  Given a Hermitian or Euclidean vector bundle $V \rightarrow M$ on a Riemannian manifold and section $s\colon M \rightarrow V$, we generally write $|s|$ for the point-wise norm of $s$, and $\|s\|$ for the $L^2$-norm using the Riemannian volume form.  If $A$ is a $C^\ast$-algebra and $\E$ a Hilbert $A$-module, we generally write $\mathcal{K}_A(\E)$ (resp. $\mathcal{B}_A(\E)$) for the compact (resp. adjointable) operators in the sense of Hilbert modules. Last but not least, we use the notation $\wh{\otimes}$ for graded tensor products. \\

\textbf{Acknowledgements.} We want to address very special thanks to G. Kasparov; how much the present article owes to his recent work (and the multiple discussions we had about it) will be evident thoughout the reading. Happy $70^\text{th}$ Birthday Genna! We also thank N. Higson and M. Braverman for helpful discussions.  Y. Song is supported by NSF grants DMS-1800667, 1952557.

\section{Transverse $K$-homology class of the Dirac operator} \setcounter{section}{1} \label{Section1}

Let $(M^n, g)$ be an even-dimensional Riemannian manifold (not necessarily complete) equipped with an isometric action of a compact\footnote{In fact, the results and proofs of this section remain valid even if $G$ is only a locally compact Lie group acting properly and isometrically on $M$.} Lie group $G$. Let $\g$ denote the Lie algebra of $G$.  Let $\Cliff(TM)$ denote the Clifford algebra bundle of $M$, and $\Cl_\tau(M)=C_0(M,\Cliff(TM))$ the $C^\ast$-algebra of continuous sections vanishing at infinity.

\subsection{Orbital Clifford algebra, and a key lemma}  We first review some material from the recent work of Kasparov \cite{KasparovTransversallyElliptic}.  For every $m \in M$, let   
\[ \rho_m \colon \beta \in \g \longmapsto \left. \dfrac{d}{dt}\right\vert_{t=0} e^{-t\beta} \cdot m \, \in T_m M, \]
denote the infinitesimal action at the point $m$.  We define
\[ \Gamma_m=\rho_m(\g)\subset T_mM \]
to be the tangent space to the orbit $G\cdot m$ at $m$. We would like to define spaces of `smooth' and `continuous' sections of $\sqcup_{m \in M} \Gamma_m$.  Since the orbits of a compact Lie group action typically vary in dimension (the map $m \mapsto \dim(\Gamma_m)$ is only lower semi-continuous in general), this takes a little care.\\  

Let $\rho \colon \g_M:=M\times \g \rightarrow TM$ denote the smooth bundle map induced by the maps $\rho_m$, i.e. $\rho$ is the \emph{anchor map} for the action Lie algebroid $\g_M$.  By post-composition $\rho$ induces a map (also denoted $\rho$) on sections.  We define the space of \emph{smooth and compactly supported sections of $\Gamma$} to be
\[ C^\infty_c(M,\Gamma):=\rho(C^\infty_c(M,\g_M)) \subset C^\infty_c(M,TM). \]
This is a simple instance of a \emph{singular foliation} in the sense of \cite{AndroulidakisSkandalis}: a $C^\infty_c(M)$-submodule of the space of smooth compactly supported vector fields which is involutive and locally finitely generated.  The space of continuous sections of $TM$ vanishing at infinity $C_0(M,TM)$ is the Banach space completion of $C^\infty_c(M,TM)$ with respect to the supremum norm.  We define the space of \emph{continuous sections of $\Gamma$ vanishing at infinity} $C_0(M,\Gamma)$ to be the closure of $C^\infty_c(M,\Gamma)$ in $C_0(M,TM)$.  Dropping the vanishing conditions we obtain similar definitions of the space of smooth sections and the space of continuous sections.  In particular, this endows $\Gamma=\sqcup_{m\in M}\Gamma_m$ with the structure of a continuous field of vector spaces over $M$, that we call the \emph{orbital tangent field}. (Recall that $\Gamma$ being a continuous field of vector spaces means that it admits a set of sections $\sigma$ that generate $\Gamma$ point-wise and such that $m\mapsto |\sigma_m|$ is a continuous function on $M$.)

\begin{definition} \label{def:CliffGamma}
The \emph{orbital Clifford algebra} $\Cl_\Gamma(M)$ is the $C^\ast$-subalgebra of $\Cl_\tau(M)$ generated by $C_0(M,\Gamma)$ and $C_0(M)$. $\Cl_\Gamma(M)$ is a $C_0(M)$-algebra, and may equivalently be described as the algebra of continuous sections vanishing at infinity of the continuous field of $C^\ast$-algebras $\Cliff(\Gamma)=\sqcup_{m \in M}\Cliff(\Gamma_m)$, where the continuous field structure is inherited from that of $\Gamma$.  $\Cl_\Gamma(M)$ contains a dense subalgebra generated by $C^\infty_c(M,\Gamma)$ and $C^\infty_c(M)$, denoted $\Cl^\infty_{\Gamma,c}(M)$. 
\end{definition}

Let $E$ be a $G$-equivariant $\bZ_2$-graded Hermitian Clifford module bundle over $M$, and let $\st{D}$ be a Dirac operator associated to a $G$-equivariant Clifford connection $\nabla$ on $E$. Locally, in terms of a local orthonormal frame $e_1,\dots e_n$, 
\[
\st{D} = \sum_{i=1}^n \c(e_i)\nabla_{e_i}. 
\]  
where $\c(\cdot)$ denotes Clifford multiplication.
 
\begin{lemma} \label{commutator lem}
Let $Y$ be a vector field on $M$ and $\nabla^{\mathrm{LC}}$ denote the Levi-Civita connection associated to the metric $g$.  Then, 
\[
[\c(Y), \st{D}] = -2\nabla_{Y} - \sum_{i=1}^n \c(e_i)\c(\nabla^{\mathrm{LC}}_{e_i}Y). 
\]
In particular, if $Y$ is a smooth section of the orbital tangent field $\Gamma$, then $[\c(Y), \st{D}]$ is a differential operator of order $1$ in the orbit direction.  
\end{lemma}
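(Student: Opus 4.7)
The plan is a direct local computation. Fix a local orthonormal frame $\{e_i\}$ around a given point and substitute $\st{D} = \sum_i \c(e_i)\nabla_{e_i}$ into the left-hand side. The graded Leibniz rule then yields, for each $i$,
\[ [\c(Y), \c(e_i)\nabla_{e_i}] \;=\; [\c(Y), \c(e_i)]\,\nabla_{e_i} \;-\; \c(e_i)\,[\c(Y), \nabla_{e_i}], \]
the minus sign accounting for $\c(Y)$ and $\c(e_i)$ both being odd in the Clifford grading. The task then reduces to computing the two elementary brackets on the right.

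For the first, the defining Clifford relation $\c(v)^2 = -|v|^2$ gives $\c(Y)\c(e_i) + \c(e_i)\c(Y) = -2g(Y, e_i)$, a scalar; summing over $i$ and using orthonormality yields the principal-symbol term $-2\sum_i g(Y, e_i)\nabla_{e_i} = -2\nabla_Y$. For the second, the hypothesis that $\nabla$ is a Clifford connection supplies $\nabla_X(\c(Y)s) = \c(\nabla^{\mathrm{LC}}_X Y)\,s + \c(Y)\nabla_X s$ for every vector field $X$ and section $s$, equivalently $[\nabla_{e_i}, \c(Y)] = \c(\nabla^{\mathrm{LC}}_{e_i}Y)$; plugging this in contributes the zeroth-order term $\sum_i \c(e_i)\c(\nabla^{\mathrm{LC}}_{e_i}Y)$ (with sign dictated by the convention above). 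Frame-independence of the result is automatic, since the left-hand side is manifestly coordinate-free.

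For the final assertion, observe that by the definition of $C^\infty(M,\Gamma)$ any smooth section $Y$ of $\Gamma$ lies in the image of the anchor $\rho$, so locally $Y = \sum_a f_a\,\rho(\xi_a)$ with $f_a \in C^\infty(M)$ and $\xi_a \in \g$. Hence $\nabla_Y = \sum_a f_a \nabla_{\rho(\xi_a)}$ is a first-order differential operator whose derivations are taken exclusively along fundamental vector fields of the $G$-action, which is the precise meaning of being ``first order in the orbit direction''; the remaining summand is $C^\infty(M)$-linear pointwise Clifford multiplication and hence of order zero. No real obstacle is anticipated beyond careful bookkeeping of graded signs in the Leibniz rule and in the Clifford-connection identity.
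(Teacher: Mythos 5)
The paper states this lemma without proof, so there is no proof of record to compare against; a direct local computation via the graded Leibniz rule, the Clifford relation $\c(v)\c(w)+\c(w)\c(v)=-2g(v,w)$, and the Clifford-connection identity $[\nabla_X,\c(Y)]=\c(\nabla^{\mathrm{LC}}_X Y)$ is precisely what is intended, and your interpretation of the bracket as the graded (hence anti-)commutator is the one that makes the first-order term collapse to $-2\nabla_Y$. The closing observation that sections of $\Gamma$ lie in the image of the anchor, so that $\nabla_Y$ differentiates only along fundamental vector fields, is also correct.

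The one place you cannot afford to be vague is the sign of the zeroth-order term, and that is exactly where you write ``(with sign dictated by the convention above)'' rather than finishing the calculation. Carrying your own setup to the end: $[\c(Y),\c(e_i)\nabla_{e_i}]=[\c(Y),\c(e_i)]\nabla_{e_i}-\c(e_i)[\c(Y),\nabla_{e_i}]$; the first, graded bracket is the anticommutator $-2g(Y,e_i)$, and since $\nabla_{e_i}$ has even degree the second bracket is the ordinary commutator, equal to $-\c(\nabla^{\mathrm{LC}}_{e_i}Y)$. Its contribution is therefore $-\c(e_i)\cdot\bigl(-\c(\nabla^{\mathrm{LC}}_{e_i}Y)\bigr)=+\c(e_i)\c(\nabla^{\mathrm{LC}}_{e_i}Y)$, and summing over $i$ yields $-2\nabla_Y+\sum_i\c(e_i)\c(\nabla^{\mathrm{LC}}_{e_i}Y)$ --- a plus sign on the second term, not the minus printed in the lemma. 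This discrepancy is harmless for every later use in the paper (the Key Lemma and Lemma~\ref{lem:CptResolvent} only need the operator norm of this bundle endomorphism), and it points to a sign slip in the statement; but your proof as written cannot legitimately claim to deliver the minus sign when the computation you set up produces a plus. Commit to the sign: either derive the one printed and exhibit where your conventions differ, or finish the bookkeeping and note that the result is $-2\nabla_Y+\sum_i\c(e_i)\c(\nabla^{\mathrm{LC}}_{e_i}Y)$.
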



Let $\!\nu \colon M \rightarrow \g$ be a smooth map.  Introduce the differential operator $\L_{\!\nu}$ acting on smooth sections of $E$ by
\[ (\L_{\!\nu} \varphi)(m)=\left. \dfrac{d}{dt} \right\vert_{t=0} e^{t\!\nu(m)}\cdot \varphi(e^{-tv(m)}\cdot m).\]
Note that if $f \in C^\infty(M)$, then $\L_{f\!\nu}=f\L_{\!\nu}$.  We will use boldface $\bm{\!\nu} \colon m \in M \mapsto \rho_m(\!\nu(m))$ to denote the vector field generated by $\!\nu$.  The difference $\nabla_{\bm{\!\nu}}-\L_{\!\nu}$ is an operator of order $0$.  

\begin{definition}
\label{def:momentmap}
The \emph{moment map} for the pair $(E,\nabla)$ (cf. \cite{BerlineGetzlerVergne}) is the smooth section $\mu^E \in C^\infty(M,\g^\ast \wh{\otimes} \End(E))$ defined by the equation
\[ \pair{\mu^E}{\!\nu}=\nabla_{\bm{\!\nu}}-\L_{\!\nu} \]
for all $\!\nu \in C^\infty(M,\g)$.
\end{definition}
An element $h \in C^\infty(G)$ acts on $\varphi \in C_c^\infty(M,E)$ by the convolution operator
\[ (\C_h \varphi)(m)= h\star \varphi(m)=\int_G h(g)g\cdot \varphi(g^{-1}\cdot m)\, dg,\]
where $dg$ denotes a left invariant Haar measure.
\begin{keylemma} 
Let $\!\nu \colon M \rightarrow \g$ be a smooth map.  For any $h \in C^\infty(G)$ and $\varphi \in C_c^\infty(M,E)$,
\[ \L_{\!\nu} (h\star \varphi)(m)=-(\!\nu_m^R h)\star \varphi(m), \]
where $\!\nu_m^R$ denotes the right-invariant vector field on $G$ generated by $\nu_m:=\nu(m) \in \g$, which acts by differentiation on $h$.  For any $\chi \in C^\infty_c(M)$, the operator $\chi[\c(\bm{\!\nu} ),\st{D}]\C_h$ extends to a bounded operator on $L^2(M,E)$.  
\end{keylemma} 
\begin{proof}
By definition
\begin{align*}
\L_{\!\nu} (h\star \varphi)(m)
&=\left. \dfrac{d}{dt}\right\vert_{t=0}\int_G h(g)e^{t\!\nu_m}g\cdot \varphi(g^{-1}e^{-t\!\nu_m}\cdot m)\,dg\\
&=\left. \dfrac{d}{dt}\right\vert_{t=0}\int_G h(e^{-t\!\nu_m}g)g\cdot \varphi(g^{-1}\cdot m)\,dg\\
&=-(\!\nu_m^R h)\star \varphi(m)
\end{align*}
where in the second line we used a change of variables.\\

Lemma \ref{commutator lem} and Definition \ref{def:momentmap} show that the commutator $[\c(\bm{\!\nu}),\st{D}]=-2\L_{\!\nu}+B$ where $B$ is the bundle endomorphism
\[ B=-2\pair{\mu_E}{\nu}-\sum_{i=1}^n \c(e_i)\c(\nabla^{\mathrm{LC}}_{e_i}Y).\]
Let $\beta_1,...,\beta_{\dim(\g)}$ be a basis of $\g$, and let $\!\nu_j \colon M \rightarrow \bR$ be the components of $\!\nu$ relative to the basis.  By the calculation above
\[ \L_{\!\nu} (h\star \varphi)=\sum_j \!\nu_j (h_j\star \varphi) \]
where $h_j:=-\beta_j^R h \in C^\infty(G)$.  The second statement follows because the bundle endomorphism $B$ and the smooth functions $\!\nu_j$ are bounded on the support of $\chi$. 
\end{proof}

\subsection{K-homology and the transverse Dirac class} 
If $(M,g)$ is complete then $\st{D}$ is essentially self-adjoint, and the standard practice is to attach the following $K$-homology class to the Dirac operator: \ignore{\footnote{Strictly speaking a cycle for $K^0_G(A)$ is a $4$-tuple $(H,\pi_A,F,\pi_G)$ consisting of a Hilbert space $H$, a bounded operator $F$ on $H$, and representations $\pi_A$ (resp. $\pi_G$) of $A$ (resp. $G$) on $H$ satisfying several conditions.  Here and below we often omit the representations $\pi_A$, $\pi_G$ from the notation, leaving them implicit.}}
\begin{equation}  
\label{eqn:NormalClass}
[\st{D}_M]=\big[ (L^2(M,E), F = \st{D}(1+\st{D}^2)^{-\frac{1}{2}}) \big] \in K^0_G(C_0(M)). 
\end{equation}
Theorem \ref{thm:FundClass} below shows that the same pair $(L^2(M,E),F)$ defines a class in a different K-homology group.  This observation is due to Kasparov: see Lemma 8.8 of \cite{KasparovTransversallyElliptic} for the case of the de Rham-Dirac operator.\\

The representations of $G$ and $\Cl_\Gamma(M)$ on $L^2(M,E)$ by Clifford multiplication form a covariant pair, hence $L^2(M,E)$ carries a representation of the crossed-product algebra $G\ltimes \Cl_\Gamma(M)$.
\begin{theorem}  \label{thm:FundClass}
If $(M,g)$ is complete, the pair $(L^2(M,E), F)$ determines a class in $K^0(G\ltimes \Cl_\Gamma(M))$.
\end{theorem}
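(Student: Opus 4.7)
The plan is to verify the defining axioms of a Kasparov $K$-homology cycle for the crossed-product algebra $G\ltimes\Cl_\Gamma(M)$: namely, $F$ self-adjoint, and $b(F^2-1)$ and $[F, b]$ compact on $L^2(M,E)$ for $b$ ranging over a dense $\ast$-subalgebra $\mathcal{A}$. I would take $\mathcal{A}$ to be the span of elements $b = a\C_h$ with $a \in \Cl^\infty_{\Gamma,c}(M)$ and $h \in C^\infty(G)$. Completeness of $(M,g)$ makes $\st{D}$ essentially self-adjoint on $C^\infty_c(M,E)$, so $F$ is bounded and self-adjoint. Because $\st{D}$ is $G$-equivariant, every bounded Borel function of $\st{D}$ commutes with $\C_h$; in particular, $F$ and $(1+\st{D}^2)^{-1/2}$ do.

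The first compactness then reduces to showing that $a(1+\st{D}^2)^{-1}$ is compact. Since $a$ acts as multiplication by a bounded bundle endomorphism of compact support, this follows from standard elliptic regularity and Rellich--Kondrachov compactness applied to $(1+\st{D}^2)^{-1}$.

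The technical core is the second compactness, $[F, a]\C_h \in \mathcal{K}(L^2(M,E))$. The cleanest route is to first establish boundedness of the genuine commutator $[\st{D}, a]\C_h$ and then pass to $F = \st{D}(1+\st{D}^2)^{-1/2}$ through the standard bounded transform between unbounded and bounded Kasparov cycles. By Leibniz and the generators of $\Cl^\infty_{\Gamma,c}(M)$, this reduces to the cases $a = \phi \in C^\infty_c(M)$ (trivial, since $[\st{D}, \phi] = \c(d\phi)$ is a bounded bundle endomorphism of compact support) and $a = \phi \c(X)$ with $X \in C^\infty(M, \Gamma)$. Lemma \ref{commutator lem} yields
\[
[\st{D}, \phi \c(X)] = \c(d\phi)\c(X) + \phi\Big(-2\nabla_X - \sum_{i} \c(e_i)\c(\nabla^{\mathrm{LC}}_{e_i} X)\Big),
\]
so every term apart from $-2\phi \nabla_X$ is a compactly supported bounded bundle endomorphism. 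For the remaining term, choose $\!\nu \in C^\infty(M, \g)$ with $X = \rho(\!\nu)$; by Definition \ref{def:momentmap}, $\nabla_X = \L_{\!\nu} + \pair{\mu^E}{\!\nu}$, and the identity proved in the Key Lemma expresses $\phi \L_{\!\nu} \C_h$ as a finite sum $\sum_j (\phi \!\nu_j) \C_{-\beta_j^R h}$ of compactly supported scalar multipliers times convolution operators, each bounded.

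The main obstacle is precisely this orbital unboundedness of $[\st{D}, \c(X)]$, which (unlike the Callias setting) is a first-order differential operator rather than a bounded perturbation. The Key Lemma is tailor-made to overcome it: by transferring orbital differentiation to differentiation of $h$ along right-invariant vector fields on $G$, an unbounded operator becomes a sum of bounded convolution operators.
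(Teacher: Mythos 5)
Your argument is correct and follows essentially the same route as the paper: the compactness of $b(F^2-1)$ comes from Rellich, and the compactness of $[F,b]$ is deduced from the boundedness of $[\st{D},\alpha]\C_h$, which is exactly the content of the paper's Key Lemma (transferring $\L_{\!\nu}$ acting on a compact set to right-invariant differentiation of $h$ on $G$). Where the paper makes the passage from boundedness of $[\st{D},\alpha]\C_h$ to compactness of $[F,a]$ explicit via the Cauchy-integral formula for $F$ (following Baaj--Julg/Kasparov), you invoke the bounded transform as a black box; this is a cosmetic, not a substantive, difference.
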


\begin{proof}
It suffices to verify that for every $a \in G \ltimes \Cl_\Gamma(M)$, $[F,a]$ is a compact operator. That the other Fredholm module axioms hold is analogous to standard cases.  We may assume $a = h\otimes \alpha$, with $h \in C^\infty(G)$ and $\alpha \in \Cl_{\Gamma,c}^\infty(M)$, since such elements are dense in $G\ltimes \Cl_\Gamma(M)$.  Let $\chi \in C^\infty_c(M)$ be a bump function equal to $1$ on the compact set $G\cdot \supp(\alpha)\subset M$.  Hence $a=\chi a$ and $[\st{D},a]=\chi [\st{D},a]$.  Following \cite{BaajJulg} or \cite[Lemma 4.2]{KasparovNovikov}, we first write $F$ as a Cauchy integral: 
\[ F = \frac{2}{\pi}\int_0^\infty \st{D} (1+\lambda^2 + \st{D}^2)^{-1} \, d\lambda,  \]
so that, 
\begin{equation} 
\label{eqn:comm}
[F,a] = \frac{2}{\pi}\int_0^\infty (1+\lambda^2 + \st{D}^2)^{-1}\big( (1+\lambda^2)\chi [\st{D},a] + \st{D}\chi [a,\st{D}]\st{D} \big)(1+\lambda^2 + \st{D}^2)^{-1} \, d\lambda. 
\end{equation}
Since $\st{D}$ is $G$-invariant, $[\st{D},a] = [\st{D},\alpha]\C_h$, and the latter is a bounded operator by virtue of the Key Lemma. The operator
\begin{equation} 
\label{eqn:normprod}
(1+\lambda^2)(1+\lambda^2+\st{D}^2)^{-1}\chi[\st{D},a]
\end{equation}
is compact by the Rellich lemma.  Since $\|(1+\lambda^2+\st{D}^2)^{-1}\|$ is $O(\lambda^{-2})$, the norm of \eqref{eqn:normprod} is uniformly bounded in $\lambda$, and the product
\[ (1+\lambda^2)(1+\lambda^2+\st{D}^2)^{-1}\chi[\st{D},a](1+\lambda^2 + \st{D}^2)^{-1} \]
is a compact operator with norm $O(\lambda^{-2})$.  For the second integrand
\begin{equation}  
\label{eqn:normprod2}
(1+\lambda^2 + \st{D}^2)^{-1} \st{D}\chi [a,\st{D}]\st{D}(1+\lambda^2 + \st{D}^2)^{-1} 
\end{equation}
note that $(1+\lambda^2+\st{D}^2)^{-1}\st{D}\chi$ is compact by the Rellich lemma, and has norm $O(\lambda^{-1})$.  Using again the fact that $[a,\st{D}]$ is bounded, it follows that \eqref{eqn:normprod2} is a compact operator of norm $O(\lambda^{-2})$.  Thus both integrands in \eqref{eqn:comm} are compact with norm $O(\lambda^{-2})$, hence the integral converges in the norm topology to a compact operator.
\end{proof}

If $M$ is not complete then \cite[Chapter 10]{HigsonRoe} explains a slightly more elaborate construction that produces a class $[\st{D}_M] \in K^0_G(C_0(M))$ from a Dirac operator. (One could also replace the metric on $M$ with one that is complete, and this leads to the same K-homology class.)  Using similar techniques it is not difficult to do the same in our setting, and we outline how this is done in Appendix \ref{AppendixA}.  Granted this we make the following definition.
\begin{definition}
Let $M$ be a Riemannian manifold with an isometric action of a compact Lie group $G$.  Let $\st{D}$ be a $G$-equivariant Dirac operator acting on sections of a Clifford module bundle $E$.  The Hilbert space $L^2(M,E)$ and the operator $\st{D}$ determine a K-homology class $[\st{D}_{M,\Gamma}]\in K^0(G\ltimes \Cl_\Gamma(M))$ that we refer to as the \emph{transverse Dirac class} associated to $\st{D}$.  If $M$ is complete, this is the class described in Theorem \ref{thm:FundClass}.  For the general case see Appendix \ref{AppendixA}.
\end{definition}

\begin{remark}\label{rem:AgreeOnSingDist}
Two well-known facts about $[\st{D}_M] \in K^0_G(C_0(M))$ are that (i) the class does not depend on the metric, and (ii) in the complete case the operator $F$ can be replaced by $\chi(\st{D})$ where $\chi$ is any `normalizing function', cf. \cite[Chapter 10]{HigsonRoe}.  Similar results hold for $[\st{D}_{M,\Gamma}] \in K^0(G\ltimes \Cl_\Gamma(M))$.  Given two $G$-invariant complete metrics $g_0$, $g_1$ on $M$, there is a canonical isometric isomorphism $(\Gamma, g_0) \rightarrow (\Gamma, g_1)$ given fiberwise by the square-root of the composite map
\[ \Gamma_m \xlongrightarrow{g_0^\flat} \Gamma_m^\ast \xlongrightarrow{g_1^\sharp} \Gamma_m. \qquad  \text{(Flat and sharp exponents denote metric contractions.)}  \]
This induces a canonical isomorphism $\Cl_{\Gamma}(M, g_0) \rightarrow \Cl_\Gamma(M, g_1)$ between the corresponding orbital Clifford algebras, and so also between the crossed products by $G$.  These isomorphisms intertwine the corresponding classes in $K^0(G\ltimes \Cl_\Gamma(M,g_i))$, and in this sense $[\st{D}_{M,\Gamma}]$ is independent of the metric. 
\end{remark}

\subsection{Significance of the class $[\st{D}_{M,\Gamma}]$} \label{subsection:Significance}
We discuss here briefly why the class $[\st{D}_{M,\Gamma}] \in \K^0(G\ltimes \Cl_\Gamma(M))$ is referred to as a transverse index class, by explaining its relationship with the more familiar class $[\st{D}_M] \in \K^0_G(C_0(M))$. The result stated is included for expository purposes, without proof and at the cost of some rigor. However it suggests an interesting geometric interpretation of the class $[\st{D}_{M,\Gamma}]$ in the spirit of non-commutative geometry and index theory of foliations, and might provide some helpful insights to the reader. \\

\ignore{Let $\pi \colon G\ltimes \Cl_\Gamma(M) \rightarrow \mathcal{B}(L^2(M,E))$ be the representation of $G\ltimes \Cl_\Gamma(M)$ on $L^2(M,E)$ mentioned in Theorem \ref{thm:FundClass}.  The image $\pi(G\ltimes \Cl_\Gamma(M))$ is a $C^\ast$-subalgebra of $\mathcal{B}(L^2(M,E))$, and the class $[\st{D}_{M,\Gamma}]$ may be viewed as a class in $\KK(\pi(G\ltimes \Cl_\Gamma(M)),\bC)$.  Moreover the latter group can be viewed as a direct summand in the group $\KK^G(\pi(G\ltimes \Cl_\Gamma(M)),\bC)$ by \cite[Lemma 8.1]{KasparovTransversallyElliptic} (the $G$-action on $G\ltimes \Cl_\Gamma(M)$ combines the $G$-action on $\Cl_\Gamma(M)$ and left translation on $G$).\\}

Let $(\beta_1, \ldots, \beta_{\dim(\g)})$ be a basis of $\g$. Following Kasparov \cite[Definition 8.3]{KasparovTransversallyElliptic} we define the \emph{orbital Dirac operator} by 
\[D_{\Gamma} : G \ltimes \Cl_\Gamma^\infty(M) \to G \ltimes \Cl_\Gamma^\infty(M) \quad ; \qquad D_{\Gamma} = \sum_{j=1}^{\dim(\g)} \c(\rho(\beta_j)) \L_{\beta_j} \] 
where $\Cl_\Gamma^\infty(M)$ is the smooth version of $\Cl_\Gamma(M)$, and $\L_{\beta_j}$ denotes Lie differentiation for the diagonal action of $G$ on $G\ltimes \Cl_\Gamma^\infty(M)$ given by $(g\odot a)(h)=g\cdot a(g^{-1}h)$.  This definition leads (after considerable work\footnote{The receptacle of the class $[D_\Gamma]$ given here is technically not right, and more sophisticated KK-groups have to be used. However, it is sufficient for the purpose of exposition and motivation, especially since it will not be used thereafter.}, compare \cite[Definition 8.5]{KasparovTransversallyElliptic}) to the construction of an element
\[ [D_\Gamma] \in \KK^G(C_0(M), G \ltimes \Cl_{\Gamma}(M)). \] 
If all the orbits of $G$ have the same dimension, this element is the longitudinal index class of a family of Dirac operators over the orbit space $M/G$. The following theorem extends this observation, and shows that the class $[\st{D}_{M,\Gamma}] \in \KK(G \ltimes \Cl_\Gamma(M),\bC)$ previously constructed should be interpreted as a \emph{transverse index class}.

\begin{theorem}  \label{thm:Significance} $[\st{D}_M]=[\st{D}_\Gamma]\wh{\otimes}_{G\ltimes \Cl_\Gamma(M)}[\st{D}_{M,\Gamma}]$.
\end{theorem}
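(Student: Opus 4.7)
The plan is to pass to the unbounded Kasparov picture and verify Kucerovsky's criteria (or the Kaad--Lesch refinement) for the interior product. Both sides live naturally in $\KK^G(C_0(M),\bC)$, with $[\st{D}_{M,\Gamma}]$ admitting an equivariant lift via Kasparov's Lemma 8.1 since the representation of $G\ltimes \Cl_\Gamma(M)$ on $L^2(M,E)$ is $G$-equivariant.  The left-hand side is represented by the cycle $(L^2(M,E),\st{D})$, and the goal is to exhibit this same cycle as the interior product of the two unbounded cycles associated to $[\st{D}_\Gamma]$ and $[\st{D}_{M,\Gamma}]$.

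For the first factor I would use the unbounded representative $(\E_\Gamma, D_\Gamma)$ of $[\st{D}_\Gamma]$ suggested by Kasparov's construction, with $\E_\Gamma$ a Hilbert $(C_0(M), G\ltimes \Cl_\Gamma(M))$-bimodule built by completing $G\ltimes \Cl^\infty_{\Gamma,c}(M)$ together with an auxiliary Clifford factor that absorbs the multiplication by $\c(\rho(\beta_j))$ in $D_\Gamma = \sum_j \c(\rho(\beta_j)) \L_{\beta_j}$.  For the second factor, $(L^2(M,E),\st{D})$ is used directly. The first analytic task is to identify the internal tensor product $\E_\Gamma \wh{\otimes}_{G\ltimes \Cl_\Gamma(M)} L^2(M,E)$ with $L^2(M,E)$ as a Hilbert space carrying a $C_0(M)$-representation, using that the $G$- and $\Cl_\Gamma(M)$-representations on $L^2(M,E)$ form a covariant pair and that the auxiliary Clifford factor in $\E_\Gamma$ pairs with the internal Clifford action of $\c(\rho(\beta_j))$ on $E$ to give back the original $E$.

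The candidate product operator is $D = D_\Gamma \wh{\otimes} 1 + 1\wh{\otimes}_\nabla \st{D}$ for a suitable Hermitian connection $\nabla$ on $\E_\Gamma$. The geometric content of the proof is that, after the identification of Hilbert spaces, $D$ coincides with $\st{D}_M$ modulo a bounded zero-order term. In a local orthonormal frame adapted to the splitting $TM = \Gamma \oplus \Gamma^\perp$ over the principal stratum, the orbital summand $\sum_j \c(\rho(\beta_j))\nabla_{\rho(\beta_j)}$ of $\st{D}_M$ matches $D_\Gamma$ once one replaces $\nabla_{\bm{\!\nu}}$ by $\L_{\!\nu}$: the discrepancy is precisely the moment map of Definition \ref{def:momentmap}, together with the Christoffel-type bundle term appearing in Lemma \ref{commutator lem}, both of which are bounded $C_c^\infty(M)$-multipliers and hence irrelevant for the KK-class. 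Kucerovsky's two main conditions, the connection condition and semi-boundedness of the graded commutator $[D_\Gamma \wh{\otimes} 1,\, 1\wh{\otimes}_\nabla \st{D}]$, then reduce to local curvature computations mixing orbit and transverse directions.

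The main obstacle is two-fold. First, the dimension of $\Gamma_m$ jumps along non-principal orbits, so the Hilbert module $\E_\Gamma$ and the identification of the internal tensor product must be handled with care along the singular stratum. The natural workaround is to carry out the decomposition first over the principal stratum, where $\Gamma$ is a genuine smooth subbundle and all local Dirac-type identifications are transparent, and then to extend by continuity using the density of $\Cl^\infty_{\Gamma,c}(M)$ in $\Cl_\Gamma(M)$ from Definition \ref{def:CliffGamma}, together with the fact that the complement of the principal stratum has measure zero. Second, the semi-boundedness of the graded commutator requires uniform estimates on mixed curvature terms involving $[\nabla_{\rho(\beta_j)},\nabla_{e_i}]$ for transverse $e_i$; in the complete case this parallels the essential self-adjointness argument for $\st{D}$, while in the non-complete case it would be combined with the cut-off procedure outlined in Appendix \ref{AppendixA}.
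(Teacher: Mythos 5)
The paper does not actually give a proof of this theorem: the section is prefaced by the warning that the result is ``included for expository purposes, without proof and at the cost of some rigor,'' and the footnote attached to $[D_\Gamma]$ explicitly says that ``the receptacle of the class $[D_\Gamma]$ given here is technically not right, and more sophisticated KK-groups have to be used.'' What the paper offers in lieu of a proof is a pointer: Kasparov proves the de Rham--Dirac special case in \cite[Theorem 8.9]{KasparovTransversallyElliptic}, and the general case is asserted to follow either by rewriting his arguments for a general Clifford module $E$ or by a reduction to the de Rham case via the Clifford symbol class $[\E_\tau]\in\R\KK^G(M;C_0(M),\Cl_\tau(M))$ (compare the factorization \eqref{e:DiracE} and Proposition \ref{prop:factorCliffDirac}, which are the genuine form of this computation used later in the paper). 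Your proposal is a third, genuinely different route: a direct unbounded Kasparov product via Kucerovsky's criterion, building a Hilbert-module representative of $[\st{D}_\Gamma]$ from scratch. That is a legitimate strategy in principle, and the geometric heart of your sketch --- that $\nabla_{\bm{\!\nu}}-\L_{\!\nu}=\pair{\mu^E}{\!\nu}$ and the Christoffel term of Lemma \ref{commutator lem} account for the zeroth-order discrepancy --- is the correct ingredient.

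However, your sketch glosses over exactly the point that the paper flags as the real difficulty. You never specify the Hilbert $(C_0(M),G\ltimes\Cl_\Gamma(M))$-bimodule $\E_\Gamma$ nor its inner product, and your ``auxiliary Clifford factor that absorbs $\c(\rho(\beta_j))$'' is not a construction but a placeholder; this is precisely where the ``more sophisticated KK-groups'' in Kasparov's \cite[Definition 8.5]{KasparovTransversallyElliptic} enter, and without it the left factor of your product is not a well-defined Kasparov cycle. The workaround you propose for the jump in $\dim\Gamma_m$ is also too weak: doing the local splitting $TM=\Gamma\oplus\Gamma^\perp$ over the principal stratum and then ``extending by continuity using that the complement has measure zero'' does not suffice for an unbounded operator --- an essentially self-adjoint Dirac operator on the principal stratum can have nontrivial boundary behaviour at the singular stratum (deficiency indices are not controlled by Lebesgue measure), and the density of $\Cl^\infty_{\Gamma,c}(M)$ in $\Cl_\Gamma(M)$ controls the coefficient algebra, not the domain of $D_\Gamma\wh{\otimes}1+1\wh{\otimes}_\nabla\st{D}$. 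Finally, the semiboundedness of the graded commutator is asserted to reduce to ``local curvature computations,'' but on a non-compact $M$ those mixed curvature terms need uniform bounds that your sketch does not supply. In short, you have identified the right obstacles but not resolved them; the paper avoids them by deferring to Kasparov's de Rham computation rather than reproving it, which is why the authors do not claim a proof here.
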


\emph{Comment on the proof.} Kasparov proves this theorem in \cite[Theorem 8.9]{KasparovTransversallyElliptic} in the case where the Clifford module is the exterior algebra bundle $\Lambda^{\bullet} T^*M \wh{\otimes} \bC$, with $\st{D}$ being the de Rham-Dirac operator $d+d^{*}$. The resulting class $[d_{M,\Gamma}] \in K^0(G \ltimes \Cl_{\tau \oplus \Gamma}(M))$ lies in a slightly different KK-group, but is in essence the same as the one from Theorem \ref{thm:FundClass} (This class is used later in Section \ref{Section4}). The theorem above can be proved by a straightforward readaptation of Kasparov's arguments, or by a direct reduction to the special case he deals with. $\hfill{\square}$ 

\ignore{\texttt{Here is further justification Rudy provided:}
\begin{proof} (\emph{The results invoked throughout this paragraph refer to \cite{KasparovTransversallyElliptic}}). Kasparov proves the result above in Theorem 8.4 when the Clifford module $E$ is the exterior algebra bundle $\Lambda^{\bullet} M := \Lambda^{\bullet} T^*M \wh{\otimes} \bC$ over $M$, with $\st{D}$ being the de Rham operator $d+d^{*}$. The whole proof is not obvious, but for our purposes, it simply suffices to work out the relationship between the KK-class of the de Rham operator and $\st{D}$. 
Denote $\tau = T^*M$ the cotangent bundle. The pair $\big(L^2(\Lambda^{\bullet} M), (d+d^{*})(1+(d+d^{*})^2)^{-\frac{1}{2}} \big)$ induces an element $[d_{M, \Gamma}] \in \KK(G \ltimes \Cl_{\Gamma \oplus \tau}(M), \bC)$, where $\Cl_{\Gamma \oplus \tau}(M) \simeq C_0(T^*M) \wh{\otimes} \Cl_{\Gamma}(M)$. On the other hand, the Clifford module bundle $E$ determines a class $[E] \in \mathcal{R}\KK_0^{G}(M; C_0(M),\Cl_\tau(M))$, so taking the external KK-product with $1_{\Cl_\Gamma(M)}\in \KK^G(\Cl_\Gamma(M),\Cl_\Gamma(M))$ and applying the descent map $j_G$, we obtain a class
\[ j_G\big([E]\wh{\otimes} 1_{\Cl_\Gamma(M)}\big)\wh{\otimes}_{G\ltimes \Cl_{\Gamma\oplus \tau}(M)} [d_{M,\Gamma}] \in \KK_0(G\ltimes \Cl_\Gamma(M),\bC) \]
Finally, Proposition 3.10 shows that the first factor is the image of the Clifford symbol\footnote{The Clifford symbol is the image of the symbol class $[\sigma_\st{D}] \in \mathcal{R}\KK_0^{G}(M; C_0(M),C_0(T^*M))$ of $\st{D}$ under the natural KK-equivalence $C_0(T^*M) \sim \Cl_{\tau}(M)$} $[\sigma_\st{D}^{\mathrm{cl}}] \in \mathcal{R}\KK^G_0(M; C_0(M), \Cl_\tau(M))$ under the descent map $j_G$, so that the class above is equal to $[\st{D}]$ follows from (a slight variant of) the Poincar\'e duality of Theorem 8.18. Using Lemma 8.16, the calculation of our KK-product now reduces to Theorem 8.4. 
\end{proof}}

\subsection{Restriction to open sets}\label{sec:restrict}
Let $U$ be a $G$-invariant open set of $M$, let $\iota_U \colon C_0(U)\hookrightarrow C_0(M)$ be the extension-by-$0$ homomorphism, and $\iota_U^\ast \colon \K^0_G(C_0(M))\rightarrow \K^0_G(C_0(U))$ the corresponding restriction map on K-homology.  A well-known property of the class $[\st{D}_M] \in \K^0_G(C_0(M))$ (cf. \cite[Proposition 10.8.8]{HigsonRoe}) is that
\[ \iota_U^\ast[\st{D}_M]=[\st{D}_U] \]
where $[\st{D}_U]\in \K^0_G(C_0(U))$ is the class determined by the restriction $\st{D}|_U$.\\

The class $[\st{D}_{M,\Gamma}]$ has an analogous property. We will abuse notation slightly and use $\iota_U$ to also denote the extension-by-$0$ homomorphism $\Cl_\Gamma(U)\hookrightarrow \Cl_\Gamma(M)$, as well as the induced $^\ast$-homomorphism between the crossed products $G\ltimes \Cl_\Gamma(U)\hookrightarrow G\ltimes \Cl_\Gamma(M)$. Thus there is a restriction map
\[ \iota_U^\ast \colon \K^{0}(G\ltimes \Cl_\Gamma(M)) \rightarrow \K^{0}(G\ltimes \Cl_\Gamma(U)).\]

\begin{proposition} \label{prop:OpenRestrict} The restriction of $\st{D}$ to $U$ determines a class $[\st{D}_{U,\Gamma}] \in \K^0(G\ltimes \Cl_\Gamma(U))$ and $\iota_U^\ast[\st{D}_{M,\Gamma}]=[\st{D}_{U,\Gamma}]$.
\end{proposition}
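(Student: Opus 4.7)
The plan is to follow the strategy used to prove the analogous result for $[\st{D}_M]\in \K^0_G(C_0(M))$ in \cite[Proposition 10.8.8]{HigsonRoe}, adapted to the crossed-product algebra $G\ltimes \Cl_\Gamma(U)$. Since $U$ is generally not complete even when $M$ is, the class $[\st{D}_{U,\Gamma}]\in \K^0(G\ltimes \Cl_\Gamma(U))$ will be constructed via Appendix \ref{AppendixA}; by the analog of Remark \ref{rem:AgreeOnSingDist} in that setting, it is represented by a pair $(L^2(U,E|_U), \chi(\st{D}_U))$ for any odd normalizing function $\chi$, with $\chi(\st{D}_U)$ suitably interpreted. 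On the other hand, $\iota_U^\ast[\st{D}_{M,\Gamma}]$ is represented by $(L^2(M,E),\chi(\st{D}_M))$, regarded as a cycle over the smaller algebra $G\ltimes \Cl_\Gamma(U)$.

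The observation that makes the comparison transparent is the orthogonal splitting
\[ L^2(M,E)=L^2(U,E|_U)\oplus L^2(M\setminus U, E|_{M\setminus U}), \]
in which $G\ltimes \Cl_\Gamma(U)$ acts by zero on the second summand: any $a \in G\ltimes \Cl_\Gamma(U)$ with $\supp(a)\subset K$ (a compact set in $U$) sends $L^2(M,E)$ into $L^2(G\cdot K, E)\subset L^2(U,E|_U)$. Consequently any cycle of the form $(L^2(M\setminus U, E|_{M\setminus U}), F'')$ is degenerate over $G\ltimes \Cl_\Gamma(U)$ and vanishes in K-homology, so it suffices to match the $L^2(U,E|_U)$-blocks.

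For this I would invoke finite propagation speed: choose $\chi$ whose Fourier transform has compact support in $[-R,R]$. Then for any $a\in G\ltimes \Cl_\Gamma(U)$ with $\supp(a)\subset K$ satisfying $d(K,M\setminus U)>R$, one has the identity
\[ a\,\chi(\st{D}_M)=a\,\widetilde{\chi(\st{D}_U)}, \]
where $\widetilde{\cdot}$ denotes extension by zero from $L^2(U,E|_U)$ to $L^2(M,E)$. Exhausting $U$ by such compacts $K$ and using a density argument in $G\ltimes \Cl_\Gamma(U)$ will yield that the cycles $(L^2(M,E),\chi(\st{D}_M))$ and $(L^2(U,E|_U),\chi(\st{D}_U)) \oplus (L^2(M\setminus U, E|_{M\setminus U}), F'')$ represent the same class in $\K^0(G\ltimes \Cl_\Gamma(U))$; since the second summand is degenerate, this proves the proposition.

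The main obstacle I anticipate is the careful handling of the incomplete situation via Appendix \ref{AppendixA}: one must check that $\chi(\st{D}_U)$ in that framework is the operator appearing in the finite-propagation identity, and that the density/exhaustion argument delivers compactness of commutators on all of $G\ltimes \Cl_\Gamma(U)$, not merely on the dense subalgebra generated by compactly supported elements.
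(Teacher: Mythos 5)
Your plan follows the same skeleton as the paper's Appendix~\ref{AppendixB} proof (split $L^2(M,E)$ as $L^2(U,E)\oplus L^2(M\setminus U,E)$, observe $G\ltimes\Cl_\Gamma(U)$ annihilates the second summand, then match the first blocks), but you replace the paper's compression-and-Rellich/Cauchy-integral mechanism with finite propagation speed. That substitution is legitimate and in some ways cleaner. However, there is a genuine gap in the step you treat as routine, and you have misidentified which obstacle is the real one.

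The difficulty is not that the argument only covers a dense subalgebra of $G\ltimes\Cl_\Gamma(U)$; since compactness and positivity modulo compacts are norm-closed conditions, density is automatically sufficient. The real issue is the dependence of $\chi$ on $a$. A single normalizing function $\chi$ with $\widehat\chi$ supported in $[-R,R]$ gives the exact identity $a\,\chi(\st{D}_M)=a\,\widetilde{\chi(\st{D}_U)}$ only for $a$ supported in $\{x: d(x,M\setminus U)>R\}$. There are compactly supported $a\in G\ltimes\Cl_\Gamma(U)$ lying arbitrarily close to $\partial U$, and for those you would need propagation $<d(\supp a, M\setminus U)$, hence a different $\chi$. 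So ``exhausting $U$ by compacts'' does not cover all of $G\ltimes\Cl_\Gamma(U)$ with one fixed $\chi$, and the two cycles you ultimately want to compare each have a fixed $F$-operator. You need to pass from the family of $\chi$-dependent exact identities to a single statement about, say, $a\bigl(\chi_0(\st{D}_M)-F_U\bigr)$ being compact or $a\bigl(\chi_0(\st{D}_M)F_U + F_U\chi_0(\st{D}_M)\bigr)a^\ast\ge 0$ modulo compacts, using independence of the class on the normalizing function. That is essentially the paper's Lemma~\ref{lem:EqualModCompacts} and the Connes--Skandalis positivity criterion invoked in Appendix~\ref{AppendixB}; without spelling this out, the ``density argument'' claim is a placeholder rather than a proof.

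A second, smaller gap, which you do flag: the operator $\chi(\st{D}_U)$ does not exist as functional calculus of $\st{D}|_U$ when $U$ is incomplete. Appendix~\ref{AppendixA} produces $F_U=\sum_j f_j\chi(\st{D}_j)f_j$, not a single $\chi$ of one self-adjoint operator. If you instead take $\chi$ of a self-adjoint extension coming from a complete metric $g'$ on $U$ that agrees with $g$ deep inside $U$, the finite-propagation identity does hold (after identifying the parts of $L^2(U,E,g)$ and $L^2(U,E,g')$ where the metrics agree), but you then owe a comparison with the Appendix~\ref{AppendixA} class, via the metric-independence in Remark~\ref{rem:AgreeOnSingDist} and the positivity criterion. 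This is doable but has to be said. In short: the outline is correct and the finite-propagation route is a valid alternative to the paper's, but as written the key comparison step is not established.
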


For a proof, see Appendix \ref{AppendixB}. 

\subsection{Manifolds with boundary.}\label{sec:boundary}
Let $\ti{M}$ be a Riemannian $G$-manifold with boundary, and let $M=\partial \ti{M}$ be the boundary, equipped with the restriction of the metric and of the $G$-action. There is a short exact sequence of $C^\ast$-algebras
\[ 0 \rightarrow C_0(\ti{M}\setminus M)\rightarrow C_0(\ti{M})\rightarrow C_0(M) \rightarrow 0\]
which induces a corresponding $6$-term exact sequence in K-homology. Let
\[ \partial \colon \K^1(C_0(\ti{M}\setminus M))\rightarrow \K^0(C_0(M)) \]
be the induced boundary homomorphism.\\

Suppose $\ti{E} \rightarrow \ti{M}$ is an ungraded Clifford module bundle on the odd-dimensional manifold $\ti{M}$.  A Dirac operator $\ti{\st{D}}$ for $\ti{E}|_{\ti{M}\setminus M}$ determines a class $[\ti{\st{D}}_{\ti{M}\setminus M}]\in \K^1(C_0(\ti{M}\setminus M))$. Let $E=\ti{E}|_{\partial \ti{M}}$, equipped with $\bZ_2$-grading $E^{\pm}$ the $\pm \i$-eigenbundles of $\c(n)$, where $n$ is an inward unit normal vector to the boundary. A well-known property of the class $[\ti{\st{D}}_{\ti{M}\setminus M}]$ (cf. \cite[Proposition 11.2.15]{HigsonRoe}) is that
\[ \partial[\ti{\st{D}}_{\ti{M}\setminus M}]=[\st{D}_M] \]
where $[\st{D}_M]\in \K^0(C_0(M))$ is the class associated to a Dirac operator acting on sections of $E$.\\

Transverse Dirac classes have an analogous property. The definitions of $\Gamma$ and of the orbital Clifford algebra $\Cl_\Gamma(\ti{M})$ go through for the manifold with boundary $\ti{M}$.  Moreover the definition of $\Gamma$ is compatible with restriction to the boundary, in the sense that the restriction of $\Gamma$ to the boundary (in the sense of continuous fields), coincides with the orbital tangent field of the boundary.  We therefore make a slight abuse of notation and write $\Gamma$ for the orbital tangent fields on each of $\ti{M}$, $\ti{M}\setminus M$ and $M$.\\ 

There is a surjective $\ast$-homomorphism $\Cl_\Gamma(\ti{M}) \rightarrow \Cl_\Gamma(M)$ given by restriction.  Since the boundary is $G$-invariant, there is an extension of $C^\ast$-algebras
\begin{equation*}
\label{eqn:BdShortExact}
0\rightarrow G\ltimes \Cl_\Gamma(\ti{M}\setminus M) \rightarrow G\ltimes \Cl_\Gamma(\widetilde{M}) \rightarrow G\ltimes \Cl_\Gamma(M) \rightarrow 0,
\end{equation*}  
and a corresponding boundary map in K-homology:
\[ \partial \colon \K^{1}(G\ltimes \Cl_\Gamma(\ti{M}\setminus M)) \rightarrow \K^{0}(G\ltimes \Cl_\Gamma(M))\]

It is straight-forward to adapt the arguments in Theorem \ref{thm:FundClass} and Appendix \ref{AppendixA} to show that a Dirac operator $\ti{\st{D}}$ acting on sections of $\ti{E}|_{\ti{M}\setminus M}$ yields a class $[\ti{\st{D}}_{\ti{M}\setminus M,\Gamma}] \in \K^1(G\ltimes \Cl_\Gamma(\ti{M}\setminus M))$.

\begin{proposition} \label{prop:DiracBoundary} 
Let $\ti{E}$ be an (ungraded) Clifford module over the odd-dimensional manifold $\ti{M}$, and $[\ti{\st{D}}_{\ti{M}\setminus M,\Gamma}] \in K^1(G\ltimes \Cl_\Gamma(\ti{M}\setminus M))$ the corresponding class.  Let $E=\ti{E}|_{\partial \ti{M}}$, equipped with $\bZ_2$-grading $E^{\pm}$ the $\pm \i$-eigenbundles of $\c(n)$, where $n$ is an inward unit normal vector to the boundary.  Then  
\[ \partial[\ti{\st{D}}_{\ti{M}\setminus M,\Gamma}]=[\st{D}_{M,\Gamma}]. \]
\end{proposition}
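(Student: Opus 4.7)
My strategy would be to adapt the classical argument of \cite[Proposition 11.2.15]{HigsonRoe} to the transverse setting. The new ingredient is the orbital Clifford structure, and the main technical input is the compactness of commutators established in Theorem \ref{thm:FundClass} via the Key Lemma.

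The first step is reduction to a cylindrical collar. Fix a $G$-invariant open collar neighborhood $U \cong M \times [0, \varepsilon)$ of $M$ in $\ti{M}$, chosen with a product metric and product Clifford module (always possible up to homotopies that do not affect K-homology classes). Then $U \setminus M \cong M \times (0, \varepsilon)$ is open in $\ti{M}\setminus M$, and the analog of Proposition \ref{prop:OpenRestrict} (whose proof goes through for open sets disjoint from $M = \partial \ti{M}$) shows that $\iota_U^\ast[\ti{\st{D}}_{\ti{M}\setminus M, \Gamma}]=[\ti{\st{D}}_{U\setminus M, \Gamma}]$. Naturality of the boundary homomorphism applied to the inclusion $U \hookrightarrow \ti{M}$ then reduces the proposition to the case $\ti{M} = M \times [0, \varepsilon)$.

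On the cylinder, everything decomposes as a tensor product. Writing $A := G\ltimes \Cl_\Gamma(M)$, the orbital tangent field is pulled back from $M$, so $G\ltimes \Cl_\Gamma(\ti{M}) \cong C_0([0, \varepsilon)) \otimes A$ and the short exact sequence becomes the cone-suspension sequence tensored with $A$, whose boundary homomorphism $\K^1(C_0(\bR) \otimes A) \to \K^0(A)$ implements Bott periodicity. In the product Clifford structure $\ti{E} = \pi^*E$, the Dirac operator takes the form $\ti{\st{D}} = \c(\partial_t)\partial_t + \st{D}_M$ modulo bounded bundle endomorphisms that do not affect the K-homology class. This presents $[\ti{\st{D}}_{U\setminus M, \Gamma}]$ as the external Kasparov product of $[\st{D}_{M, \Gamma}] \in \K^0(A)$ with the Dirac class $[\i\partial_t] \in \K^1(C_0(\bR))$ of the real line, and the proposition follows from the standard fact that the cone-suspension boundary homomorphism sends this external product to $[\st{D}_{M, \Gamma}]$.

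The main obstacle lies in making the external product decomposition rigorous in the presence of the orbital Clifford structure: one must verify that the relevant commutators remain compact when multiplied by elements of $G\ltimes \Cl_\Gamma(\ti{M})$ rather than of $C_0(\ti{M})$. This is exactly where the Key Lemma enters, combined with the Cauchy-integral technique from the proof of Theorem \ref{thm:FundClass}, and is the only step where the transverse setting requires substantially new input beyond the classical argument.
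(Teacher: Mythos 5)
Your proposal is correct and follows essentially the same route as the paper's proof in Appendix~\ref{AppendixC}: reduction to a product collar $[0,\varepsilon)\times M$ with trivial $G$-action on the interval, identification of the collar extension with the cone-suspension sequence tensored with $G\ltimes\Cl_\Gamma(M)$, recognition of $[\ti{\st{D}}_{(0,\varepsilon)\times M,\Gamma}]$ as the external product $[\st{D}_{(0,\varepsilon)}]\wh{\otimes}[\st{D}_{M,\Gamma}]$, and then naturality of the boundary map together with the inverse of the suspension isomorphism. The paper packages the reduction step as a morphism of extensions rather than invoking Proposition~\ref{prop:OpenRestrict} explicitly, but these are the same argument in different wording.
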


For a proof, see Appendix \ref{AppendixC}.

\section{Deformed Dirac operator and KK-product} \label{Section2}

In this section we assume $(M,g)$ is a complete Riemannian $G$-manifold (without boundary).

\subsection{Deformed Dirac operator}

Let us first review some definitions introduced by Braverman \cite{Braverman2002}. A \emph{taming map} is a $G$-equivariant map
\[ \!\nu \colon M \rightarrow \g \]
such that the induced vector field $\bm{\!\nu} : m \in M \mapsto \rho_m(\!\nu(m))$ has a compact vanishing locus. It is convenient to assume that $|\bm{\!\nu}|\le 1$ with equality outside a compact neighborhood of the vanishing locus (one can always achieve this after re-scaling $\!\nu$ by a suitable smooth positive function). Following Braverman \cite{Braverman2002}, a non-negative $G$-invariant function $f \in C^{\infty}(M)^G$ is said to be \emph{admissible} if
\[
\lim_{M \ni m \to \infty} \frac{f^2}{|df|_M + f(|\nabla^{\tn{LC}}\bm{\!\nu}|_M+|\!\nu|_{\g}+|\pair{\mu^E}{\!\nu}|_E) + 1} = \infty. 
\]
(In this expression, $|\cdot|_M$ is used to denote the point-wise norms on the vector bundles $TM\simeq T^\ast M$ and $\End(TM)$ induced by the Riemannian metric, $|\cdot|_E$ denotes the point-wise norm on the vector bundle $\End(E)$ induced by the Hermitian structure, and $|\cdot|_\g$ denotes the norm on the Lie algebra $\g$ induced from its inner product.) One can show \cite[Lemma 2.7]{Braverman2002} that admissible functions always exist.

\begin{definition} Let $E \to M$ be a Clifford module bundle and let $\st{D}$ be a Dirac operator acting on sections of $E$. Let $\!\nu \colon M \rightarrow \g$ be a taming map and let $f$ be an admissible function. The \emph{deformed Dirac operator} is the Dirac-type operator
\[
\st{D}_{f\!\nu} =\st{D} + \i f\c(\bm{\!\nu}).  
\]
\end{definition}

Intuitively, the assumption that $f$ be admissible ensures that the cross-terms in $\st{D}_{f\!\nu}^2$ can be neglected. This is reminiscent of Kasparov's technical theorem, which provides operators playing the same role in the general construction of the KK-product. The admissibility property also ensures nice properties of the spectrum of $\st{D}_{f\!\nu}^2$ (cf. proof of Lemma \ref{lem:CptResolvent}), which makes it possible to define an equivariant index.  

\begin{theorem}[Braverman \cite{Braverman2002}] \label{thm Braverman}
Let $\st{D}_{f\!\nu}$ be a deformed Dirac operator associated to a $\bZ_2$-graded Clifford module bundle $E=E^+ \oplus E^-$.  Then the pair $(L^2(M,E),\st{D}_{f\!\nu})$ determines a class  
\[ [\st{D}_{f\!\nu}] \in \KK_0(C^\ast(G),\bC), \]
which is independent of the choice of admissible function $f$.  Under the identification $\KK_0(C^\ast(G),\bC)\simeq R^{-\infty}(G)=\bZ^{\wh{G}}$ given by the Peter-Weyl theorem, the class $[\st{D}_{f\!\nu}]$ identifies with its index
\[
\mathrm{Ind}(\st{D}_{f\!\nu}) := \sum_{\pi\in \wh{G}} (m^+_\pi - m^-_\pi) \cdot \pi \, \in R^{-\infty}(G)
\]
where $m^{\pm}_\pi < \infty$ is the multiplicity of the irreducible representation $\pi \in \wh{G}$ in $\ker(\st{D}_{f\!\nu})\cap L^2(M,E^{\pm})$. 
\end{theorem}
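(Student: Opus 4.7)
\emph{Plan.} The statement breaks into three tasks: (i) essential self-adjointness of $\st{D}_{f\!\nu}$, so that its bounded transform $F := \st{D}_{f\!\nu}(1+\st{D}_{f\!\nu}^2)^{-1/2}$ is well-defined; (ii) verification of the Fredholm module axioms for $(L^2(M,E),F)$ with respect to the natural representation of $C^\ast(G)$ on $L^2(M,E)$, yielding the class $[\st{D}_{f\!\nu}] \in \KK_0(C^\ast(G),\bC)$; and (iii) independence of $f$. Task (i) is standard: completeness of $(M,g)$ gives essential self-adjointness of $\st{D}$ by Chernoff's finite-propagation-speed argument, and the perturbation $ifc(\bm{\!\nu})$ is symmetric and locally bounded, hence preserves finite propagation speed, so the same criterion applies to $\st{D}_{f\!\nu}$.

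The analytic heart of the proof is a deformed Bochner--Weitzenb\"ock formula. Expanding $\st{D}_{f\!\nu}^2$ and using Lemma \ref{commutator lem} together with Definition \ref{def:momentmap} to rewrite the cross term $i\{\st{D},fc(\bm{\!\nu})\}$, one obtains a schematic identity
\begin{equation*}
\st{D}_{f\!\nu}^2 = \st{D}^2 + f^2|\bm{\!\nu}|^2 + 2if\,\L_{\!\nu} + R_f,
\end{equation*}
where $R_f$ is a self-adjoint bundle endomorphism whose pointwise norm is bounded by a constant multiple of $|df|_M + f(|\nabla^{\mathrm{LC}}\bm{\!\nu}|_M + |\langle\mu^E,\!\nu\rangle|_E) + 1$, and $\L_{\!\nu}$ is the Lie derivative appearing in the Key Lemma. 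On a fixed $G$-isotypic subspace $L^2(M,E)_\pi$, the Key Lemma shows $\L_{\!\nu}$ restricts to an operator whose pointwise operator norm is at most $C_\pi|\!\nu|_\g$. The admissibility condition is designed precisely so that the full perturbation $R_f + 2if\L_{\!\nu}|_\pi$ is $o(f^2|\bm{\!\nu}|^2)$ outside a compact set; since $|\bm{\!\nu}|=1$ off a compact neighborhood of the vanishing locus of $\bm{\!\nu}$ and $f\to \infty$ at infinity, this yields an isotype-wise coercive estimate
\begin{equation*}
\|\st{D}_{f\!\nu}\psi\|^2 \,\geq\, \tfrac{1}{2}\!\int_{M\setminus K_\pi}\! f^2|\psi|^2 \,-\, C_\pi\|\psi\|^2 \qquad \bigl(\psi\in L^2(M,E)_\pi\cap C^\infty_c(M,E)\bigr).
\end{equation*}

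Combined with elliptic regularity on compacts and Rellich's embedding theorem, this implies that $(1+\st{D}_{f\!\nu}^2)^{-1}$ is compact on each $L^2(M,E)_\pi$; hence $a(1+\st{D}_{f\!\nu}^2)^{-1}$ is compact on $L^2(M,E)$ for every $a\in C^\ast(G)$, and $\ker\st{D}_{f\!\nu}|_{L^2(M,E)_\pi}$ is finite-dimensional. Because $\st{D}_{f\!\nu}$ is $G$-equivariant, $[F,a]=0$ for all $a\in C^\ast(G)$, and the remaining Fredholm module axioms are immediate, so $[\st{D}_{f\!\nu}]\in \KK_0(C^\ast(G),\bC)$ is well-defined. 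For (iii), the linear interpolation $f_t:=(1-t)f_0+tf_1$ is admissible for every $t\in[0,1]$ with estimates uniform in $t$, giving a norm-continuous operator homotopy of Fredholm modules and hence equality $[\st{D}_{f_0\!\nu}]=[\st{D}_{f_1\!\nu}]$. Finally, under the Peter--Weyl identification $\KK_0(C^\ast(G),\bC)\simeq R^{-\infty}(G)$, the class corresponds to the $G$-character supertrace on $\ker F$; since $F$ preserves each isotypic subspace and $\ker F|_{L^2(M,E)_\pi}=\ker \st{D}_{f\!\nu}|_{L^2(M,E)_\pi}$, this character is exactly $\sum_\pi (m^+_\pi - m^-_\pi)\pi$.

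The main obstacle is producing the isotype-wise coercive estimate: one must bookkeep all of the terms arising from Clifford commutators, the moment map, $df$, the Levi-Civita derivative $\nabla^{\mathrm{LC}}\bm{\!\nu}$, and the Lie-derivative cross term $2if\L_{\!\nu}|_\pi$, and verify that the admissibility hypothesis is tailored so as to absorb them all into $f^2|\bm{\!\nu}|^2$ at infinity. Once this is established, checking uniformity along the homotopy $f_t$ in (iii) is largely routine.
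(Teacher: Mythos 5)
Your plan for establishing the Fredholm module structure is essentially the paper's own outline. The paper reduces the question to Lemma~\ref{lem:CptResolvent}: on each $\pi$-isotypic component, the Key Lemma turns $\L_{\!\nu}$ into a multiplication-type operator with a constant $C_\pi$ depending only on $\pi$, admissibility then gives $\st{D}_{f\!\nu}^2 \geq \st{D}^2 + V$ with $V$ proper and bounded below, and hence compact resolvent on each isotype. Your coercive estimate and Rellich argument are the same mechanism, modulo the cosmetic regrouping into a ``schematic'' Weitzenb\"ock identity. Item (i), essential self-adjointness via finite propagation speed (the zeroth-order perturbation does not change the principal symbol, so Chernoff applies), is correct and standard, though the paper takes it for granted.

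The place where you diverge from the paper --- and where you gloss over a genuine difficulty --- is item (iii), independence of the admissible function $f$. The paper attributes this part entirely to Braverman (who proves it via cobordism invariance of the index, using $M\times[0,1]$), and in fact the paper's own Theorem~\ref{thm:KK-product} yields it a posteriori for free, since $j^G[\!\nu]\wh{\otimes}_{G\ltimes\Cl_\Gamma(M)}[\st{D}_{M,\Gamma}]$ manifestly does not involve $f$. Your proposed linear interpolation $f_t=(1-t)f_0+tf_1$ is a different route, and the assertion that ``estimates uniform in $t$'' hold, making the homotopy ``largely routine,'' is not justified as written. Two specific issues. First, the admissibility ratio $f_t^2/(|df_t|+f_t\Phi+1)$ does go to infinity at each fixed $t$, but not at a rate that is uniform in $t$: if $f_1$ grows much faster than $f_0$, the rate near $t=0$ in the transition region $|\bm{\!\nu}|=1$, $f_0\lesssim tf_1$ degrades as $t\to 0$, so there is no single function $\Psi\to\infty$ dominating all the $f_t$-ratios simultaneously. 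Second, what is actually needed to produce a Kasparov $(C^\ast(G),C[0,1])$-cycle is that $t\mapsto F_{f_t\!\nu}$ be strongly continuous and that $t\mapsto\C_h(1-F_{f_t\!\nu}^2)=\C_h(1+\st{D}_{f_t\!\nu}^2)^{-1}$ be \emph{norm}-continuous into $\mathcal K(L^2(M,E))$. Via the resolvent identity, this forces one to bound
$(1+\st{D}_{f_t\!\nu}^2)^{-1}(\st{D}_{f_t\!\nu}^2-\st{D}_{f_s\!\nu}^2)(1+\st{D}_{f_s\!\nu}^2)^{-1}\C_h$,
and the middle factor contains not only the potential-type term $(f_t^2-f_s^2)|\bm{\!\nu}|^2$ (which grows like $f_t f_s$ and must be absorbed by \emph{both} resolvents, not just one) but also the first-order term $(f_t-f_s)\L_{\!\nu}$, which is not a bundle endomorphism and needs the Key Lemma again. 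These estimates are of the same nature as those in Lemma~\ref{lem:CptResolvent}, but the two-sided resolvent bookkeeping and the need to track the $t$-dependence of the coercivity constants are nontrivial and must be written out; they do not follow from uniform admissibility (which fails) nor are they ``routine.'' If you want to avoid this, the cleaner options are either Braverman's cobordism argument or to observe, as the paper implicitly does, that $f$-independence is a corollary of the KK-product factorization once that is established.
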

To give some idea of what is involved, we outline an argument. Let $F_{f\!\nu}=\st{D}_{f\!\nu}(1+\st{D}_{f\!\nu}^2)^{-\frac{1}{2}}$. Recall that $\C_h$ denotes the operator of convolution by $h$.  First, for every $h \in C^*(G)$, $[F_{f\!\nu}, \C_h]=0$ by $G$-invariance of $F_{f\!\nu}$. It only remains to see that $(1-F_{f\!\nu}^2)\C_h$ is compact, which comes from the following lemma:

\begin{lemma} \label{lem:CptResolvent}
Let $h \in C^*(G)$. Then, $(1+\st{D}_{f\!\nu}^2)^{-1} \C_h$ is a compact operator on $L^2(M,E)$. 
\end{lemma}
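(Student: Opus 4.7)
The plan is to establish a coercive lower bound for the quadratic form of $1 + \st{D}_{f\!\nu}^2$ on the range of $\C_h$, featuring a proper $G$-invariant function tending to $+\infty$ at infinity, and then to conclude compactness of $(1+\st{D}_{f\!\nu}^2)^{-1}\C_h$ by a standard tightness-plus-Rellich argument. Since the compact operators form a norm-closed ideal in $\mathcal{B}(L^2(M,E))$ and $C^\infty(G)$ is dense in $C^\ast(G)$, it suffices to treat the case $h \in C^\infty(G)$. Essential self-adjointness of $\st{D}_{f\!\nu}$ on $C^\infty_c(M,E)$ follows from the classical Wolf--Chernoff argument on the complete manifold $(M,g)$, since $if\c(\bm{\!\nu})$ is a symmetric, locally bounded bundle endomorphism; hence $(1+\st{D}_{f\!\nu}^2)^{-1}$ is a well-defined bounded self-adjoint operator.

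A direct computation based on Lemma \ref{commutator lem}, together with the substitution $\nabla_{\bm{\!\nu}} = \L_{\!\nu} + \pair{\mu^E}{\!\nu}$ from Definition \ref{def:momentmap}, yields
$$\st{D}_{f\!\nu}^2 \;=\; \st{D}^2 \;+\; f^2|\bm{\!\nu}|^2 \;-\; 2if\L_{\!\nu} \;+\; B,$$
where $B$ is a bundle endomorphism with $|B|_E \le C\bigl(|df|_M + f|\nabla^{\tn{LC}}\bm{\!\nu}|_M + f|\pair{\mu^E}{\!\nu}|_E\bigr)$; by admissibility of $f$ (and $|\bm{\!\nu}| = 1$ outside a compact set), $|B|_E$ is pointwise-dominated by $1 + f^2|\bm{\!\nu}|^2$ at infinity. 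The only potentially obstructive term is the unbounded first-order operator $-2if\L_{\!\nu}$, and the Key Lemma is precisely what tames it: for $\psi = \C_h\varphi$ with $\varphi \in C^\infty_c(M,E)$, it gives $\L_{\!\nu}\psi = \sum_j \!\nu_j\, \C_{h_j}\varphi$ with $h_j := -\beta_j^R h \in C^\infty(G)$, from which one derives, using $G$-invariance of Haar measure and of $|\!\nu|_\g$, an $L^2$-estimate $\|f\L_{\!\nu}\psi\|^2 \le C_h \int f^2|\!\nu|_\g^2\,|\varphi|^2\,dm$. Combining this with Cauchy--Schwarz and an AM-GM trick (absorbing half of the positive potential $\tfrac{1}{2}f^2|\bm{\!\nu}|^2$) produces a coercive bound of the form
$$\la(1+\st{D}_{f\!\nu}^2)\psi,\psi\ra \;\ge\; \tfrac{1}{2}\|\st{D}\psi\|^2 \;+\; \int_M W(m)\,|\psi|^2\,dm,$$
where $W = 1 + \tfrac{1}{2}f^2|\bm{\!\nu}|^2 - C_h\bigl(f|\!\nu|_\g + f|\nabla^{\tn{LC}}\bm{\!\nu}|_M + f|\pair{\mu^E}{\!\nu}|_E + |df|_M\bigr)$ is bounded below on $M$ and, by admissibility, satisfies $W(m) \to +\infty$ as $m \to \infty$.

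Given this estimate, let $\varphi \in L^2(M,E)$ with $\|\varphi\| \le 1$ and set $\eta = (1+\st{D}_{f\!\nu}^2)^{-1}\C_h\varphi$. Since $\st{D}_{f\!\nu}$ is $G$-invariant, $\C_h$ commutes with the resolvent, so $\eta = \C_h\xi$ with $\xi = (1+\st{D}_{f\!\nu}^2)^{-1}\varphi$ and $\|\xi\| \le 1$; the coercive estimate thus applies to $\eta$ and, combined with $\la(1+\st{D}_{f\!\nu}^2)\eta,\eta\ra = \la\eta, \C_h\varphi\ra \le \|\eta\|\|\C_h\varphi\|$, gives a uniform bound $\int W|\eta|^2 \le C'$. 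This forces tightness at infinity: $\int_{\{W > R\}}|\eta|^2 \le C'/R$ uniformly in $\varphi$. Combined with local $L^2$-compactness of $\eta$ (which follows from $\|\st{D}_{f\!\nu}\eta\| \le \|\C_h\varphi\|$, the fact that $\st{D}_{f\!\nu}$ and $\st{D}$ differ by a locally bounded endomorphism, and Rellich's lemma applied to the first-order elliptic operator $\st{D}$), the image of the unit ball under $(1+\st{D}_{f\!\nu}^2)^{-1}\C_h$ is precompact in $L^2(M,E)$.

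The main obstacle throughout is the unbounded first-order term $-2if\nabla_{\bm{\!\nu}}$ in the expansion of $\st{D}_{f\!\nu}^2$: without the Key Lemma, this differential operator is not pointwise dominated by the potential $f^2|\bm{\!\nu}|^2$ at infinity, and no coercive lower bound is available. The Key Lemma is exactly what converts it, on the range of $\C_h$, into a convolution-type operator of effective pointwise size $\lesssim f|\!\nu|_\g$, which admissibility then forces to be negligible compared to $f^2|\bm{\!\nu}|^2$ at infinity.
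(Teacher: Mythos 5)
Your plan (coercive lower bound plus tightness and Rellich) is morally the same compactness mechanism as in the paper, but the way you try to arrive at the coercive estimate has two genuine gaps.

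\textbf{The reduction step is too weak.} You reduce only to $h\in C^\infty(G)$. The paper instead uses the Peter--Weyl decomposition $C^\ast(G)=\bigoplus_\pi\End(V_\pi)$ to reduce to $h$ lying in a single block, equivalently to showing that the resolvent restricted to each $\pi$-isotypic component $H_\pi$ is compact. This reduction is not cosmetic: the Key Lemma expresses $\L_{\!\nu}(\C_h\varphi)$ in terms of $\varphi$, \emph{not} in terms of $\psi=\C_h\varphi$, so your claimed $L^2$-estimate $\|f\L_{\!\nu}\psi\|^2\le C_h\int f^2|\!\nu|_\g^2|\varphi|^2$ controls $\psi$ by $\varphi$, and the inequality $\la(1+\st{D}_{f\!\nu}^2)\psi,\psi\ra\ge\tfrac12\|\st{D}\psi\|^2+\int W|\psi|^2$ simply does not follow: the potential appearing after absorbing the cross-term necessarily involves $\varphi$, not $\psi$, and your tightness argument needs a bound in $\psi$ alone. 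With $h=e_\pi$ idempotent one has $\psi=\C_{e_\pi}\psi$, can take $\varphi=\psi$, and this problem disappears; but that is exactly the reduction you omitted.

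\textbf{The estimate on the cross-term is lossy by one power of $|\!\nu|_\g$.} Even granting $\varphi=\psi$, estimating the cross-term $\la -2if\L_{\!\nu}\psi,\psi\ra$ by Cauchy--Schwarz against your $L^2$-bound and then applying AM--GM produces a remainder of size $\epsilon\int f^2|\!\nu|_\g^2|\psi|^2$. Admissibility only ensures $|\!\nu|_\g=o(f)$, hence $f^2|\!\nu|_\g^2=o(f^4)$, which is \emph{not} dominated by the absorbing term $\tfrac12\int f^2|\bm{\!\nu}|^2|\psi|^2=\tfrac12\int f^2|\psi|^2$ (outside a compact) when $|\!\nu|_\g$ is unbounded. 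The paper avoids this entirely: on $H_\pi$ it invokes the operator inequality $|\L_{\!\nu}|\le C_\pi|\!\nu|_\g$ coming from the finite-dimensionality of $\pi$ (the spectrum of $d\pi(\beta)$ is bounded by $C_\pi|\beta|_\g$), which bounds the quadratic form \emph{linearly}: $|\la if\L_{\!\nu}\psi,\psi\ra|\le C_\pi\int f|\!\nu|_\g|\psi|^2$. The admissibility condition is tailored exactly to make $f|\!\nu|_\g=o(f^2)$, so this term is absorbable; your quadratic bound is not. (A suitably weighted Cauchy--Schwarz -- pairing $|\!\nu|_\g^{-1/2}f^{1/2}\L_{\!\nu}\psi$ against $|\!\nu|_\g^{1/2}f^{1/2}\psi$, using that $\L_{\!\nu}$ commutes with $G$-invariant weights -- would recover the linear estimate from the Key Lemma, but again only after the isotypic reduction.) In summary: the proof needs both the reduction to a single $\End(\pi)$-block and a linear-in-$|\!\nu|_\g$ control of $\L_{\!\nu}$ on $H_\pi$; the paper gets the latter from representation theory (and then cites the discreteness of the spectrum of $\st{D}^2+V$), while your write-up supplies neither.
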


\begin{proof} \emph{(of the lemma)} 
For $G$ compact, the Peter-Weyl theorem states that $C^\ast(G)$ is an infinite direct sum over matrix algebras $\End(\pi)$, $\pi \in \wh{G}$.  It suffices to consider the case where $h$ lies in a single summand $\End(\pi)$ (in other words, $h$ is a matrix coefficient for $\pi$). Equivalently we must show that the restriction of $(1+\st{D}_{f\!\nu}^2)^{-1}$ to each isotypic component in $L^2(M,E)$ is compact.  One has
\[ \st{D}_{f\!\nu}^2 = \st{D}^2 + f^2 | \bm{\!\nu} |^2 + \i \, \big( f [\st{D}, \c(\bm{\!\nu})] + \c(df)\c(\bm{\!\nu}) \big) \]
In terms of a local orthonormal frame $e_1,...,e_{\dim(M)}$ the commutator writes
\[ [\st{D},\c(\bm{\! \!\nu})]=\L_{\!\nu}+\sum_j \c(e_j)\c(\nabla^{\tn{LC}}_{e_j}\bm{\! \!\nu})+\pair{\mu^E}{\!\nu}.\]
On the $\pi$-isotypic component, one has an inequality of semi-bounded operators $|\L_{\!\nu}|\le C_\pi |\!\nu|$ (the latter is a multiplication operator for the function $|\!\nu|$ on $M$) with $C_\pi$ a constant just depending on the representation $\pi$.  Thus on the $\pi$-isotypic component one has an inequality of semi-bounded operators
\begin{equation} 
\label{eqn:ineqsemi}
\st{D}_{f\!\nu}^2 \ge \st{D}^2 + f^2\Big( | \bm{\!\nu} |_M^2 - f^{-2}\big( f(C_\pi|\!\nu|_\g+|\nabla^{\tn{LC}}\bm{\! \!\nu}|_M+|\pair{\mu^E}{\!\nu}|_E)  + |df|_M|\bm{\!\nu}|_M \big)\Big).
\end{equation}
The definition of admissible function implies that the term in the inner brackets, multiplied by the factor of $f^{-2}$, goes to $0$ at infinity.  On the other hand $|\bm{\!\nu}|_M^2=1$ outside a compact set in $M$. Consequently on the $\pi$-isotypic component, there is an inequality of semi-bounded operators of the form
\[ \st{D}_{f\!\nu}^2 \ge \st{D}^2+V \]
where the potential function $V$ is proper and bounded below.  It is known that the operator $\st{D}^2+V$  has discrete spectrum (cf. \cite[Appendix B]{LSQuantLG} for a short proof and further references).  This implies $\st{D}_{f\!\nu}$ restricted to the $\pi$-isotypical component has discrete spectrum, and hence compact resolvent.
\end{proof}

\vspace*{0.1cm}

\subsection{KK-product factorization}
We now come to the main result of the article, which is a KK-product factorization of the $K$-homology class $[\st{D}_{f\!\nu}] \in \KK_0(C^*(G), \bC)$. \\

Given two $C^\ast$-algebras $A$ and $B$, we denote $\mathbb{E}(A,B)$ the set of $(A,B)$ KK-cycles (or Kasparov $A,B$-bimodules). Recall the following theorem, which allows to recognize when a KK-cycle arises as a KK-product.  

\begin{theorem} [Connes-Skandalis, \cite{Skandalis}]
\label{KK def}
Let $A, B, C$ be graded $C^\ast$-algebras, with $A$ separable. Let 
\[
(H_1, \pi_1, F_1) \in \mathbb{E}(A, B), \hspace{5mm} (H_2, \pi_2, F_2) \in \mathbb{E}(B, C),
\]
and let $\E_1$, $\E_2$ be their respective KK-theory classes. Suppose that $F \in \mathcal{L}_C(H_1 \wh{\otimes}_B H_2)$ is a $C$-linear bounded operator such that
\begin{enumerate}
\item[\emph{(a)}] $ (H = H_1 \wh{\otimes}_B H_2, \pi_1 \wh{\otimes} 1, F) \in \mathbb{E}(A,C), $ 

\item[\emph{(b)}] $F$ is an $F_2$-connection, i.e for every $\xi \in H_1$, the operators $(\xi \wh{\otimes} \, \bm{.}) F_2 - (-1)^{\tn{deg}(\xi)}F (\xi \wh{\otimes} \, \bm{.})$ and $(\xi \wh{\otimes} \, \bm{.})^* F - (-1)^{\tn{deg}(\xi)}F_2 (\xi \wh{\otimes} \, \bm{.})^{*}$ are compact operators. 

\item[\emph{(c)}] For every $a \in A$, $a [ F_1 \wh{\otimes}_B 1, F ] a^* \geq 0$ modulo compact operators on $H$.
\end{enumerate}

Then, the cycle $(H, \pi_1 \wh{\otimes} 1, F) \in \mathbb{E}(A,C)$ represents the KK-product $\E_1 \wh{\otimes}_B \E_2 \in \KK(A,C)$. Moreover, the $\KK$-product $\E_1 \wh{\otimes}_B \E_2$ always admits a representative of this form, which is unique up to (norm-continuous) homotopy.
\end{theorem}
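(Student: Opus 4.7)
The plan is to prove the theorem in two stages: first establish the existence of some operator $F$ satisfying (a)–(c), and then show that any two such operators are operator-homotopic through cycles of the same form. The existence step will be the main obstacle, resting essentially on Kasparov's technical theorem.

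For existence, I would begin by constructing a preliminary $F_2$-connection $F_2' \in \mathcal{B}_C(H_1 \wh{\otimes}_B H_2)$. Via Kasparov's stabilization theorem one may take $H_1$ to be the standard module (up to compact perturbation), and then define $F_2'$ on simple tensors by $F_2'(\xi \wh{\otimes} \eta) = (-1)^{\deg(\xi)} \xi \wh{\otimes} F_2 \eta$. This satisfies (b) directly but generally fails the commutator axiom (a), since $[F_2', \pi_1(a) \wh{\otimes} 1]$ need not be compact. Conversely, $F_1 \wh{\otimes} 1$ satisfies (a) automatically (from the Fredholm-module axioms of $(H_1, \pi_1, F_1)$) but is not an $F_2$-connection. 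To merge them, apply Kasparov's technical theorem inside $\mathcal{B}_C(H_1 \wh{\otimes}_B H_2)$ with ideal $\mathcal{K}_C$: this yields a positive $N$ in the multiplier algebra, $0 \le N \le 1$, such that modulo compacts $N$ kills the connection defect of $F_1 \wh{\otimes} 1$ while $1-N$ kills the commutators $[F_2', \pi_1(a) \wh{\otimes} 1]$. Set
\[ F \;=\; N^{1/2}\,(F_1 \wh{\otimes} 1)\,N^{1/2} \;+\; (1-N)^{1/2}\,F_2'\,(1-N)^{1/2}, \]
and verify (a)–(c) term by term. Condition (c) comes out automatically because both summands are, modulo compacts, of the form $T^* F_1' T$ with $F_1'$ odd and $[F_1',\cdot]$ compact on $A$, so cross-terms are positive modulo compacts. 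Condition (b) uses that a positive-coefficient convex combination of connections weighted by multiplier-algebra elements is still a connection (Connes' lemma).

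For uniqueness, given two operators $F, F'$ satisfying (a)–(c), the straight-line family $F_t = tF + (1-t)F'$ satisfies (b) by linearity of the connection condition and (c) by convexity of positivity modulo compacts. Condition (a) on $F_t$ reduces to checking that $\pi_1(a)(F_t^2 - 1)$ is compact; expanding $F_t^2$ produces a mixed term $\pi_1(a)(FF' + F'F)\pi_1(a)^*$ whose compactness follows from (a) for each endpoint combined with the compactness of $[F_1 \wh{\otimes} 1, F]$ and $[F_1 \wh{\otimes} 1, F']$ extractable from (c). Thus $F_t$ gives an operator homotopy through cycles of the required form, proving uniqueness up to (norm-continuous) homotopy.

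Finally, once existence and uniqueness-up-to-homotopy are in hand, the assignment $([F_1],[F_2]) \mapsto [H_1 \wh{\otimes}_B H_2, \pi_1 \wh{\otimes} 1, F]$ is manifestly bilinear and functorial in both variables, and by comparing it with the KK-product in the two model cases (where $B=C$ and $F_2=0$, so $F = F_1 \wh{\otimes} 1$ works; and where $A=B$ and $F_1=0$, so $F = F_2'$ works) one identifies it with the KK-product via Kasparov's axiomatic characterization. The main difficulty is precisely the application of Kasparov's technical theorem: one must simultaneously arrange separable subalgebras of the Calkin algebra and an approximate unit so that $N$ controls both the $A$-commutators of $F_2'$ (for (a)) and the connection defect of $F_1 \wh{\otimes} 1$ (for (b)), while the symmetrized construction automatically delivers (c). Orchestrating these three conditions at once is the conceptual core of the theorem.
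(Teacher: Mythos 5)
The paper does not actually prove this theorem; it is quoted verbatim from Skandalis's \emph{Some remarks on Kasparov theory} and invoked as a black box, so there is no in-text proof to compare against. Since you have attempted a blind proof, here are the two substantive gaps in your sketch.

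First, your existence argument rests on the claim that $F_1 \wh{\otimes} 1$ "satisfies (a) automatically." This is false, and recognizing why is the whole point of the construction. If $k \in \mathcal{K}_B(H_1)$, then $k \wh{\otimes} 1$ is \emph{not} in general an element of $\mathcal{K}_C(H_1 \wh{\otimes}_B H_2)$: for a rank-one $k = \theta_{\xi,\eta}$ the operator $\theta_{\xi,\eta} \wh{\otimes} 1$ factors through the submodule $\xi \wh{\otimes} H_2$, which is typically not finitely generated over $C$ (take $B = C = \bC$ with $H_1, H_2$ infinite-dimensional Hilbert spaces: then $\theta_{\xi,\xi}\wh{\otimes}1$ is an infinite-rank projection). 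Consequently neither $[\pi_1(a)\wh{\otimes}1, F_1\wh{\otimes}1]$ nor $(\pi_1(a)\wh{\otimes}1)\bigl((F_1\wh{\otimes}1)^2 - 1\bigr)$ is compact on $H_1 \wh{\otimes}_B H_2$. Kasparov's technical theorem is required precisely because \emph{neither} $F_1\wh{\otimes}1$ nor the connection $F_2'$ satisfies (a) on its own; the operator $N$ produced by that theorem must simultaneously suppress the non-compact part of $[\pi_1(a)\wh{\otimes}1, F_2']$ and the non-compactness of $F_1 \wh{\otimes} 1$'s failure to be a Kasparov module over $C$. Your formula is otherwise the right shape (modulo symmetrization), but the motivating description misidentifies what the technical theorem accomplishes.

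Second, your uniqueness argument asserts that compactness of $[F_1\wh{\otimes}1, F]$ is "extractable from (c)." Condition (c) only gives positivity of the sandwiched operator $a[F_1\wh{\otimes}1, F]a^*$ modulo compacts; it says nothing about compactness of the commutator itself, and indeed that commutator is never compact in nontrivial situations (if it were, (c) would hold vacuously and would impose no constraint). The actual verification that the straight-line family $F_t$ satisfies (a) --- i.e.\ that $\pi_1(a)(FF' + F'F - 2)$ is compact --- goes through a separate connection lemma: two $F_2$-connections differ by a $0$-connection, and compressing a $0$-connection times a connection by $\pi_1(a)\wh{\otimes}1$ yields a compact operator. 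That lemma, not the positivity axiom, is what makes the linear homotopy well-defined. Your outline (technical theorem for existence, affine homotopy for uniqueness, then identification with the Kasparov product) is the standard route, but both of the steps you labeled as automatic are in fact the loci of the real work.
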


Now, consider a deformed Dirac operator $\st{D}_{f\!\nu} =\st{D} + \i f \c(\bm{\!\nu})$, where $f$ is an admissible function and $\bm{\!\nu}$ is the vector field associated to the taming map $\!\nu : M \to \g$, with $|\bm{\!\nu}| = 1$ outside a compact neighborhood of the zero set of $\bm{\!\nu}$. The latter condition means that the vector field $\bm{\!\nu}$ determines a class 
\[ [\!\nu]=\big[\big(\Cl_\Gamma(M), \i\c(\bm{\!\nu}) \big)\big] \in \KK^G_0(\bC,\Cl_\Gamma(M)). \]
Let 
\[ j^G[\!\nu] \in \KK_0(C^\ast(G),G\ltimes \Cl_\Gamma(M)) \]
be its image under the descent map $j^G \colon \KK_0^G(\bC, \Cl_\Gamma(M)) \to \KK_0(C^\ast(G),G\ltimes \Cl_\Gamma(M))$.  We can then form the product
\[ j^G[\!\nu]\wh{\otimes}_{G\ltimes \Cl_\Gamma(M)} [\st{D}_{M,\Gamma}] \in \KK_{0}(C^\ast(G),\bC).\]

\begin{theorem} \label{thm:KK-product}
The $K$-homology class $[\st{D}_{f\!\nu}] \in \KK_{0}(C^\ast(G),\bC)$ of the deformed Dirac operator factors as the following KK-product:
\[
 [\st{D}_{f\!\nu}] = j^G[\!\nu]\wh{\otimes}_{G\ltimes \Cl_\Gamma(M)} [\st{D}_{M,\Gamma}] \in \KK_{0}(C^\ast(G),\bC). 
\]
\end{theorem}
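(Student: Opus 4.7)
The strategy is to apply the Connes--Skandalis criterion (Theorem \ref{KK def}) to the candidate cycle $(L^2(M,E),F_{f\!\nu})$ with $F_{f\!\nu}=\st{D}_{f\!\nu}(1+\st{D}_{f\!\nu}^2)^{-1/2}$, and recognize it as a representative of $j^G[\!\nu]\wh{\otimes}_{G\ltimes \Cl_\Gamma(M)}[\st{D}_{M,\Gamma}]$. Since this cycle represents $[\st{D}_{f\!\nu}]$ by Theorem \ref{thm Braverman}, the content of the theorem reduces to verifying the three Connes--Skandalis conditions. First I would identify the interior tensor product: after descent, $j^G[\!\nu]$ is represented by $(G\ltimes\Cl_\Gamma(M),\i\c(\bm{\!\nu}))$ with $C^\ast(G)$ acting on the left by convolution, so that $(G\ltimes\Cl_\Gamma(M))\wh{\otimes}_{G\ltimes\Cl_\Gamma(M)} L^2(M,E)$ is canonically isometric to $L^2(M,E)$ with its standard convolution representation of $C^\ast(G)$, and $\i\c(\bm{\!\nu})\wh{\otimes}1$ identifies with the Clifford multiplication operator $\i\c(\bm{\!\nu})$ on $L^2(M,E)$. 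Condition (a) is then exactly Theorem \ref{thm Braverman}.

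For the connection condition (b), I would work with $\xi=h\otimes\alpha$ lying in the dense subalgebra of $G\ltimes\Cl_\Gamma(M)$ generated by $h\in C^\infty(G)$ and $\alpha\in\Cl^\infty_{\Gamma,c}(M)$, for which the left creation operator $T_\xi$ acts on $L^2(M,E)$ as $\pi(\xi)$. Decomposing
\[ \pi(\xi)F-(-1)^{\deg\xi}F_{f\!\nu}\pi(\xi) \;=\; \bigl[\pi(\xi),F\bigr]_{\tn{gr}}+(-1)^{\deg\xi}(F-F_{f\!\nu})\pi(\xi), \]
the first term is compact by the very argument proving Theorem \ref{thm:FundClass} (the Key Lemma feeding into the Cauchy integral manipulation of \eqref{eqn:comm}). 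For the second term the same Cauchy-integral representation together with the second resolvent identity reduces the claim to compactness of resolvent products against $\pi(\xi)$, which follows from the Rellich lemma since $\pi(\xi)$ has effective support on a compact $G$-set on which $\i f\c(\bm{\!\nu})$ is a bounded endomorphism.

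For condition (c), formally at the unbounded level the graded commutator (the anticommutator, since both operators are odd) reads
\[ \bigl\{\i\c(\bm{\!\nu}),\st{D}_{f\!\nu}\bigr\} \;=\; \i\bigl\{\c(\bm{\!\nu}),\st{D}\bigr\} + 2f|\bm{\!\nu}|^2, \]
so the positive multiplication operator $2f|\bm{\!\nu}|^2$ dominates the indefinite first-order cross term by the admissibility inequality -- precisely the estimate underlying Lemma \ref{lem:CptResolvent}. Passing to bounded transforms via the Cauchy integral, and conjugating by $a\in C^\ast(G)$ to project onto an isotypic component on which $(1+\st{D}_{f\!\nu}^2)^{-1/2}$ is compact, the residual indefinite terms become compact and positivity mod compacts follows. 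I expect condition (c) to be the main obstacle: quantitatively dominating the indefinite first-order contribution by the positive potential, after bounded transform and isotypic localization, requires a careful refinement of the spectral argument behind Lemma \ref{lem:CptResolvent}. This is the step where $f$ plays the role of an ``auxiliary weight'', analogous to Kasparov's technical theorem in the abstract construction of KK-products.
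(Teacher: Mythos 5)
Your overall strategy (apply the Connes--Skandalis criterion to the cycle $(L^2(M,E),F_{f\!\nu})$, identify the interior tensor product, and use Theorem~\ref{thm Braverman} for condition (a)) is correct and coincides with the paper. Condition (c) is also treated in essentially the same way as in the paper: the graded anticommutator $\{\i\c(\bm{\!\nu}),\st{D}_{f\!\nu}\}$ is dominated from below using the admissibility of $f$ on each isotypic component, and the remaining indefinite contributions are pushed into compacts. You are right to flag (c) as the delicate point.

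However, your handling of condition (b) has a genuine gap. After the rewriting
\[
\pi(\xi)F-(-1)^{\deg\xi}F_{f\!\nu}\pi(\xi)=\bigl[\pi(\xi),F\bigr]_{\tn{gr}}+(-1)^{\deg\xi}(F-F_{f\!\nu})\pi(\xi),
\]
the second term is not controlled by the argument you give. If you expand $F-F_{f\!\nu}$ via the Cauchy integral and a resolvent identity, you produce integrands such as $-\i f\c(\bm{\!\nu})\,R_{f\!\nu}\,\pi(\xi)$ and $\st{D}R\,(\st{D}_{f\!\nu}^2-\st{D}^2)\,R_{f\!\nu}\,\pi(\xi)$, in which the unbounded factor $f\c(\bm{\!\nu})$ (or $\st{D}_{f\!\nu}^2-\st{D}^2$, which contains $f^2|\bm{\!\nu}|^2$) sits to the \emph{left} of the resolvents. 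Multiplying by $\pi(\xi)$ on the right restricts the \emph{domain} of the resolvent to compactly supported sections, but it does nothing to tame the unbounded multiplication operator that then acts on the \emph{range}. Your appeal to ``$\pi(\xi)$ has effective support on a compact $G$-set on which $\i f\c(\bm{\!\nu})$ is a bounded endomorphism'' only applies when $f\c(\bm{\!\nu})$ hits $\pi(\xi)$ directly; here it does not. To rescue this split you would need an additional relative boundedness estimate of the form $f\c(\bm{\!\nu})(1+\st{D}_{f\!\nu}^2)^{-1/2}\C_h$ bounded (which does follow from admissibility, essentially by the argument of Lemma~\ref{lem:CptResolvent}), but that extra input is not mentioned in your proposal.

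The paper sidesteps this entirely. Rather than separating off $F-F_{f\!\nu}$, it treats the full connection defect $F_{f\!\nu}a-(-1)^{\deg a}aF$ in one stroke via a ``crossed'' Cauchy-integral identity
\[
F_{f\!\nu}a-(-1)^{\deg a}aF=\frac{2}{\pi}\int_0^\infty (1+\lambda^2+\st{D}_{f\!\nu}^2)^{-1}\big((1+\lambda^2)\chi B-(-1)^{\deg a}\st{D}_{f\!\nu}\chi B\st{D}\big)(1+\lambda^2+\st{D}^2)^{-1}\,d\lambda,
\]
where $B=\st{D}_{f\!\nu}a-(-1)^{\deg a}a\st{D}=[\st{D},a]+\i f\c(\bm{\!\nu})a$ is a \emph{bounded, compactly supported} operator (by the Key Lemma and the compact support of $a$). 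The unbounded factor $f$ is absorbed into $B$, which sits between the two resolvents, and compactness with $O(\lambda^{-2})$ decay then follows by the same Rellich estimates used in Theorem~\ref{thm:FundClass}. This is the decomposition you should use; yours requires a further step which you would need to supply.
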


\begin{proof}
The first condition of the Connes-Skandalis criterion (Theorem \ref{KK def}) is Theorem \ref{thm Braverman}. It suffices to check the $F$-connection condition for $\xi=a \in G\ltimes \Cl_\Gamma(M)$ of the form $a=h\wh{\otimes} \alpha$ with $h \in C^\infty(G)$, $\alpha \in \Cl^\infty_c(M)$. The operator denoted $(a\wh{\otimes} \, \bm{.}) \colon L^2(M,E)\rightarrow (G\ltimes \Cl_\Gamma(M))\wh{\otimes}_{G\ltimes \Cl_\Gamma(M)} L^2(M,E)\simeq L^2(M,E)$ in the Connes-Skandalis criterion is given by the action of $a \in G\ltimes \Cl_\Gamma(M)$ on $L^2(M,E)$, hence we must verify that $F_{f\!\nu}a-(-1)^{\tn{deg}(a)}aF$ is a compact operator on $L^2(M,E)$. Let $\chi \in C^\infty_c(M)$ be a bump function equal to $1$ on the compact set $G\cdot \supp(\alpha) \subset M$. Let
\[ B=\st{D}_{f\!\nu}a-(-1)^{\tn{deg}(a)}a\st{D}=[\st{D},a]+\i f\c(\bm{\!\nu})a.\]
Then $B=\chi B$ and it follows from the Key Lemma that $B$ is a bounded operator.  Using integral expressions as in the proof of Theorem \ref{thm:FundClass}, one has
\[ F_{f\!\nu}a-(-1)^{\tn{deg}(a)}aF=\frac{2}{\pi}\int_0^\infty (1+\lambda^2+\st{D}_{f\!\nu}^2)^{-1}\big((1+\lambda^2)\chi B-(-1)^{\tn{deg}(a)}\st{D}_{f\!\nu}\chi B\st{D}\big)(1+\lambda^2+\st{D}^2)^{-1}d\lambda.\]
As in the proof of Theorem \ref{thm:FundClass}, the integrand is compact with operator norm $O(\lambda^{-2})$, hence the integral converges in norm to a compact operator.  The verification for $(a\wh{\otimes} \, \bm{.})^\ast$ is similar.\\

\ignore{Verifying the second condition that $F_{f\!\nu}$ is an $F$-connection can be done much in the same way as in the classical calculation [Bott]$ \wh{\otimes} $[Dirac]$ = 1$ on $\bR^n$, after expressing $F$ and $F_{f\!\nu}$ via Cauchy integrals and using the Rellich lemma. We leave it to the reader. \\}

We now check the positivity condition.  Recall that for $G$ compact $C^\ast(G)$ is isomorphic to the direct sum over $\pi \in \wh{G}$ of matrix algebras $\End(\pi)$.  It suffices to consider $h \in C^\ast(G)$ lying in a single summand $\End(\pi)$.  Recall $\C_h$ denotes the operator of convolution by $h$.  Write the commutator $[\i\c(\bm{\!\nu}), F_{f\!\nu}]$ via an integral formula for $F_{f\!\nu}$ as in the proof of Theorem \ref{thm:FundClass}:
\begin{multline}
\label{eqn:posit} 
\C_h [\i\c(\bm{\!\nu}), F_{f\!\nu}] \C_h^{\ast} = \frac{2}{\pi}\int_0^\infty (1+\lambda^2 + \st{D}_{f\!\nu}^2)^{-1} \Big((1+ \lambda^2) \, \C_h [\i\c(\bm{\!\nu}), \st{D}_{f\!\nu}]  \C_h^{\ast} \\
+ \st{D}_{f\!\nu} \, \C_h [\i\c(\bm{\!\nu}), \st{D}_{f\!\nu}] \C_h^{\ast}\, \st{D}_{f\!\nu} \Big)(1+\lambda^2 + \st{D}_{f\!\nu}^2)^{-1}d\lambda.
\end{multline}
The integral formula for $F_{f\!\nu}$ is convergent in the strong operator topology.  Here, we have used the $G$-equivariance of $\c(\bm{\!\nu})$ and $\st{D}_{f\!\nu}$, which implies that they commute with $\C_h$. Consider the graded commutator
\[ [\i\c(\bm{\!\nu}),\st{D}_{f\!\nu}]=\i[\c(\bm{\!\nu}),\st{D}]+f|\bm{\!\nu}|^2. \]
It follows from the admissibility condition on $f$ and our assumption that $|\bm{\!\nu}|=1$ outside a compact set that the function
\[ f\big(|\bm{\!\nu}|^2-f^{-1}(C_\pi |\!\nu|+|\nabla^{\tn{LC}}\bm{\!\nu}|+|\pair{\mu^E}{\!\nu}|)\big)\]
is bounded below, where $C_\pi$ is the constant appearing in inequality \eqref{eqn:ineqsemi}; let $-\infty<b \le 0$ be any (strictly) lower bound. It follows from the proof of Lemma \ref{lem:CptResolvent} (see especially inequality \eqref{eqn:ineqsemi}) that the operator
\[ P=[\i\c(\bm{\!\nu}),\st{D}_{f\!\nu}]-b\]
is a positive unbounded operator when restricted to the $\pi$-isotypical component of $L^2(M,E)$.  Thus
\[ \C_h[\i\c(\bm{\!\nu}),\st{D}_{f\!\nu}]\C_h^\ast=\C_hP\C_h^\ast+b\C_h\C_h^\ast, \]
and $\C_hP\C_h^\ast$ is a positive operator. The contribution of $P$ to the integrand in \eqref{eqn:posit} is a positive operator, and the corresponding integral converges in the strong operator topology to a positive operator.\\ 

The contribution to the integral \eqref{eqn:posit} of $b\C_h\C_h^\ast$ is 
\begin{equation}
\label{eqn:posit2} 
\frac{2b}{\pi}\int_0^\infty (1+\lambda^2 + \st{D}_{f\!\nu}^2)^{-1} \Big((1+ \lambda^2)\C_h\C_h^{\ast}
+ \st{D}_{f\!\nu}\C_h\C_h^{\ast} \st{D}_{f\!\nu} \Big)(1+\lambda^2 + \st{D}_{f\!\nu}^2)^{-1}d\lambda.
\end{equation}
The two terms in the integrand are analysed as in the proof of Theorem \ref{thm:FundClass}.  For example consider
\begin{equation} 
\label{eqn:Cterm}
b(1+\lambda^2 + \st{D}_{f\!\nu}^2)^{-1} \st{D}_{f\!\nu} \C_h\C_h^{\ast}\st{D}_{f\!\nu}(1+\lambda^2 + \st{D}_{f\!\nu}^2)^{-1}.
\end{equation}
By Lemma \ref{lem:CptResolvent} the operator $(1+\lambda^2+\st{D}_{f\!\nu}^2)^{-1}\st{D}_{f\!\nu}\C_h$ is compact, with norm $O(\lambda^{-1})$, and the same is true of its adjoint.  Thus \eqref{eqn:Cterm} is a compact operator with norm $O(\lambda^{-2})$.  It follows that the integral \eqref{eqn:posit2} converges in norm to a compact operator.
\end{proof}

\begin{remark} 
In the case when $M$ is compact, the equivariant index of $\st{D}$ can be obtained by applying the collapse map $M \to \text{pt}$ to the class $[\st{D}] \in K^0_G(C_0(M))$. In the present non-compact situation, the result above shows that the map $(j^G[\!\nu] \wh{\otimes} \, \bm{.})$ plays a similar role.   
\end{remark} 

\section{Applications} \label{Section3}

In this section let $M$ be a complete Riemannian manifold equipped with an isometric action of a compact Lie group $G$, and let $\st{D}_{f\!\nu} = \st{D} + \i f\c(\bm{\!\nu})$ be a deformed Dirac operator associated to a ($\bZ_2$-graded) Clifford module bundle $E \to M$. 

\subsection{Excision for deformed Dirac operators.}\label{sec:excision}
A first consequence of the KK-product factorization of $\st{D}_{f\!\nu}$ is an excision result for its index, which can be seen as a rough $K$-theoretic analogue of localization formulas in equivariant cohomology.\\ 

Recall that we assumed $|\bm{\!\nu}|=1$ outside a compact set.  Let $U \subset M$ be a $G$-invariant open set such that $|\bm{\!\nu}|=1$ outside $U$, and let $\iota_U \colon \Cl_\Gamma(U)\hookrightarrow \Cl_\Gamma(M)$ be the extension-by-$0$ homomorphism.  Let $\!\nu_U=\!\nu|_U$.  The pair $(\Cl_\Gamma(U),\c(\bm{\!\nu}_U))$ determines a class $[\!\nu_U]\in \KK^G_0(\bC,\Cl_\Gamma(U))$.
\begin{proposition} \label{prop:excision} $(\iota_U)_\ast[\!\nu_U]=[\!\nu] \in \KK^G(\bC,\Cl_\Gamma(M))$.\ignore{

Let $U \subset M$ be a $G$-invariant open set such that $|\bm{\!\nu}|=1$ outside $U$. Then
\[ \big[ \big(\Cl_\Gamma(U),\c(\bm{\!\nu}\vert_{U}) \big) \big] = \big[ \big(\Cl_\Gamma(M),\c(\bm{\!\nu}) \big) \big] \in \KK^G_0(\bC,\Cl_\Gamma(M)). \]}
\end{proposition}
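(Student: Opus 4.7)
The plan is to unpack $(\iota_U)_\ast[\!\nu_U]$ as a concrete Kasparov cycle on $\Cl_\Gamma(M)$, identify the obstruction to its equality with $[\!\nu]$ as a strictly degenerate quotient cycle, and then deduce the equality by an operator homotopy. Set $A := \Cl_\Gamma(M)$, $F := \i\c(\bm{\!\nu}) \in M(A)$, and $J := \iota_U(\Cl_\Gamma(U))$. Since $J$ is a closed, two-sided, $G$-invariant ideal of $A$ (the continuous sections of $\Cliff(\Gamma)$ vanishing outside $U$), there is a canonical isomorphism $\Cl_\Gamma(U) \otimes_{\iota_U} A \cong J$ of Hilbert $A$-modules, under which $(\iota_U)_\ast[\!\nu_U]$ is represented by the Kasparov cycle $(J, F|_J)$ with $F$ viewed as an odd self-adjoint multiplier of $J$.

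Next I would analyze the quotient. Because $|\bm{\!\nu}| = 1$ outside a compact subset of $U$, the element $F^2 - 1 = |\bm{\!\nu}|^2 - 1$ belongs to $J$; hence $F$ preserves $J$ as a submodule of $A$ and descends to an operator $\bar F$ on $A/J \cong \Cl_\Gamma(M \setminus U)$ satisfying $\bar F^2 = 1$ identically. The quotient cycle $(A/J, \bar F)$ is therefore strictly degenerate and KK-trivial. This reduces the claim to the equality $[(A, F)] = [(J, F|_J)]$ in $\KK^G_0(\bC, A)$, where both cycles share the same operator $F$ but live on the different Hilbert $A$-modules $A$ and $J \subset A$.

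The main obstacle is to produce this last equality, since the short exact sequence of Hilbert $A$-modules $0 \to J \to A \to A/J \to 0$ does not split algebraically in general, precluding any direct decomposition $(A, F) \cong (J, F|_J) \oplus (A/J, \bar F)$. My plan is to construct an approximate splitting using a $G$-invariant cutoff $\psi \in C_c^\infty(M)^G$ with $\psi \in J$ and $\psi \equiv 1$ on a neighborhood of $\{|\bm{\!\nu}|^2 \ne 1\}$; then $(1-\psi)(1 - |\bm{\!\nu}|^2) = 0$, so $F$ acts as an exact symmetry on $(1-\psi)A$, while the support-projection $\psi$ identifies the ``interesting'' part of $A$ with a submodule of $J$. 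Combining this geometric cutoff with Kasparov's stabilization theorem to absorb residual tails, one constructs an operator homotopy between $(A, F) \oplus (A/J, -\bar F)$ and $(J, F|_J) \oplus (A/J, \bar F) \oplus (A/J, -\bar F)$. Discarding the (strictly degenerate) $A/J$-summands on both sides then yields $[(A, F)] = [(J, F|_J)]$, which combined with Step 1 completes the proof.
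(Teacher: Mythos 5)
The first two steps of your plan are correct and match the paper's starting point: $(\iota_U)_\ast[\nu_U]$ is represented by the pair $(J, F|_J)$ with $J := \iota_U(\Cl_\Gamma(U))$ an ideal in $A := \Cl_\Gamma(M)$ and $F := \i\c(\bm{\nu})$, and since $1 - F^2 = 1 - |\bm{\nu}|^2 \in J$ the induced operator on $A/J$ is an exact symmetry, so the quotient cycle is degenerate. The gap is in the third step. You assert that ``one constructs an operator homotopy between $(A, F) \oplus (A/J, -\bar F)$ and $(J, F|_J) \oplus (A/J, \bar F) \oplus (A/J, -\bar F)$'' using a cutoff $\psi$ and Kasparov stabilization, but you never actually produce this homotopy. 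This is precisely the nontrivial content of the proposition, and it is not merely a routine verification: an operator homotopy requires the two cycles to live on the \emph{same} Hilbert $A$-module, and $A \oplus A/J$ is not obviously isomorphic to $J \oplus A/J \oplus A/J$; the sequence $0 \to J \to A \to A/J \to 0$ has no natural splitting as Hilbert modules, as you yourself note. The cutoff $\psi$ is not a projection (its square differs from itself), so ``$\psi A$ plus $(1-\psi)A$'' is not a decomposition of $A$, and invoking stabilization ``to absorb residual tails'' does not by itself supply the missing unitary or the path of operators. As it stands the argument does not close.

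The paper instead produces the homotopy in one stroke via the mapping-cylinder module
\[ \E = \{\, f \in C([0,1], \Cl_\Gamma(M)) \;:\; \supp(f(1)) \subset U \,\}, \qquad \F = \i\c(\bm{\nu}), \]
which is a Hilbert $\Cl_\Gamma(M)\wh{\otimes}C[0,1]$-module with evaluation at $0$ giving $(A, F)$ and evaluation at $1$ giving $(J, F|_J)$; the fact that $1 - \F^2 = 1 - |\bm{\nu}|^2$ is supported in $U$ (indeed compactly supported there) makes $1-\F^2$ compact on $\E$. This is the standard clean device for showing that pushing a cycle forward from an ideal to the ambient algebra does not change its class when $1 - F^2$ already lies in the ideal, and it is exactly what your outline was groping for. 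I recommend replacing your operator-homotopy sketch with this mapping-cylinder homotopy.
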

\begin{proof}
Under the obvious identification $\Cl_\Gamma(U)\wh{\otimes}_{\Cl_\Gamma(U)}\Cl_\Gamma(M)\simeq \Cl_\Gamma(U)$, the element $(\iota_U)_\ast[(\Cl_\Gamma(U),\c(\bm{\!\nu}_U))]=[(\Cl_\Gamma(U),\c(\bm{\!\nu}_U))]\wh{\otimes}_{\Cl_\Gamma(U)} [\iota_U]$ is represented by the pair $[(\Cl_\Gamma(U),\c(\bm{\!\nu}_U))] \in \KK^G(\bC,\Cl_\Gamma(M))$. Then, a homotopy between this cycle and the cycle $(\Cl_\Gamma(M),\c(\bm{\!\nu}))$ is provided by the following $(\bC,\Cl_\Gamma(M)\wh{\otimes} C[0,1])$-cycle $(\E,\F)$:
\[ \E=\{\text{continuous functions }f \colon [0,1] \rightarrow \Cl_\Gamma(M) \, : \, \supp(f(1)) \subset U \};  \quad \F=\i \c(\bm{\!\nu}) .\]
That $1-\F^2=1-|\bm{\!\nu}|^2$ is a compact operator on $\E$ comes from the fact that $|\bm{\!\nu} |^2 = 1$ outside of $U$, whence the result. 
\end{proof}

\begin{corollary}
$[\st{D}_{f\!\nu}]=j^G[\!\nu_U]\wh{\otimes}_{G\ltimes \Cl_\Gamma(U)}[\st{D}_{U,\Gamma}]$.
\end{corollary}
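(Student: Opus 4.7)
The plan is to derive the corollary by combining the three main results already established: the KK-product factorization Theorem \ref{thm:KK-product}, the excision statement for the taming-vector-field class in Proposition \ref{prop:excision}, and the open-restriction property of the transverse Dirac class in Proposition \ref{prop:OpenRestrict}. The only extra ingredient is the well-known fact that Kasparov's descent functor $j^G$ is compatible with the intersection product, i.e. $j^G(x\wh{\otimes}_B y) = j^G(x)\wh{\otimes}_{G\ltimes B} j^G(y)$, together with the fact that on the class of a $G$-equivariant $^\ast$-homomorphism such as $\iota_U$, descent simply returns the class of the induced homomorphism between crossed products.

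First, I would apply Theorem \ref{thm:KK-product} to rewrite $[\st{D}_{f\!\nu}] = j^G[\!\nu]\wh{\otimes}_{G\ltimes \Cl_\Gamma(M)}[\st{D}_{M,\Gamma}]$. Next, by Proposition \ref{prop:excision}, the class $[\!\nu]$ factors in $\KK^G_0(\bC,\Cl_\Gamma(M))$ as $[\!\nu]=[\!\nu_U]\wh{\otimes}_{\Cl_\Gamma(U)}[\iota_U]$, where $[\iota_U]\in \KK^G_0(\Cl_\Gamma(U),\Cl_\Gamma(M))$ is the class of the inclusion $^\ast$-homomorphism. Applying $j^G$ and using its multiplicativity, this becomes $j^G[\!\nu] = j^G[\!\nu_U]\wh{\otimes}_{G\ltimes \Cl_\Gamma(U)}[\iota_U]$, where the second factor now denotes the class of the induced inclusion of crossed products.

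Substituting this into the factorization and using associativity of the KK-product, I obtain
\[
[\st{D}_{f\!\nu}] = j^G[\!\nu_U]\wh{\otimes}_{G\ltimes \Cl_\Gamma(U)}\big([\iota_U]\wh{\otimes}_{G\ltimes \Cl_\Gamma(M)}[\st{D}_{M,\Gamma}]\big).
\]
The inner product on the right is precisely the pullback $\iota_U^\ast[\st{D}_{M,\Gamma}]$, which by Proposition \ref{prop:OpenRestrict} equals $[\st{D}_{U,\Gamma}]$. This yields the claimed identity.

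There is really no serious obstacle: the argument is entirely formal once the earlier results are in place. The only point that warrants a brief verification is the compatibility of $j^G$ with the KK-product on the particular cycle representing $[\!\nu]$, but this is a standard property of Kasparov descent and applies without modification here.
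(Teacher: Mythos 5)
Your proof is correct and follows essentially the same route as the paper: apply Theorem \ref{thm:KK-product}, use Proposition \ref{prop:excision} to factor the taming class through $U$, invoke associativity, and conclude with Proposition \ref{prop:OpenRestrict}. You are slightly more explicit about the multiplicativity of the descent functor $j^G$, a step the paper leaves implicit, but the argument is the same.
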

\begin{proof}
This follows from the KK-product factorization of Theorem \ref{thm:KK-product}, Proposition \ref{prop:excision}, plus associativity of the Kasparov product:
\[ [\st{D}_{f\!\nu}]=(j^G[\!\nu_U]\wh{\otimes} [\iota_U])\wh{\otimes} [\st{D}_{M,\Gamma}]=j^G[\!\nu_U]\wh{\otimes}([\iota_U]\wh{\otimes} [\st{D}_{M,\Gamma}]) \]
together with the fact that $[\iota_U]\wh{\otimes}_{G\ltimes \Cl_\Gamma(M)}[\st{D}_{M,\Gamma}]=[\st{D}_{U,\Gamma}]$ (Proposition \ref{prop:OpenRestrict}). 
\end{proof}

The corollary together with another application of Theorem \ref{thm:KK-product} on the manifold $U$, imply that the index of $\st{D}_{f\!\nu}$ can be computed from the index of a deformed Dirac operator on $U$.  This operator is determined up to suitable homotopy by the condition that it represents the KK-product $j^G[\!\nu_U]\wh{\otimes}_{G\ltimes \Cl_\Gamma(U)}[\st{D}_{U,\Gamma}]$. Note however that one cannot simply restrict $\st{D}_{f\!\nu}$ to $U$; one should for example complete the metric on $U$ and also replace $f|_U$ with a function that is admissible for $U$. This result was proved by Braverman \cite{Braverman2002} using the cobordism invariance of the index (see the next section).  Here we obtain it as a consequence of the KK-product factorization.

\subsection{Cobordism invariance of the index} \label{sec:cobinv}
We will reprove the following result of Braverman \cite{Braverman2002}, which leads directly to the cobordism invariance of the index of the deformed Dirac operator. 
\begin{theorem}
\label{thm:CobInv}
Let $M$ be a Riemannian $G$-manifold which is the boundary of a Riemannian $G$-manifold $\ti{M}$.  Let $\ti{E}$ be a $G$-equivariant (ungraded) Clifford module bundle over $\ti{M}$, and let $E=\ti{E}|_M$ be the induced Clifford module bundle over the boundary $M$ with $\bZ_2$-graded subbundles $E^{\pm}$ given by the $\pm \i$-eigenbundles of $\c(n)$, where $n$ is the inward unit normal vector to the boundary. Let $\st{D}$ be a Dirac operator associated to $E$, let $\ti{\!\nu}\colon \ti{M} \rightarrow \g$ be a taming map and let $\!\nu$ be its restriction to $M$. Then
\[ j^G[\!\nu] \wh{\otimes}_{G\ltimes \Cl_\Gamma(M)} [\st{D}_{M,\Gamma}]=0 \in K^0(C^*(G)).\]
\end{theorem}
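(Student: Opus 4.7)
The strategy is to combine Proposition \ref{prop:DiracBoundary}, which expresses $[\st{D}_{M,\Gamma}]$ as a boundary map applied to the transverse Dirac class on the interior $\widetilde{M}\setminus M$, with the observation that $[\nu]$ lifts through the quotient $\Cl_\Gamma(\widetilde{M}) \to \Cl_\Gamma(M)$ to a class defined from the taming map $\widetilde{\nu}$ on $\widetilde{M}$. The vanishing is then a formal consequence of the general principle that in a $6$-term exact sequence the composition of the quotient map and the boundary is zero.

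\smallskip

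In detail, the plan is as follows. First, since $\widetilde{\nu}$ is a taming map we may rescale it so that $|\bm{\widetilde{\nu}}|=1$ outside a compact set of $\widetilde{M}$; the restriction $\nu = \widetilde{\nu}|_M$ is then automatically a taming map on $M$ with $|\bm{\nu}|=1$ outside a compact set of $M$. The pair $(\Cl_\Gamma(\widetilde{M}), \i\c(\bm{\widetilde{\nu}}))$ defines a class
\[ [\widetilde{\nu}] \in \KK^G_0(\bC,\Cl_\Gamma(\widetilde{M})), \]
and I would first check that under the restriction $*$-homomorphism $q\colon \Cl_\Gamma(\widetilde{M}) \to \Cl_\Gamma(M)$ one has $q_\ast[\widetilde{\nu}] = [\nu]$. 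This follows directly from the definition of pushforward in KK-theory: $\Cl_\Gamma(\widetilde{M}) \wh{\otimes}_{\Cl_\Gamma(\widetilde{M})} \Cl_\Gamma(M) \simeq \Cl_\Gamma(M)$ and the operator $\i\c(\bm{\widetilde{\nu}})$ restricts to $\i\c(\bm{\nu})$. Applying the descent functor and using its compatibility with pushforwards along equivariant $*$-homomorphisms, I obtain
\[ j^G[\nu] = (G\ltimes q)_\ast\, j^G[\widetilde{\nu}] = j^G[\widetilde{\nu}] \wh{\otimes}_{G\ltimes \Cl_\Gamma(\widetilde{M})}[G\ltimes q]. \]

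\smallskip

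Second, the boundary map appearing in Section \ref{sec:boundary} is represented by a canonical KK-class
\[ [\partial] \in \KK_1\bigl(G\ltimes \Cl_\Gamma(M),\, G\ltimes \Cl_\Gamma(\widetilde{M}\setminus M)\bigr), \]
associated to the extension displayed there, and Proposition \ref{prop:DiracBoundary} reads
\[ [\st{D}_{M,\Gamma}] = [\partial] \wh{\otimes}_{G\ltimes \Cl_\Gamma(\widetilde{M}\setminus M)} [\widetilde{\st{D}}_{\widetilde{M}\setminus M,\Gamma}]. \]
The key general fact I will invoke is that in any short exact sequence of $C^\ast$-algebras, the composition of the KK-class of the quotient map with the boundary class vanishes; here this gives
\[ [G\ltimes q] \wh{\otimes}_{G\ltimes \Cl_\Gamma(M)} [\partial] = 0 \quad \text{in } \KK_1\bigl(G\ltimes \Cl_\Gamma(\widetilde{M}),\, G\ltimes \Cl_\Gamma(\widetilde{M}\setminus M)\bigr). \]

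\smallskip

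Combining the two steps and using associativity of the Kasparov product,
\[ j^G[\nu]\wh{\otimes}_{G\ltimes \Cl_\Gamma(M)}[\st{D}_{M,\Gamma}] = j^G[\widetilde{\nu}]\wh{\otimes}\bigl([G\ltimes q]\wh{\otimes}[\partial]\bigr)\wh{\otimes}[\widetilde{\st{D}}_{\widetilde{M}\setminus M,\Gamma}] = 0, \]
proving the theorem. I do not expect any serious obstacle: all the hard analytic work is already encoded in Proposition \ref{prop:DiracBoundary}, and the remaining content is formal Kasparov-theoretic manipulation. The one item deserving care is the verification that $q_\ast[\widetilde{\nu}] = [\nu]$ for the chosen normalization of the taming map, since this requires $|\bm{\widetilde{\nu}}|=1$ outside a compact set of $\widetilde{M}$ in order that both $[\widetilde{\nu}]$ and $[\nu]$ be bona fide KK-cycles simultaneously; but as noted, this is readily arranged by a single global rescaling of $\widetilde{\nu}$.
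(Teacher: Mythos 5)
Your argument is correct and is essentially the paper's proof: you write $[\st{D}_{M,\Gamma}] = \partial[\ti{\st{D}}_{\ti{M}\setminus M,\Gamma}]$ via Proposition \ref{prop:DiracBoundary}, lift $j^G[\nu]$ through the restriction homomorphism $G\ltimes \Cl_\Gamma(\ti{M}) \to G\ltimes \Cl_\Gamma(M)$ to $j^G[\ti{\nu}]$, and conclude from exactness of the six-term sequence that the quotient map composed with the boundary map vanishes. The only cosmetic difference is that you phrase the vanishing via the boundary KK-class $[\partial]$ rather than as the induced map $r^\ast\circ\partial$ on K-homology groups, which is the same content.
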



\subsubsection{Review of cobordism invariance in the standard case.}
Let us first recall the Baum-Douglas Taylor proof of cobordism invariance in the standard case (cf. \cite[p.765]{BDTRelativeCycles}), i.e we assume $\ti{M}$ (and then $M$) is compact, and ignore the $G$-action. The key $C^\ast$-algebra extension is
\begin{equation} 
\label{eqn:ShortExact1}
0 \rightarrow C_0(\ti{M}\setminus M) \rightarrow C_0(\ti{M}) \xrightarrow{r} C_0(M) \rightarrow 0
\end{equation}
where $r$ denotes restriction to the boundary.  The proof of cobordism invariance is based on the analogue of Proposition \ref{prop:DiracBoundary}:
\[ \partial [\ti{\st{D}}]=[\st{D}] \]
where $[\ti{\st{D}}]\in K^1(C_0(\ti{M}\setminus M))$ is the K-homology class defined by the Dirac operator $\ti{\st{D}}$ on the odd-dimensional (open) manifold $\ti{M} \setminus M$, and $\partial$ is the boundary homomorphism in the six term exact sequence (in K-homology) associated to \eqref{eqn:ShortExact1}.\\ 

Let $\ti{p}$ (resp. $p$) denote the homomorphism $\bC \rightarrow C_0(\ti{M})$ (resp. $\bC \rightarrow C_0(M)$) obtained from the collapsing map $\ti{M}\rightarrow \pt$ (resp. $M \rightarrow \pt$).  Hence
\begin{equation} 
\label{eqn:RestrictToBd}
r\circ \ti{p}=p \quad \Rightarrow \quad \ti{p}^\ast \circ r^\ast=p^\ast.
\end{equation}  

We have
\[ p^\ast [\st{D}]=\ti{p}^\ast \circ r^\ast \circ \partial [\ti{\st{D}}]\]
but the middle composition $r^\ast \circ \partial=0$ because it is the composition of two successive maps in the six term sequence. $\hfill{\square}$

\subsubsection{Proof of Theorem \ref{thm:CobInv}.}
The relevant $C^\ast$-algebra extension to consider in this case is
\begin{equation} 
\label{eqn:ShortExact2}
0 \rightarrow G \ltimes \Cl_\Gamma(\ti{M}\setminus M) \rightarrow G \ltimes  \Cl_\Gamma(\ti{M}) \xrightarrow{r} G \ltimes \Cl_\Gamma(M) \rightarrow 0.
\end{equation}
where $r$ is also the restriction map. The replacements for the collapsing maps $\ti{p}$, $p$ are the taming maps $\ti{\!\nu}$, $\!\nu$ which define elements $j^G[\ti{\!\nu}] \in \KK_0(C^\ast(G), G\ltimes \Cl_\Gamma(\ti{M}))$, $j^G[\!\nu] \in \KK_0(C^\ast(G),G\ltimes \Cl_\Gamma(M))$ respectively.  Then, we have
\[ j^G[\!\nu]=j^G[\ti{\!\nu}]\wh{\otimes} [r^\ast] \]
which is the analogue of equation \eqref{eqn:RestrictToBd} (we regard the $\ast$-homomorphism $r$ as an element $[r^\ast] \in \KK(G\ltimes \Cl_\Gamma(\ti{M}), G\ltimes \Cl_\Gamma(M))$ here).  Thus
\[ j^G[\!\nu]\wh{\otimes} [\st{D}_{M,\Gamma}]=j^G[\ti{\!\nu}]\wh{\otimes} [r^\ast] \wh{\otimes} [\ti{\st{D}}_{M,\Gamma}]=j^G[\ti{\!\nu}]\wh{\otimes} [r^\ast] \wh{\otimes} \partial [\ti{\st{D}}_{\ti{M},\Gamma}], \]
where the second equality uses Proposition \ref{prop:DiracBoundary}. But
\[ [r^\ast]\wh{\otimes} \partial[\ti{\st{D}}_{\ti{M},\Gamma}]=r^\ast \circ \partial[\ti{\st{D}}_{\ti{M},\Gamma}]\]
and $r^\ast \circ \partial=0$ for the same reason as before: it is the composition of two successive maps in the six term exact sequence for \eqref{eqn:ShortExact2}. This completes the proof. $\hfill{\square}$

\section{Deformed Dirac operators and transversally elliptic operators}\setcounter{section}{4} \label{Section4}
In this section, we provide a KK-theoretic proof of the following theorem due to Braverman \cite[Theorem 5.5]{Braverman2002} (see also \cite{MaZhangBrav,LSWit}).

\begin{theorem} \label{thm:BravInd} Let $M$ be a complete Riemannian $G$-manifold equipped with an isometric action of a compact Lie group $G$, and let $\st{D}_{f\!\nu}$ be a deformed Dirac operator. Then, the equivariant index of $\st{D}_{f\!\nu}$ in $R^{-\infty}(G)$ is equal to the index (in Atiyah's sense) of the transversally elliptic symbol $\sigma_{\!\nu}^0(\xi)=\i\c(\xi+\bm{\!\nu})$ obtained by deforming the symbol of the Dirac operator using the vector field $\bm{\!\nu}$.
\end{theorem}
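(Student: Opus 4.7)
The plan is to express both sides of the claimed equality as Kasparov products sharing a common ``de Rham-Dirac'' middle term, and then identify them via associativity and functoriality. The first step is to reinterpret the deformed symbol $\sigma_{\!\nu}^0(\xi)=\i\c(\xi+\bm{\!\nu})$ as an external Kasparov product: its two summands $\i\c(\xi)$ and $\i\c(\bm{\!\nu})$ are Clifford multiplications in pointwise orthogonal directions inside $\Cl_\tau(M)\wh{\otimes}_{C_0(M)}\Cl_\Gamma(M)\simeq \Cl_{\tau\oplus\Gamma}(M)$. The Clifford symbol $[E]\in \mathcal{R}\KK^G_0(M;C_0(M),\Cl_\tau(M))$ associated to $\st{D}$ (cf.\ the comment on Theorem~\ref{thm:Significance}) and the class $[\!\nu]\in \KK^G_0(\bC,\Cl_\Gamma(M))$ from Section~\ref{Section2} then combine to represent
\[ [\sigma_{\!\nu}^0]=[E]\wh{\otimes}_{C_0(M)}[\!\nu] \in \mathcal{R}\KK^G_0(M;C_0(M),\Cl_{\tau\oplus\Gamma}(M)).\]

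Second, I would invoke Kasparov's index theorem for transversally elliptic operators \cite[Theorem~8.18]{KasparovTransversallyElliptic}, which expresses the Atiyah-type equivariant index of such a symbol as a KK-product against the orbital de Rham-Dirac class $[d_{M,\Gamma}]\in \K^0(G\ltimes\Cl_{\tau\oplus\Gamma}(M))$ recalled in Section~\ref{Section1}. After applying the descent map $j^G$, this reads
\[ \index(\sigma_{\!\nu}^0) = j^G\Big([E]\wh{\otimes}_{C_0(M)}[\!\nu]\Big)\wh{\otimes}_{G\ltimes\Cl_{\tau\oplus\Gamma}(M)}[d_{M,\Gamma}]. \]
Using compatibility of descent with external products and associativity of the Kasparov product, the right-hand side may be reassociated as
\[ j^G[\!\nu]\wh{\otimes}_{G\ltimes\Cl_\Gamma(M)}\Big(j^G[E]\wh{\otimes}_{G\ltimes\Cl_{\tau\oplus\Gamma}(M)}[d_{M,\Gamma}]\Big). \]

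Third, one recognizes the bracketed inner product as precisely the transverse Dirac class $[\st{D}_{M,\Gamma}]$: this is the orbital Clifford analogue of the Poincar\'e-duality factorization $[\st{D}_M]=j^G[E]\wh{\otimes}[d_M]$ behind Kasparov's Theorem~8.9, and is established by essentially the same argument as Theorem~\ref{thm:Significance}. Substituting and invoking the KK-product factorization of Theorem~\ref{thm:KK-product}, one obtains
\[ \index(\sigma_{\!\nu}^0) = j^G[\!\nu]\wh{\otimes}_{G\ltimes\Cl_\Gamma(M)}[\st{D}_{M,\Gamma}] = [\st{D}_{f\!\nu}], \]
which is exactly Braverman's identity.

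The main obstacle will be in the first step: carefully identifying Atiyah's topological index of $\sigma_{\!\nu}^0\in \K^0_G(T^\ast_G M)$ with the Kasparov class constructed above, inside the appropriate inductive limit of representable $\mathcal{R}\KK^G$-groups over $M$, and verifying that the support conditions built into Theorem~8.18 are satisfied---which is ultimately guaranteed by the normalization $|\bm{\!\nu}|=1$ outside a compact set, as in Section~\ref{sec:excision}. The complementary identity $[\st{D}_{M,\Gamma}]=j^G[E]\wh{\otimes}[d_{M,\Gamma}]$ requires adapting Kasparov's argument for the de Rham-Dirac operator to an arbitrary Clifford module bundle $E$, but this is a routine variation already alluded to in the comment following Theorem~\ref{thm:Significance}. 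Once these identifications are in place, the remainder of the argument is pure formalism.
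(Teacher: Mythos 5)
The proposal reproduces the high-level architecture of the paper's argument---factor $[\st{D}_{f\!\nu}]$ via Theorem~\ref{thm:KK-product}, factor $[\st{D}_{M,\Gamma}]$ through the de Rham class as in Proposition~\ref{prop:factorCliffDirac}, and invoke Kasparov's index theorem---but the crucial technical step is skipped, namely Proposition~\ref{thm:rotation}. Your first step asserts $[\sigma_{\!\nu}^0]=[E]\wh{\otimes}_{C_0(M)}[\!\nu]$ on the grounds that the summands ``are Clifford multiplications in pointwise orthogonal directions inside $\Cl_{\tau\oplus\Gamma}(M)$.'' This is not correct: in $\sigma_{\!\nu}^0(\xi)=\i\c(\xi+\bm{\!\nu})$ both $\xi$ and $\bm{\!\nu}$ are tangent vectors acting through the \emph{same} Clifford map $\c$ on the module $E$, and they are orthogonal only over the conormal space $T_GM$, not over all of $TM$. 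In the external Kasparov product $[E]\wh{\otimes}[\!\nu]$, by contrast, $\Cl_\Gamma(M)$ acts on the external tensor factor (what the paper calls $\pi_1$), not by $\c$ on $E$ (what the paper calls $\pi_0$). Matching these two pictures is a genuine theorem, and the paper proves it via a ``rotation'' homotopy whose admissibility depends crucially on the positivity $g(\xi,\varphi_m(\xi))=g(\xi,\rho_m\rho_m^\top\xi)\ge 0$, invoked through the Connes--Skandalis positivity criterion. Your appeal to ``orthogonality'' is precisely the step that has to be earned.

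A second, related gap: you bypass the symbol algebra $\S_\Gamma(M)$, the refined symbol classes $[\sigma_{\!\nu,\tn{c}}]$, and the tangent Clifford symbol $[\sigma^{\tn{tcl}}]$, all of which are built into the hypotheses and statement of Kasparov's Theorem~8.18. The naive class in $K^0_G(T_GM)$ does not plug into that theorem directly; one needs a lift into $\KK^G(\bC,\S_\Gamma(M))$, then the KK-product with $[\f_{M,\Gamma}]$ to land in $\Cl_\Gamma(TM)$, and the paper's Lemma~\ref{lem:factorsymb} together with the estimates underlying $\S_\Gamma$ (Lemmas~\ref{lem:symbolest}, \ref{lem:decayN1}) are what make this work. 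You do flag ``the main obstacle'' in your final paragraph, but you locate it in Atiyah's inductive-limit bookkeeping and the support conditions; the actual obstacle is the rotation homotopy between the two $\Cl_\Gamma(M)$-representations, and the proposal as written offers no argument for it.
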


Such transversally elliptic deformations have interesting applications; we mention for example the work of Paradan \cite{Paradan} on the quantization-commutes-with-reduction theorem in symplectic geometry.  \\

The idea of the proof is relatively simple: we observe that with the appropriate KK-groups, the product of $[\!\nu]$ and of an appropriate symbol class $[\sigma_{M,\Gamma}]$ of $\st{D}$ is the K-theory class of the transversally elliptic symbol $\sigma_{\!\nu}^0$ defined above. The result then follows from our KK-product factorization and a KK-theoretic Poincar\'e duality theorem for transversally elliptic operators obtained by Kasparov \cite[Theorem 8.18]{KasparovTransversallyElliptic}. \\

The first four subsections of this section might be viewed as a brief further `invitation' to Kasparov's work \cite{KasparovTransversallyElliptic}; we do not attempt to be exhaustive, but rather describe a small sample of the many new constructions and results contained in \emph{loc. cit.}, in view of deriving Theorem \ref{thm:BravInd}.

\subsection{The transverse de Rham and Dolbeault classes.}
Recall that there is a canonical K-homology class $[d_M]\in \KK^G(\Cl_\tau(M),\bC)$ associated to the de Rham-Dirac operator acting on differential forms. Denoting $\Cl_{\tau\oplus\Gamma}(M) = \Cl_{\tau}(M) \wh{\otimes}_{C_0(M)} \Cl_{\Gamma}(M)$, a similar construction to \ref{thm:FundClass} applied to the de Rham-Dirac operator on $L^2(M,\wedge T^\ast M)$ produces a class $[d_{M,\Gamma}]\in \KK(G\ltimes \Cl_{\tau\oplus\Gamma}(M),\bC)$ that we refer to as the \emph{transverse de Rham class} (cf. \cite[Definition-Lemma 8.8]{KasparovTransversallyElliptic}).\\ 

\begin{definition}
\label{def:cliffsymb}
Let $(E,\c \colon \Cliff(TM)\rightarrow \End(E))$ be a $G$-equivariant Clifford module bundle on $M$. We define a Hilbert $\Cl_\tau(M)$-module $\E_\tau$ as follows: the underlying Banach space is $C_0(M,E)$, the (right) $\Cl_\tau(M)$-module structure is determined on generators $\xi \in C_0(M,TM)$ by the formula
\[ e\cdot \xi=\i (-1)^{\deg(e)}\c(\xi)e, \]
and the $\Cl_\tau(M)$-valued inner product is given point-wise as the composition
\[ E\otimes E \rightarrow E^* \otimes E=\End(E)\simeq \End_{\Cl}(E)\otimes \Cliff(TM) \xrightarrow{2^{-n/2}\tn{tr}\otimes \id} \Cliff(TM), \]
where the first map uses the isomorphism $E \simeq E^*$ determined by the Hermitian structure and $\End_{\Cl}(E)$ denotes endomorphisms of $E$ that commute with the $\Cliff(TM)$ action.
\end{definition}
The class $[\st{D}_M]\in \KK^G(C_0(M),\bC)$ associated to the Dirac operator on $E$ factors as a KK-product (cf. \cite[Definition 3.9, Proposition 3.10]{KasparovTransversallyElliptic})
\begin{equation} 
\label{e:DiracE}
[\st{D}_M]=[\E_\tau]\wh{\otimes}_{\Cl_\tau(M)}[d_M] 
\end{equation}
where $[\E_\tau]\in \R\KK^G(M;C_0(M),\Cl_\tau(M))$\footnote{If $A$ and $B$ be $C_0(M)$-$C^*$-algebras, recall that the bivariant K-group $\R\KK^G(M;A,B)$ is defined the same way as $\KK(A,B)$, with the following additional requirement: if $(H,F)$ is a KK-cycle, then for every $f \in C_0(M), a \in A, b \in B, \xi \in H$, one has $(fa)\xi b = a \xi (fb)$} is the class represented by the cycle having Hilbert $\Cl_\tau(M)$-module $\E_\tau=C_0(M,E)$ and the zero operator.  One has a similar result for the classes $[\st{D}_{M,\Gamma}]$, $[d_{M,\Gamma}]$.  To state it, recall that there is a product in $\R\KK$ (cf. \cite[Proposition 2.21]{KasparovNovikov}):
\[ \wh{\otimes}_{M}\colon \R\KK^G(M;A,B)\times \R\KK^G(M;C,D)\rightarrow \R\KK^G(M;A\wh{\otimes}_{C_0(M)}C,B\wh{\otimes}_{C_0(M)}D).\]
We will write $1_{\Cl_\Gamma(M)} \in \R\KK^G(M;\Cl_\Gamma(M),\Cl_\Gamma(M))$ for the class represented by the pair $(\Cl_\Gamma(M),0)$.  The following is the natural analogue of \eqref{e:DiracE}, and can be checked without difficulty, using for instance Theorem \ref{KK def}.
\begin{proposition}
\label{prop:factorCliffDirac}
There is a factorization
\[ [\st{D}_{M,\Gamma}]=j^G([\E_\tau]\wh{\otimes}_{M}1_{\Cl_\Gamma(M)})\wh{\otimes}_{G\ltimes \Cl_{\tau\oplus\Gamma}(M)}[d_{M,\Gamma}] \in \KK(G \ltimes \Cl_\Gamma(M), \bC). \]
\end{proposition}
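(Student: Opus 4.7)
The plan is to apply the Connes--Skandalis criterion (Theorem \ref{KK def}) directly: I would exhibit the cycle $(L^2(M,E),F_\st{D})$ representing $[\st{D}_{M,\Gamma}]$, with $F_\st{D}=\st{D}(1+\st{D}^2)^{-1/2}$, as the KK-product of the two factors on the right hand side. The first factor, after descent, is represented by the pair $(\F_{\tau,\Gamma},0)$, where $\F_{\tau,\Gamma}:=\E_\tau\wh{\otimes}_{C_0(M)}\Cl_\Gamma(M)$ is a Hilbert $\Cl_{\tau\oplus\Gamma}(M)$-module (with its $L^2$-completion after descent by $G$); the second factor is $(L^2(M,\wedge T^\ast M),F_d)$ with $F_d=(d+d^\ast)(1+(d+d^\ast)^2)^{-1/2}$.

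First I would set up the Hilbert-module identification. Using the point-wise Clifford module structure $\c \colon \Cliff(TM)\to \End(E)$ together with Definition \ref{def:cliffsymb} of $\E_\tau$, the interior tensor product
\[ \F_{\tau,\Gamma}\wh{\otimes}_{G\ltimes \Cl_{\tau\oplus\Gamma}(M)} L^2(M,\wedge T^\ast M) \]
is canonically isometrically isomorphic to $L^2(M,E)$, with the left $G\ltimes \Cl_\Gamma(M)$-action matching the standard one. This is the transverse analogue of the identification used in establishing the classical factorization \eqref{e:DiracE}.

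Next I would verify the three Connes--Skandalis conditions. Condition (a), that $(L^2(M,E),F_\st{D})$ is a valid cycle for $\KK(G\ltimes \Cl_\Gamma(M),\bC)$, is exactly Theorem \ref{thm:FundClass} (together with its non-complete extension in Appendix \ref{AppendixA}). Condition (c) is automatic since the operator on the first factor is zero, so the commutator $[0\wh{\otimes} 1,F_\st{D}]=0$. The substance of the proof lies in (b), namely that $F_\st{D}$ is an $F_d$-connection. On generators $\xi=\alpha\wh{\otimes} \beta$ with $\alpha$ a smooth compactly supported section of $E$ and $\beta \in \Cl^\infty_{\Gamma,c}(M)$, the associated operator $T_\xi \colon L^2(M,\wedge T^\ast M)\to L^2(M,E)$ is a bounded combination of Clifford contractions with $\alpha$ and $\beta$, and one must show that $T_\xi F_d - (-1)^{\deg\xi} F_\st{D} T_\xi$ and the adjoint expression are compact.

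The main obstacle will be this connection check. The key observation is that both $\st{D}$ and $d+d^\ast$ have Clifford multiplication as their principal symbol, so $\st{D} T_\xi - (-1)^{\deg\xi} T_\xi (d+d^\ast)$ reduces to a bounded operator with compactly supported coefficient, arising from the difference between the given Clifford connection on $E$ and the Levi--Civita connection on $\wedge T^\ast M$ — a manipulation in the spirit of Lemma \ref{commutator lem} and the Key Lemma. I would then express $F_\st{D}$ and $F_d$ via the Cauchy integrals used in the proof of Theorem \ref{thm:FundClass}, and deduce from the Rellich lemma and the uniform $O(\lambda^{-2})$ estimates that the integrands are compact with operator norm decaying in $\lambda$, so that the integral converges in norm to a compact operator. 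This strategy is essentially a variant of Kasparov's argument for \cite[Proposition 3.10]{KasparovTransversallyElliptic} adapted to accommodate the crossed product by $G$ and the orbital Clifford algebra; the verification is technical but routine given the techniques already developed in Section \ref{Section1}.
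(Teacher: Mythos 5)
Your proposal is correct and follows exactly the route the paper alludes to: the paper's own justification for Proposition \ref{prop:factorCliffDirac} consists of the single remark that it is the natural analogue of \eqref{e:DiracE} and "can be checked without difficulty, using for instance Theorem \ref{KK def}," and you have filled in the expected Connes--Skandalis verification — the Hilbert-module identification, the trivially satisfied positivity since the first operator is $0$, and the $F_d$-connection check via the commutator analysis (Lemma \ref{commutator lem} and the Key Lemma to control the orbital first-order terms after convolution), Cauchy integrals, and Rellich, mirroring the proof of Theorem \ref{thm:FundClass}. One small notational caveat: in the displayed tensor product the first factor should be the descent $G\ltimes\F_{\tau,\Gamma}$ rather than $\F_{\tau,\Gamma}$ itself (as you note parenthetically in the text), since only the former is a Hilbert $G\ltimes\Cl_{\tau\oplus\Gamma}(M)$-module.
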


There is a well-known class $[d_\xi]\in \R\KK^G(M;C_0(TM),\Cl_\tau(M))$ implementing the KK-equivalence between the algebras $C_0(TM)$ and $\Cl_\tau(M)$ (cf. \cite[Definition 2.5]{KasparovTransversallyElliptic}); it can be described explicitly via a family of Dirac operators $\mathcal{D}_m$, $m \in M$ on the family of Hilbert spaces $L^2(T_mM,\Cliff(T_mM))$, $m \in M$ for the fibres of the bundle $\pi_{TM}\colon TM \rightarrow M$.
\begin{definition}[\cite{KasparovTransversallyElliptic}, Definition 8.17]
\label{def:transdol}
Let $\Cl_\Gamma(TM):=C_0(TM)\otimes_{C_0(M)}\Cl_\Gamma(M)$ (beware this is not exactly the orbital Clifford algebra of the $G$-manifold $TM$).  The \emph{transverse Dolbeault class} is the product
\begin{equation}
\label{eqn:DolDeRham}
[\ol{\partial}^{\cl}_{TM,\Gamma}]=j^G([d_\xi]\wh{\otimes}_{M}1_{\Cl_\Gamma(M)})\wh{\otimes}_{G\ltimes \Cl_{\tau\oplus\Gamma}(M)}[d_{M,\Gamma}] \in \KK(G\ltimes \Cl_\Gamma(TM),\bC).
\end{equation}
\end{definition}

The symbol $\sigma(\xi)=\i \c(\la \xi\ra^{-1}\xi)$ of the bounded transform $F=\st{D}(1+\st{D}^2)^{-1/2}$ of the Dirac operator determines a class $[\sigma_M]=[(C_0(TM,\pi_{TM}^\ast E),\sigma(\xi))]\in \R\KK^G(M;C_0(M),C_0(TM))$.  By \cite[Proposition 3.10]{KasparovTransversallyElliptic}, 
\begin{equation}
\label{eqn:CliffSymb}
[\sigma_M]\wh{\otimes}_{C_0(TM)}[d_\xi]=[\E_\tau] \in \R\KK^G(M;C_0(M),\Cl_\tau(M)).
\end{equation}
In Kasparov's terminology, the element in \eqref{eqn:CliffSymb} is referred to as the \emph{Clifford symbol} of $\st{D}$. Proposition \ref{prop:factorCliffDirac} and equations \eqref{eqn:DolDeRham}, \eqref{eqn:CliffSymb} give us the formula
\begin{equation}
\label{eqn:DiracDol}
[\st{D}_{M,\Gamma}]=j^G([\sigma_M]\wh{\otimes}_{M}1_{\Cl_\Gamma(M)})\wh{\otimes}_{G\ltimes \Cl_\Gamma(TM)}[\ol{\partial}^{\cl}_{TM,\Gamma}] \in \KK(G\ltimes \Cl_\Gamma(M),\bC).
\end{equation}

\subsection{Transversally elliptic symbols and the symbol algebra $\S_\Gamma(M)$.}\label{sec:symbalg}
For the purpose of motivation, suppose $M$ is a \emph{compact} Riemannian manifold (we will drop the compactness assumption shortly).  Let $A$ be a $G$-equivariant pseudo-differential operator with symbol $\sigma_A$ acting on sections of a $G$-equivariant Hermitian vector bundle $E$.\footnote{By the `symbol' $\sigma_A$ of $A$, we will mean a section of $\pi_{TM}^\ast \End(E)$ in the usual H{\"o}rmander $(\rho=1,\delta=0)$ class, defined everywhere and not required to be homogeneous, whose equivalence class modulo symbols of lower order is the class of the principal symbol of $A$.}  The \emph{support} $\supp(\sigma_A)$ of $\sigma_A$ is the subset of $T^\ast M\simeq TM$ where $\sigma_A$ fails to be invertible.  The operator $A$ is said to be \emph{transversally elliptic} if $\supp(\sigma_A)\cap T_GM$ is compact, where $T_GM\simeq T_G^\ast M=\tn{ann}(\Gamma)$ is the conormal space to the $G$-orbits.  In this case Atiyah proved \cite{Atiyah} that the restriction $A_\pi$ ($\pi \in \wh{G}$) of $A$ to each isotypical component is Fredholm, hence $A$ has a well-defined `index',\footnote{Atiyah proved a stronger result, that the index determines a distribution on $G$.} 
\begin{equation} 
\label{e:AtiyahDef}
\index(A)=\sum_{\pi \in \wh{G}} \index(A_\pi)\pi \in R^{-\infty}(G)=\bZ^{\wh{G}}.
\end{equation}
Moreover, the index depends only on the class in $\K^0_G(T_GM)=\K^G_0(C_0(T_GM))$ defined by the symbol.\\

\ignore{In KK-theory, cycles for $\K^0_G(T_GM)=\KK^G(\bC,C_0(T_GM))=\R\KK^G(M;C(M),C_0(T_GM))$ are pairs $(\E,\sigma)$, where $\E=C_0(T_GM,\pi_{T_GM}^\ast E)$ is the space of continuous sections vanishing at infinity of the pullback of a $G$-equivariant $\bZ_2$-graded Hermitian vector bundle $E$ on $M$, and $\sigma \in C_0(T_GM,\pi_{T_GM}^\ast \End(E))$ is an odd self-adjoint bundle endomorphism (not necessarily the symbol of a pseudodifferential operator) with the crucial property $(1-\sigma^2)\in C_0(T_GM)$.}

 \ignore{Roughly speaking, one needs to work with bounded bundle endomorphisms $\sigma$ over $TM$ that have a little more in common with (normalized) symbols of transversally elliptic order-$0$ pseudodifferential operators.} 
 
However the K-theory group of the algebra $C_0(T_GM)$ turns out to not be ideal for the purpose of stating an index theorem.  Kasparov's replacement for $C_0(T_GM)$ in this context is the following.\ignore{ improvement of Berline-Vergne's notion of transversally good symbols \cite{BerlineVergne}.}

\begin{definition}[\cite{KasparovTransversallyElliptic}, Definition-Lemma 6.2] \label{def:symbalg}
Let $M$ be a Riemannian manifold (not necessarily compact) with an isometric action of a compact Lie group $G$.  The \emph{symbol algebra} $\S_\Gamma(M)$ is the norm-closure in $C_b(TM)$ (the algebra of continuous bounded functions on $TM$) of the set of all smooth, bounded functions $b(m,\xi)$ on $TM$, which are compactly supported in the $m$ variable, and satisfy the following two conditions:
\begin{enumerate}
\item The exterior derivative $d_mb(m,\xi)$ in $m$ is norm-bounded uniformly in $\xi$, and there is an estimate $|d_\xi b(m,\xi)|\le C(1+|\xi|)^{-1}$ for a constant $C$ which depends only on $b$ and not on $(m,\xi)$.
\item The restriction of $b$ to $T_GM$ belongs to $C_0(T_GM)$.
\end{enumerate}
Given a $G$-equivariant $\bZ_2$-graded Hermitian vector bundle $E$, we can similarly define a Hilbert $\S_\Gamma(M)$-module, denoted $\S_\Gamma(E)$, as the norm-closure in the space of bounded sections of the pull-back bundle $\pi_{TM}^\ast E$ satisfying similar conditions to those in Definition \ref{def:symbalg} (using the norm on the fibres of $\pi_{TM}^\ast E$ induced by the Hermitian structure). 
\end{definition}

We now return to our usual setting, with $M$ a complete Riemannian $G$-manifold.  From now on, we refer to transversally elliptic operators (or symbols) according to the following definition.

\begin{definition} 
\label{def:transell}
Let $A$ be a properly supported, odd, self-adjoint $G$-invariant pseudodifferential operator of order 0 acting on sections of a $G$-equivariant $\bZ_2$-graded Hermitian vector bundle $E$. We will say that $A$ (or its symbol $\sigma_A$) is transversally elliptic if for every $a \in C_0(M)$,  $a\cdot(1-\sigma_A^2)\in \S_\Gamma(M)$. 
\end{definition}

\ignore{We emphasize once again that the definition is stronger than the standard notion of transverse ellipticity (unless the group is trivial).} Since $\S_\Gamma(M) \subset \mathcal{K}_{\S_\Gamma(M)}(\S_\Gamma(E))$ (the compact operators on $\S_\Gamma(E)$ in the Hilbert module sense), a transversally elliptic symbol determines a class
\[ [\sigma_A]=[(\S_\Gamma(E),\sigma_A)]\in \R\KK^G(M;C_0(M),\S_\Gamma(M)).\]
By construction there is a $\ast$-homomorphism $\iota_{T_GM}^\ast \colon \S_\Gamma(M)\rightarrow C_0(T_GM)$, hence a map
\[ \R\KK^G(M;C_0(M),\S_\Gamma(M))\rightarrow \R\KK^G(M;C_0(M),C_0(T_GM)).\]
In this sense the element $[\sigma_A]\in \R\KK^G(M;C_0(M),\S_\Gamma(M))$ can be viewed as a `refinement' of the `naive' class in $\R\KK^G(M;C_0(M),C_0(T_GM))$ defined by the symbol. 

\subsection{The class $\f_{M,\Gamma}$.}
Recall the trivial bundle $\g_M=M \times \g$ and the anchor map $\rho \colon \g_M \rightarrow TM$ describing the vector fields generated by the $G$-action.  We now fix a $G$-invariant metric $(-,-)_{\g_M}$ on the bundle $\g_M$ such that $g(\rho(\beta),\rho(\beta))\le (\beta,\beta)_{\g_M}$.  Using the metrics on $\g_M$, $TM$ the anchor $\rho$ has a transpose
\[ \rho^\top \colon TM \rightarrow \g_M.\]
\begin{definition}
\label{def:varphi}
We define a smooth bundle map $\varphi \colon TM \rightarrow TM$ to be the composition $\varphi=\rho\circ \rho^\top$.
\end{definition}
\begin{remark}
The range of $\varphi$ is contained in $\Gamma \subset TM$, and $\varphi$ is, roughly speaking, a smooth version of fibre-wise orthogonal projection $T_mM \rightarrow \Gamma_m$.  For simplicity suppose the metric on $\g_M$ is constant.  Let $\beta^1,...,\beta^{\dim(\g)}$ be an orthonormal basis of $\g$, and $\beta^1_M=\rho(\beta^1),...,\beta^{\dim(\g)}_M=\rho(\beta^{\dim(\g)})$ the corresponding vector fields on $M$.  Let $X$ be a vector field.  Then
\[ \varphi(X)=\sum_{j=1}^{\dim(\g)}g(X,\beta^j_M)\beta^j_M.\]
If the action of $G$ is free, then $\varphi(X)$ is, to a first approximation, the projection of $X$ to the orbit directions (with some re-scaling of its components).  At the other extreme, in a neighborhood of an isolated fixed point, the length $|\beta^j_M|$ is $O(r)$ where $r$ is the distance to the fixed-point, and consequently the length $|\varphi(X)|$ is $O(r^2)$ (the typical example would be the vector field $r\partial_\theta$ in $\bR^2$).
\end{remark}

Having defined the map $\varphi$, we may state another useful description of the symbol algebra $\frak{S}_\Gamma(M)$, which interprets its elements as symbols having negative order in the transverse directions:

\begin{lemma}[\cite{KasparovTransversallyElliptic}, Definition-Lemma 6.2]
\label{lem:symbolest}
Under item \emph{(a)} in the definition of $\frak{S}_\Gamma(M)$ above, item \emph{(b)} is equivalent to the following estimate: for any $\varepsilon >0$ there exists a constant $c_\varepsilon>0$ such that
\[ |b(m,\xi)|\le c_\varepsilon \frac{\la \varphi_m(\xi)\ra^2}{\la \xi \ra^2}+\varepsilon, \qquad \forall m \in M, \xi \in T_mM.\]
\end{lemma}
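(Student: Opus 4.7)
The statement is an equivalence, and I would treat the two implications separately: the direction (b)$\Rightarrow$estimate is the substantive one and uses condition (a) in an essential way, while estimate$\Rightarrow$(b) is essentially a tautology. For the easy direction, note that $\varphi=\rho\circ\rho^\top$ is self-adjoint and positive semi-definite with $\ker\varphi=\ker\rho^\top=\Gamma^\perp\simeq T_GM$, so for any $\xi\in T_GM|_m$ one has $\varphi_m(\xi)=0$ and the estimate collapses to $|b(m,\xi)|\le c_\varepsilon/\la\xi\ra^2+\varepsilon$; combined with the compact $m$-support of $b$ and the arbitrariness of $\varepsilon$, this immediately gives $b|_{T_GM}\in C_0(T_GM)$.

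For the hard direction I would argue by contradiction. Suppose the estimate fails for some $\varepsilon>0$: there is a sequence $(m_n,\xi_n)\in TM$ with
\[ |b(m_n,\xi_n)|>n\cdot\la\varphi(\xi_n)\ra^2/\la\xi_n\ra^2+\varepsilon. \]
Boundedness of $b$ (built into the definition of $\S_\Gamma(M)$) forces $\la\varphi(\xi_n)\ra^2/\la\xi_n\ra^2\to 0$, whence $|\xi_n|\to\infty$ and $|\varphi(\xi_n)|/|\xi_n|\to 0$. Using compactness of the $m$-support of $b$, extract a subsequence with $m_n\to m_\infty$; in a local chart near $m_\infty$ trivialising $TM$, pass to a further subsequence so that $\xi_n/|\xi_n|\to u_\infty$, a unit vector. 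Smoothness of $\varphi$ then yields $\varphi_{m_\infty}(u_\infty)=0$, so $u_\infty\in\ker\varphi_{m_\infty}\simeq T_GM|_{m_\infty}$.

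In the same chart, let $\Pi$ denote the orthogonal projection onto the fixed subspace $T_GM|_{m_\infty}$ and decompose $\xi_n=\eta_n+\zeta_n$ with $\eta_n=\Pi\xi_n$ and $\zeta_n\perp\eta_n$. Since $\xi_n/|\xi_n|\to u_\infty\in T_GM|_{m_\infty}$, one has $|\zeta_n|/|\xi_n|\to 0$, hence $|\eta_n|\to\infty$ and $|\zeta_n|/(1+|\eta_n|)\to 0$. Applying the mean value theorem along the segment from $\eta_n$ to $\xi_n$, together with the orthogonality bound $|\eta_n+t\zeta_n|\ge|\eta_n|$ and the estimate $|d_\xi b|\le C/(1+|\xi|)$ from (a), one obtains $|b(m_\infty,\xi_n)-b(m_\infty,\eta_n)|\le C|\zeta_n|/(1+|\eta_n|)\to 0$. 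Since $\eta_n\in T_GM|_{m_\infty}$ with $|\eta_n|\to\infty$, hypothesis (b) gives $b(m_\infty,\eta_n)\to 0$; uniform boundedness of $d_mb$ in (a) yields $|b(m_n,\xi_n)-b(m_\infty,\xi_n)|\le\|d_mb\|_\infty\cdot d(m_n,m_\infty)\to 0$. Combining the three pieces, $|b(m_n,\xi_n)|\to 0$, contradicting the standing hypothesis.

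The main subtlety is that $T_GM$ is not a smooth subbundle of $TM$: its fibre dimension is only upper semi-continuous and jumps up at singular orbits. This prevents the use of smooth local frames for $T_GM$ near $m_\infty$, which is why I project onto the fixed subspace $T_GM|_{m_\infty}$ in a chart (so $\eta_n$ need not lie in $T_GM|_{m_n}$) and then use the uniform Lipschitz-in-$m$ control from (a) to transfer the $C_0$ behaviour of $b|_{T_GM|_{m_\infty}}$ to the nearby basepoints $m_n$.
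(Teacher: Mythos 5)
The paper does not prove this lemma itself: it is cited verbatim as Definition--Lemma 6.2 in Kasparov's paper \cite{KasparovTransversallyElliptic}, so there is no internal proof to compare against. Your proof is, however, correct, and it uses all three hypotheses (the decay of $d_\xi b$, the uniform bound on $d_m b$, and the $C_0$ condition on $T_GM$) in the places they are genuinely needed. The contradiction argument is well set up: the sequence forces $|\xi_n|\to\infty$ and $|\varphi(\xi_n)|/|\xi_n|\to 0$, so any subsequential limit direction lies in $\ker\varphi_{m_\infty}=T_GM|_{m_\infty}$, and the three-term telescoping estimate $b(m_n,\xi_n)\to b(m_\infty,\xi_n)\to b(m_\infty,\eta_n)\to 0$ closes the argument. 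The key point you correctly identified and handled --- that $T_GM$ is not a smooth subbundle since $\dim T_GM|_m$ jumps up at singular orbits --- is precisely what makes the lemma non-trivial, and your device of projecting onto the \emph{fixed} subspace $T_GM|_{m_\infty}$ (rather than the varying $T_GM|_{m_n}$) and then paying for the basepoint change with the uniform bound on $d_mb$ is the right move. The orthogonality bound $|\eta_n+t\zeta_n|\ge|\eta_n|$ combined with $|d_\xi b|\le C(1+|\xi|)^{-1}$ is exactly what one needs to control the segment integral despite $|\xi_n|\to\infty$. One minor point worth being explicit about: the unit-sphere compactness, the orthogonal projection $\Pi$, and the bound $|\eta_n+t\zeta_n|\ge|\eta_n|$ all depend on a fixed inner product in the local trivialization (say $g_{m_\infty}$), while the metric at $m_n$ is only uniformly comparable to it; this costs a harmless multiplicative constant in the estimates and does not affect the conclusion. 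The easy direction is handled cleanly; your observation that it does not use (a) and reduces to the collapse of the estimate on $T_GM$ (where $\varphi_m(\xi)=0$) is correct.
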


The following definition is one of the main reasons to introduce the symbol algebra $\frak{S}_{\Gamma}(M)$.

\begin{definition} \label{def:fMgam} \cite[pp.1344--1345]{KasparovTransversallyElliptic}
The element $[\f_{M,\Gamma}]\in \R\KK^G(M;\S_\Gamma(M),\Cl_\Gamma(TM))$ is the class represented by the pair $(\Cl_\Gamma(TM),\f_{M,\Gamma})$ where at a point $(m,\xi) \in T_mM$, the operator $\f_{M,\Gamma}(m,\xi)$ is left Clifford multiplication by $-\i \varphi_m(\xi)\la \varphi_m(\xi)\ra^{-1}$. (Recall from Definition \ref{def:transdol} that $\Cl_\Gamma(TM):=C_0(TM)\otimes_{C_0(M)}\Cl_\Gamma(M)$. For $\varphi \in C^\infty(M,\End(TM))$ see Definition \ref{def:varphi}.)
\end{definition}
Note that for $b \in \S_\Gamma(M)$, the estimate in Lemma \ref{lem:symbolest} shows that the product $b(m,\xi)(1-\f_{M,\Gamma}(m,\xi)^2)=b(m,\xi)\la \varphi_m(\xi)\ra^{-2}$ belongs to $C_0(TM)\subset \Cl_\Gamma(TM)=\mathcal{K}_{\Cl_\Gamma(TM)}(\Cl_\Gamma(TM))$. Hence the pair $(\Cl_\Gamma(TM),\f_{M,\Gamma})$ does define a KK-cycle.

\begin{remark} The class $[\f_{M,\Gamma}]$ should be viewed as the symbol class of the orbital Dirac element sketched in Section \ref{subsection:Significance}. On the other hand, it implements a KK-equivalence between $\S_\Gamma(M)$ and $\Cl_\Gamma(TM)$.
\end{remark}

\subsection{Kasparov's index theorem for transversally elliptic operators.}
Let $X$ be a compact Riemannian manifold equipped with an isometric action of the compact Lie group $G$.  Let $A$ be a $G$-equivariant, odd, self-adjoint order-$0$ pseudodifferential operator acting on sections of a $\bZ_2$-graded Hermitian vector bundle $E$.  Suppose the symbol $\sigma_A$ is transversally elliptic in the sense of Definition \ref{def:transell}. Then
\begin{itemize}
\item The symbol determines a class $[\sigma_A]\in \R\KK(X;C_0(X),\S_\Gamma(X))$.
\item The pair $(L^2(X,E),A)$ determines a class $[A] \in \KK(G\ltimes C_0(X),\bC)$, and moreover $\index(A) \in R^{-\infty}(G)\simeq \K^0(C^\ast(G))$ is the push-forward of $[A]$ under the map $p\colon X \rightarrow \pt$ (\cite{JulgTransEll}, \cite[Proposition 6.4]{KasparovTransversallyElliptic}). Indeed to demonstrate the latter point, recall $C^*(G)=\oplus_{\pi \in \wh{G}} \End(V_\pi)$
by the Peter-Weyl theorem. The projection $e_\pi \in C^*(G)$ corresponding to the summand $\End(V_\pi)$ determines a K-theory class $[e_\pi]\in K_0(C^*(G))$, and by definition the index pairing $\pair{[e_\pi]}{p_\ast [A]}=\index(e_\pi Ae_\pi)=\index(A_\pi)$, compatible with \eqref{e:AtiyahDef}.
\end{itemize}
Kasparov's index theorem relates these two KK-theory classes.  To state it, it is convenient to introduce a variant of the symbol class.
\begin{definition}[\cite{KasparovTransversallyElliptic}, Definition 8.13]
\label{def:tcl}
Recall the element $[\f_{X,\Gamma}]\in \R\KK^G(X;\S_\Gamma(X),\Cl_\Gamma(TX))$ introduced in Definition \ref{def:fMgam}. The \emph{tangent Clifford symbol class} $[\sigma_A^{\tn{tcl}}]$ is the $\KK$-product
\[ [\sigma_A^{\tn{tcl}}]=[\sigma_A]\wh{\otimes}_{\S_\Gamma(X)}[\f_{X,\Gamma}] \in \R\KK^G(X;C_0(X),\Cl_\Gamma(TX)).\]
In the sequel it will be convenient to use similar notation in a slightly broader context: if $[\omega] \in \KK^G(\scr{A},\S_\Gamma(X))$ (or $\R\KK$) for some $C^*$-algebra $\scr{A}$, then we will write $[\omega^{\tn{tcl}}]$ as shorthand for the product $[\omega]\wh{\otimes}_{\S_\Gamma(X)}[\f_{X,\Gamma}] \in \KK^G(\scr{A},\Cl_\Gamma(TX))$.
\end{definition}
Kasparov provides (see the paragraph following \cite[Definition 8.13]{KasparovTransversallyElliptic}) the following explicit cycle $(\E_{TX,\Gamma},S_{TX,\Gamma})$ representing the class $[\sigma_A^{\tn{tcl}}]$: the Hilbert module is the tensor product
\[ \E_{TX,\Gamma}=C_0(TX,\pi_{TX}^\ast E)\wh{\otimes}_{C_0(TX)}\Cl_\Gamma(TX) \]
and the operator $S_{TX,\Gamma}$ is
\begin{equation} 
\label{eqn:tcl}
N_1^{1/2}(\sigma_A\wh{\otimes} 1)+N_2^{1/2}(1\wh{\otimes} \f_{X,\Gamma})
\end{equation}
where the weights $N_1,N_2=1-N_1 \in C^\infty_b(TX)$ take the form
\begin{equation}
\label{eqn:weights}
N_1(x,\xi)=\frac{\la\xi\ra^2}{\la\xi\ra^2+\la \varphi_x(\xi)\ra^3},\qquad \qquad N_2(x,\xi)=\frac{\la \varphi_x(\xi)\ra^3}{\la\xi\ra^2+\la \varphi_x(\xi)\ra^3}.
\end{equation}
For later use, we note that the weights $N_1$, $N_2$ are chosen according to Kasparov's technical theorem, and have the following important properties: 
\begin{lemma}\label{lem:decayN1}
$N_1^{1/2}\cdot \S_\Gamma(X)\subset C_0(TX)$ and $N_2(1-\f_{X,\Gamma}^2) \in C_0(TX)$.
\end{lemma}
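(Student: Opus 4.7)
The plan is to reduce both assertions to elementary scalar asymptotic estimates on $TX$. Since multiplication by $N_1^{1/2}$, resp. $N_2$, is a bounded operator on $C_b(TX)$ of norm at most $1$, and $C_0(TX)$ is closed in $C_b(TX)$, for the first assertion it suffices to treat $b$ in the dense generating subalgebra of $\S_\Gamma(X)$ defined by properties (1) and (2) of Definition \ref{def:symbalg}. For the second assertion I would first compute $1-\f_{X,\Gamma}^2$ explicitly: with $\f_{X,\Gamma}(x,\xi)=-\i\,\c\bigl(\varphi_x(\xi)/\la\varphi_x(\xi)\ra\bigr)$ and the Clifford convention $\c(v)^2=-|v|^2$, one gets $\f_{X,\Gamma}(x,\xi)^2=|\varphi_x(\xi)|^2/\la\varphi_x(\xi)\ra^2=1-\la\varphi_x(\xi)\ra^{-2}$, so $N_2(1-\f_{X,\Gamma}^2)$ reduces to the scalar function
\[ F(x,\xi)=\frac{\la\varphi_x(\xi)\ra}{\la\xi\ra^{2}+\la\varphi_x(\xi)\ra^{3}}. \]

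For the first assertion I would invoke Lemma \ref{lem:symbolest}, which produces for each $\varepsilon>0$ a constant $c_\varepsilon$ with $|b(x,\xi)|\le c_\varepsilon\,\la\varphi_x(\xi)\ra^{2}/\la\xi\ra^{2}+\varepsilon$. Since $N_1^{1/2}\le 1$, the term $\varepsilon\,N_1^{1/2}$ is uniformly bounded by $\varepsilon$, and what remains is to control
\[ c_\varepsilon\,\frac{\la\varphi_x(\xi)\ra^{2}}{\la\xi\ra\,(\la\xi\ra^{2}+\la\varphi_x(\xi)\ra^{3})^{1/2}}. \]
Both assertions then reduce to the same two-variable asymptotic. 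Compactness of $X$ and continuity of $\varphi$ supply a constant $C'$ with $\la\varphi_x(\xi)\ra\le C'\la\xi\ra$. Setting $u=\la\xi\ra$ and $v=\la\varphi_x(\xi)\ra$, with $v\le C'u$, I would verify
\[ \frac{v^{2}}{u(u^{2}+v^{3})^{1/2}}\longrightarrow 0\quad\text{and}\quad \frac{v}{u^{2}+v^{3}}\longrightarrow 0\quad\text{as }u\to\infty, \]
by splitting the first into the regions $v\le u^{1/2}$ (where $v^2\le u$ gives the bound $u^{-1}$) vs.\ $v\ge u^{1/2}$ (where $(u^{2}+v^{3})^{1/2}\ge v^{3/2}$ gives the bound $v^{1/2}/u\le (C')^{1/2}u^{-1/2}$), and splitting the second analogously at $v=u^{2/3}$.

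The only delicate point is to choose the exponent in the case split so that one simultaneously wins in the regime where the denominator is dominated by $u^{2}$ (equivalently, $v$ is small relative to $u$) and in the regime where it is dominated by $v^{3}$ (equivalently, $v$ is comparable to $u$); the uniform bound $v\le C'u$ from compactness of $X$ is what closes the second regime. Density of the generators in $\S_\Gamma(X)$, together with continuity of multiplication by $N_1^{1/2}$ and closedness of $C_0(TX)$, then upgrades the estimate to the whole algebra for the first claim, while the second follows directly from the explicit formula for $F$.
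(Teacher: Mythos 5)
Your proposal is correct and follows essentially the same route as the paper: invoke Lemma~\ref{lem:symbolest} to control $b$, compute $1-\f_{X,\Gamma}^2=\la\varphi_x(\xi)\ra^{-2}$ explicitly, and reduce to scalar asymptotics in $(\la\xi\ra,\la\varphi_x(\xi)\ra)$. The one place you take a slight detour is in securing the bound $\la\varphi_x(\xi)\ra\le C'\la\xi\ra$: you appeal to compactness of $X$ and continuity of $\varphi$, then run a two-region case split in $(u,v)$. The paper instead exploits the structural normalization built into the construction of $\varphi=\rho\circ\rho^\top$ --- the metric on $\g_M$ is chosen so that $g(\rho(\beta),\rho(\beta))\le(\beta,\beta)_{\g_M}$, which forces $|\varphi_x(\xi)|\le|\xi|$, i.e.\ $C'=1$ even without compactness --- and then the asymptotics close in a single two-line chain $(\la\xi\ra^2+\la\varphi\ra^3)^{1/2}\ge\la\varphi\ra^{3/2}$ followed by $\la\varphi\ra^{1/2}/\la\xi\ra\le\la\xi\ra^{-1/2}$, with no case split. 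This matters because the paper explicitly remarks that the first inclusion is needed later for \emph{non-compact} $X$; there the paper's argument still works (after inserting a compactly supported cutoff $f$ in the $x$-variable coming from the support condition on $b$), whereas your reliance on compactness of $X$ to bound $\|\varphi_x\|$ would not. For the lemma as literally stated, though, $X$ is compact and your argument is sound.
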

In fact, the first inclusion holds even when $X$ is non-compact (this will be used later).  
\ignore{
\begin{proof}
The property of $N_2$ is immediate from
\[ N_2(x,\xi)(1-\f_{X,\Gamma}(x,\xi)^2)=\frac{\la\varphi_x(\xi)\ra}{\la\xi\ra^2+\la\varphi_x(\xi)\ra^3}\le \frac{\la\varphi_x(\xi)\ra}{\la\xi\ra^2}\le \frac{1}{\la\xi\ra}.\] 
We turn to the property of $N_1$, and we no longer assume $X$ is compact.  It suffices to check $N_1^{1/2}\cdot b \in C_0(TX)$ for $b$ in a dense subset of $\S_\Gamma(X)$, so in particular we may assume $\supp(b)\subset \pi_{TX}^{-1}(K)$ for a compact $K\subset X$.  Let $f\in C_0(X)$, $f\ge 0$ be equal to $1$ on $K$, so $b=f\cdot b$.  Applying Lemma \ref{lem:symbolest}, we get that for every $\varepsilon>0$ there is a constant $c_\varepsilon$ such that
\begin{align*} 
N_1^{1/2}(x,\xi)|b(x,\xi)|&\le c_\varepsilon f(x)\frac{\la\varphi_x(\xi)\ra^2}{\la\xi\ra(\la\xi\ra^2+\la \varphi(\xi)\ra^3)^{1/2}}+f(x)N_1^{1/2}(x,\xi)\varepsilon \\ 
&\le c_\varepsilon f(x)\frac{\la\varphi_x(\xi)\ra^2}{\la\xi\ra \la \varphi_x(\xi)\ra^{3/2}}+f(x)N_1^{1/2}(x,\xi)\varepsilon\\
&\le c_\varepsilon \frac{f(x)}{\la\xi\ra^{1/2}}+f(x)N_1^{1/2}(x,\xi)\varepsilon.
\end{align*}
In the last expression, the first term is in $C_0(TX)$ while the second term has norm $\le \|f\|_\infty \varepsilon$.  Since we get an estimate like this for any $\varepsilon>0$, it follows that $N_1^{1/2}b \in C_0(TX)$.
\end{proof}
}
The lemma shows that $N_1\big(\mathcal{K}_{\S_\Gamma(X)}(\S_\Gamma(E))\wh{\otimes}1\big) \subset \mathcal{K}_{\Cl_\Gamma(TX)}\big(\S_\Gamma(E)\wh{\otimes}_{\S_\Gamma(X)}\Cl_\Gamma(TX)\big)$ and $N_2(1-\f_{X,\Gamma}^2)\in \mathcal{K}_{\Cl_\Gamma(TX)}\big(\S_\Gamma(E)\wh{\otimes}_{\S_\Gamma(X)}\Cl_\Gamma(TX)\big)$, as in the general construction of the KK-product (cf. the proof of \cite[Theorem 18.4.3]{Blackadar}).  It follows from standard arguments that $(\E_{TX,\Gamma},S_{TX,\Gamma})$ is indeed a cycle representing $[\sigma_A^{\tn{tcl}}]$. It is not hard and rather instructive to check this fact together with the previous lemma by hand. \\ 

With these preparations, we can finally state Kasparov's index theorem for transversally elliptic operators.
\begin{theorem}[\cite{KasparovTransversallyElliptic}, Theorem 8.18]\label{thm:KaspInd}
Let $A$ be a transversally elliptic operator on a compact Riemannian $G$-manifold $X$.  Then,
\ignore{Let $X$ be a compact Riemannian $G$-manifold, and let $A$ be a transversally elliptic operator\ignore{ satisfying the conditions stated at the beginning of this section}.  Then}
\[ [A]=j^G([\sigma_A^{\tn{tcl}}])\wh{\otimes}_{G\ltimes \Cl_\Gamma(TX)}[\ol{\partial}^{\cl}_{TX,\Gamma}] \in \KK(G\ltimes C_0(X),\bC).\]
\end{theorem}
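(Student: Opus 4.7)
The plan is to apply the Connes--Skandalis criterion (Theorem~\ref{KK def}) by building an explicit cycle that represents the KK-product on the right-hand side, and then showing it is operator homotopic to $(L^2(X,E), A)$. A first move is to unfold $[\ol{\partial}^{\cl}_{TX,\Gamma}]$ via \eqref{eqn:DolDeRham} so that the right-hand side becomes a triple product in which the outermost factor is the transverse de Rham class $[d_{X,\Gamma}]$. Starting from the explicit representative $(\mathcal{E}_{TX,\Gamma}, S_{TX,\Gamma})$ of $[\sigma_A^{\tn{tcl}}]$ given by \eqref{eqn:tcl}, one interpolates $S_{TX,\Gamma}$ with the family Dirac operator from $[d_\xi]$ along the fibres of $TX$ and with the de Rham-type operator on $X$ supplied by $[d_{X,\Gamma}]$, using weights of Kasparov's technical theorem in the spirit of \eqref{eqn:weights}, on a Hilbert module that ultimately reduces to $L^2(X,E)$.

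Next, the three conditions of Theorem~\ref{KK def} can be checked by arguments parallel to those in Theorems~\ref{thm:FundClass} and~\ref{thm:KK-product}. Compactness of commutators $[F,a]$ for $a \in G \ltimes C_0(X)$ and the $F$-connection condition follow using Cauchy integral representations, with Lemma~\ref{lem:decayN1} controlling the behaviour of the weights at infinity in the fibres of $TX$, and Rellich-type compactness handling the base direction. The main obstacle is the positivity condition: here the weights must be tuned so that the transverse ellipticity hypothesis (which yields $1-\sigma_A^2 \in \S_\Gamma(X)$ on the compact manifold $X$, whence $N_1(1-\sigma_A^2) \in C_0(TX)$ by Lemma~\ref{lem:decayN1}) combines with the positivity of $1-\f_{X,\Gamma}^2$ off the zero section of $TX$ to give positivity modulo compacts on the final Hilbert module.

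As a conceptual check, note that both sides depend on $A$ only through $[\sigma_A] \in \R\KK^G(X;C_0(X),\S_\Gamma(X))$: the right-hand side by functoriality of the KK-product, the left-hand side by Atiyah's homotopy invariance of the transversally elliptic index. In the special case where $A$ is the bounded transform of a Dirac operator $\st{D}$, the identity reduces through the KK-equivalence implemented by $[\f_{X,\Gamma}]$ to the factorization \eqref{eqn:DiracDol}, which has already been established. This special case provides both a sanity check and the template for the weight choices needed in the general verification of positivity, which is where the main technical work lies.
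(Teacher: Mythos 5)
The paper does not prove this statement: Theorem~\ref{thm:KaspInd} is quoted verbatim from Kasparov (\cite[Theorem~8.18]{KasparovTransversallyElliptic}) and treated throughout Section~\ref{Section4} as an external input, so there is no ``paper's own proof'' to compare against. What you are offering is a reconstruction of Kasparov's argument, and it has to be assessed on those terms.

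As such, your plan is an outline rather than a proof, and the step you acknowledge as the hardest --- verifying the positivity condition of the Connes--Skandalis criterion after interpolating the three factors --- is precisely where all the content of the theorem is concentrated. You have not indicated how that positivity would actually be established, and a direct Connes--Skandalis verification of a right--hand side like $j^G([\sigma_A^{\tn{tcl}}])\wh{\otimes}[\ol{\partial}^{\cl}_{TX,\Gamma}]$ amounts to re-proving Kasparov's Poincar\'e duality inline: Kasparov does not argue this way but instead proves a duality theorem (showing that pairing with $[\ol{\partial}^{\cl}_{TX,\Gamma}]$ is an isomorphism with an explicit inverse built from a dual-Dirac element) and then identifies $[A]$ with its image by a separate computation. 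The scaffolding you set up (Cauchy integral formulas, Rellich-type compactness, Lemma~\ref{lem:decayN1} for the weights) is appropriate for the first two Connes--Skandalis conditions, but those are the routine ones; the essential positivity estimate on the fibrewise harmonic-oscillator-type operator is not supplied.

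A second, smaller issue: your ``conceptual check'' does not land where you say it does. Equation~\eqref{eqn:DiracDol} is an identity in $\KK(G\ltimes\Cl_\Gamma(M),\bC)$ for the \emph{transverse} Dirac class $[\st{D}_{M,\Gamma}]$ and involves $[\sigma_M]\wh{\otimes}_M 1_{\Cl_\Gamma(M)}\in\R\KK^G(M;\Cl_\Gamma(M),\Cl_\Gamma(TM))$, whereas the Dirac specialization of Theorem~\ref{thm:KaspInd} is an identity in $\KK(G\ltimes C_0(X),\bC)$ for the class $[F]$ of the bounded transform and involves $[\sigma_F^{\tn{tcl}}]\in\R\KK^G(X;C_0(X),\Cl_\Gamma(TX))$. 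Relating the two requires an additional duality identifying the $C_0(X)$-picture with the $\Cl_\Gamma(X)$-picture (which is essentially the content of Proposition~\ref{thm:rotation} together with Theorem~\ref{thm:Significance}), not merely the KK-equivalence $[\f_{X,\Gamma}]$. So the sanity check, as phrased, conflates two distinct statements and does not by itself give the template you claim for the general positivity verification.
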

\begin{remark}
Kasparov gives several other variants of the index theorem, but this version is best suited to our purposes. Moreover, his theorem still applies if $X$ and $G$ are non-compact, as long as $G$ acts properly and isometrically on $X$. We will only need the compact case.
\end{remark}

\ignore{\begin{remark}
One motivation for calling this an `index theorem' is by analogy with the elliptic case, where Kasparov explains that the pairing of the ordinary symbol $[\sigma_A]$ with the Dolbeault-Dirac operator $[\ol{\partial}_{TX}]$ coincides with the topological index of $[\sigma_A]$ in the sense of Atiyah and Singer, cf. \cite[Remarks 4.4, 4.5]{KasparovTransversallyElliptic}.  
\end{remark}}

\subsection{Transversally elliptic symbols on open manifolds.}\label{sec:transellopen}
Atiyah \cite{Atiyah} (see also \cite[Section 3]{ParadanVergne}) defined a distributional index more generally for any element $\alpha_M \in \K^0_G(T_GM)$ where $M$ is a not-necessarily compact Riemannian $G$-manifold.  The construction proceeds as follows.  Atiyah proves \cite[Lemma 3.6]{Atiyah} that one can find a $\bZ_2$-graded Hermitian vector bundle $E=E_0\oplus E_1$ on $M$ and $\sigma_M \in C_b(TM,\pi_{TM}^\ast \End(E))$ an odd, self-adjoint bundle endomorphism  whose restriction to $T_GM$ represents the class $\alpha$, and such that one has $\sigma_M^2=1$ outside $\pi_{TM}^{-1}(K)$ for a $G$-invariant compact subset $K$ of $M$.  Choose a Hermitian vector bundle $F \rightarrow M$ such that $\ti{E}_0=E_0\oplus F$ is trivial, and fix a trivialization.  Let $\ti{E}_1=E_1\oplus F$ and $\ti{\sigma}_M=\sigma_M\oplus \id_F$.  Via $\ti{\sigma}_M$ we obtain a trivialization of $(E_1\oplus F)|_{M\setminus K}$.  Choose a relatively compact $G$-invariant open neighborhood $U$ of $K$, and let $\iota_{U,M}\colon U \hookrightarrow M$ be the inclusion; we will use the same symbol for the induced open inclusion $T_GU\hookrightarrow T_GM$.  The pair $(\ti{E}|_U,\ti{\sigma}_M|_U)$ represents a class $\alpha_U\in \K^0_G(T_GU)$ and $\alpha_M=(\iota_{U,M})_\ast \alpha_U$ by construction.  Choose a $G$-equivariant open embedding $\iota_{U,X}$ of $U$ into a compact $G$-manifold $X$; again we use the same symbol for the induced open inclusion $T_GU\hookrightarrow T_GX$.  Using the trivializations over $U\setminus K$, the bundle $\ti{E}|_U$ and endomorphism $\ti{\sigma}_M|_U$ can be extended trivially to $X$ (denoted $\ti{E}_X$, $\ti{\sigma}_X$ respectively) and represent the class $\alpha_X=(\iota_{U,X})_\ast \alpha_U \in \K^0_G(T_GX)$.  Atiyah defines
\[ \index(\alpha_M)=\index(A_X) \in R^{-\infty}(G) \]
where $A_X$ is any transversally elliptic operator on $X$ such that the (naive) K-theory class of its symbol is $\alpha_X$.  Atiyah proves an excision property  \cite[Theorem 3.7]{Atiyah} showing that the index can be determined just from data on $U$, and hence the construction is independent of the various choices.\\

We can reformulate this construction and Atiyah's excision result in the language of Theorem \ref{thm:KaspInd}: suppose that one manages to choose $\sigma_M$ such that, in addition to the conditions above, one has $(1-\sigma_M^2)\in \S_\Gamma(M)$.  Then $\sigma_M$ determines a class $[\sigma_{M,\tn{c}}]=[(\S_\Gamma(E),\sigma_M)]\in \KK^G(\bC,\S_\Gamma(M))$ refining the class $\alpha_M$.  The subscript `c' is to emphasize that this is a K-theory class whose support is compact over $M$, in contrast with the symbols defining elements of the group $\R\KK^G(M;C_0(M),\S_\Gamma(M))$ that were considered in Section \ref{sec:symbalg}.  One then obtains similar classes $[\ti{\sigma}_{U,\tn{c}}]=[(\S_\Gamma(\ti{E}|_U),\ti{\sigma}|_U)]\in \KK^G(\bC,\S_\Gamma(U))$ refining $\alpha_U$, $[\ti{\sigma}_{X,\tn{c}}]\in [(\S_\Gamma(\ti{E}_X),\ti{\sigma}_X)]\in \KK^G(\bC,\S_\Gamma(X))$ refining $\alpha_X$, and moreover
\begin{equation} 
\label{eqn:functorinclusion}
[\ti{\sigma}_{X,\tn{c}}]=(\iota_{U,X})_\ast[\ti{\sigma}_{U,\tn{c}}], \qquad [\sigma_{M,\tn{c}}]=(\iota_{U,M})_\ast[\ti{\sigma}_{U,\tn{c}}].
\end{equation}
Let $[\ti{\sigma}_{X,\tn{c}}^{\tn{tcl}}]$, $[\ti{\sigma}_{U,\tn{c}}^{\tn{tcl}}]$, $[\sigma_{M,\tn{c}}^{\tn{tcl}}]$ be the corresponding tangential Clifford symbols obtained by KK-product with $\f_{X,\Gamma}$, $\f_{U,\Gamma}$, $\f_{M,\Gamma}$ respectively.  Functoriality of the classes $\f_{-,\Gamma}$ under open embeddings implies the tangential Clifford symbols satisfy analogous formulae to \eqref{eqn:functorinclusion}.\\

Let $p \colon X \rightarrow \pt$ be the collapse map, and $[\sigma_{A,X}]\in \R\KK^G(X;C(X),\S_\Gamma(X))$ the class defined by the symbol of $A_X$, so that $p_\ast[\sigma_{A,X}]=[\ti{\sigma}_{X,\tn{c}}]$.  By Theorem \ref{thm:KaspInd},
\[ \index(A_X)=p_\ast[A_X]=j^G([\ti{\sigma}_{X,\tn{c}}^{\tn{tcl}}])\wh{\otimes}_{G\ltimes \Cl_\Gamma(TX)}[\ol{\partial}^{\cl}_{TX,\Gamma}].\]
Equations \eqref{eqn:functorinclusion}, as well as the functoriality of the KK-product and of the transverse Dolbeault class, give the equivalent formulae
\[ \index(A_X)=j^G([\ti{\sigma}_{U,\tn{c}}^{\tn{tcl}}])\wh{\otimes}_{G\ltimes \Cl_\Gamma(TU)}[\ol{\partial}_{TU,\Gamma}]=j^G([\sigma_{M,\tn{c}}^{\tn{tcl}}])\wh{\otimes}_{G\ltimes \Cl_\Gamma(TM)}[\ol{\partial}^{\cl}_{TM,\Gamma}].\]
We thus obtain the following formula for the index in Atiyah's sense of $\alpha_M=\iota_{T_GM}^\ast[\sigma_{M,\tn{c}}]\in \K^0_G(T_GM)$:
\begin{equation}
\label{eqn:indexopenmfld}
\index(\iota_{T_GM}^\ast[\sigma_{M,\tn{c}}])=j^G([\sigma_{M,\tn{c}}^{\tn{tcl}}])\wh{\otimes}_{G\ltimes \Cl_\Gamma(TM)}[\ol{\partial}^{\cl}_{TM,\Gamma}].
\end{equation}

\subsection{Proof of Theorem \ref{thm:BravInd} (beginning)}\label{sec:BravermanIndex}
The discussion of Sections 4.2--4.5 applies in a more general setting where $E$ is a $\bZ_2$-graded Hermitian vector bundle and $A$ is a transversally elliptic operator. With the aim of proving Theorem \ref{thm:BravInd}, we now return to the setting of interest, where $\nu \colon M \rightarrow \g$ is a taming map with induced vector field $\bm{\!\nu}$ (having a compact vanishing locus), and $\sf{D}_{f\nu}$ is a deformed Dirac operator acting on sections of a $\Cliff(TM)$-module $(E,\c\colon \Cliff(TM)\rightarrow \End(E))$.\\

Using the vector field $\bm{\!\nu}$, define the deformed symbol
\[\sigma_{\!\nu}^0(\xi)=\i\c(\la\xi\ra^{-1}\xi+\bm{\!\nu}), \quad \xi \in TM\simeq T^*M.\]
Since $\bm{\!\nu}$ is a section of $\Gamma$, the support $\supp(\sigma_{\!\nu}^0)\cap T_GM=\{(x,0)\in TM \, : \, \bm{\!\nu}(x)=0\}$.  By assumption the vanishing locus of $\bm{\!\nu}$ is compact, hence the pair $(C_0(T_GM,\pi_{T_GM}^\ast E),\sigma_{\!\nu}^0)$ represents an element $\alpha_{\!\nu}=[\sigma_{\!\nu}^0] \in \K_G^0(T_GM)$, and so has a distributional index. Our goal is to prove Theorem \ref{thm:BravInd}, which states that $\index(\st{D}_{f\!\nu})=\index(\alpha_{\!\nu})$; we will deduce this result as a consequence of the KK-product factorization (Theorem \ref{thm:KK-product}) and Kasparov's index theorem \ref{thm:KaspInd}.\\

As a first step, let us re-write the right-hand-side of Theorem \ref{thm:BravInd} in the language of Section \ref{sec:transellopen}.  Recall that we assumed $|\bm{\!\nu}|\le 1$, with equality outside a $G$-invariant relatively compact open set $U\subset M$.  Define
\begin{equation} 
\label{e:sigmanu}
\sigma_{\!\nu}(\xi)=\i \c\big((1-|\bm{\!\nu}|^2)^{1/2}\la\xi\ra^{-1}\xi+\bm{\!\nu}\big).
\end{equation}
Note that on the open subset of $M$ where $|\bm{\!\nu}|=1$, $\sigma_{\!\nu}(\xi)$ simplifies to the invertible bundle endomorphism $\i \c(\bm{\!\nu})$ (not depending on the fibre variable $\xi$). It follows that $\supp(\sigma_{\!\nu}^0)\cap T_GM=\supp(\sigma_{\!\nu})\cap T_GM$, and the two symbols are homotopic, the formula for the homotopy being given by the same formula as \eqref{e:sigmanu} except with $(1-|\bm{\!\nu}|^2)^{1/2}$ replaced with $(1-|\bm{\!\nu}|^2)^{1/2}t+(1-t)$, where $t \in [0,1]$.  Thus the symbols $\sigma_{\!\nu}$, $\sigma_{\!\nu}^0$ define the same class $\alpha_{\!\nu} \in \K^0_G(T_GM)$.  Since $(1-|\bm{\!\nu}|^2)$ has compact support, one has $(1-\sigma_{\!\nu}^2)\in \S_\Gamma(M)$.  By the discussion in Section \ref{sec:transellopen}, the pair $(\S_\Gamma(E),\sigma_{\!\nu})$ represents a class $[\sigma_{\!\nu,\tn{c}}]\in \KK^G(\bC,\S_\Gamma(M))$ that refines $\alpha_{\!\nu} \in \K^0_G(T_GM)$. By equation \eqref{eqn:indexopenmfld},
\begin{equation}
\label{eqn:RHS}
\index(\alpha_{\!\nu})=j^G([\sigma_{\!\nu,\tn{c}}^{\tn{tcl}}])\wh{\otimes}_{G\ltimes \Cl_\Gamma(TM)}[\ol{\partial}^{\cl}_{TM,\Gamma}].
\end{equation}

The next subsection explains how to factor out $[\!\nu] \in \KK^G(\bC, \Cl_\Gamma(M))$ in the equation above.

\subsection{The symbol class of the transverse Dirac element}
Recall that the same operator $\st{D}$ defines K-homology classes in two different groups, $[\st{D}_M]\in \K^0_G(C_0(M))$, $[\st{D}_{M,\Gamma}]\in \K^0(G\ltimes \Cl_\Gamma(M))$ (Theorem \ref{thm:FundClass}).  The order-$0$ symbol $\sigma(\xi)=\i \c(\la\xi\ra^{-1}\xi)$ (the symbol of $F=\st{D}(1+\st{D}^2)^{-1/2}$) determines an element $[\sigma_M]=[(C_0(TM,\pi_{TM}^\ast E),\sigma)]\in \R\KK^G(M;C_0(M),C_0(TM))$. On the other hand the analogue of $[\st{D}_{M,\Gamma}]$ at the level of symbols is the class $[\sigma_{M,\Gamma}]\in \R\KK^G(M;\Cl_\Gamma(M),\S_\Gamma(M))$ defined by the pair $(\S_\Gamma(E),\sigma)$.  Recall that the Hilbert $\S_\Gamma(M)$-module $\S_\Gamma(E)$ was defined in Definition \ref{def:symbalg} for an arbitrary $G$-equivariant $\bZ_2$-graded Hermitian vector bundle $E$. In our setting $E$ is also a (left) $\Cliff(TM)$-module bundle, and with this additional structure $\S_\Gamma(E)$ also becomes a (left) $\Cl_\Gamma(M)$-module.   
\begin{lemma}
The pair $(\S_\Gamma(E),\sigma)$ represents a class $[\sigma_{M,\Gamma}]\in \R\KK^G(M;\Cl_\Gamma(M),\S_\Gamma(M))$.
\end{lemma}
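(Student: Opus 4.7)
The plan is to verify the three Fredholm-module axioms for the pair $(\S_\Gamma(E),\sigma)$ defining a class in $\R\KK^G(M;\Cl_\Gamma(M),\S_\Gamma(M))$. First I would check that the left action of $\Cl_\Gamma(M)$ on $\S_\Gamma(E)$ by pointwise Clifford multiplication is well-defined and by adjointable operators: the estimates in Definition \ref{def:symbalg} are purely on the $m$- and $\xi$-dependence, and are preserved under multiplication by a bounded fibrewise endomorphism of $E$ that is independent of $\xi$. Likewise $\sigma=\i\c(\la\xi\ra^{-1}\xi)$ is manifestly odd, self-adjoint, and $\S_\Gamma(M)$-adjointable. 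The $C_0(M)$-linearity needed for the $\R\KK^G(M;-,-)$ setting is automatic since $C_0(M)$ is central in both $\Cl_\Gamma(M)$ and $\S_\Gamma(M)$ and acts on $\S_\Gamma(E)$ by pointwise scalar multiplication, while $G$-equivariance is immediate as both the normalised covector $\xi\mapsto\la\xi\ra^{-1}\xi$ and Clifford multiplication are $G$-equivariant.

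The central algebraic input is the Clifford relation $\{\c(v),\c(w)\}=-2g(v,w)$. It immediately gives $\sigma^2-1=-\la\xi\ra^{-2}\cdot\tn{id}_E$, and for a generator $\c(Y)$ with $Y\in C^\infty_c(M,\Gamma)$ the graded commutator reads
\[
[\sigma,\c(Y)]_{\tn{gr}}(m,\xi)=-2\i\,\la\xi\ra^{-1}g(\xi,Y_m)\cdot\tn{id}_{E_m}.
\]
This is a smooth, $m$-compactly-supported scalar function, uniformly bounded by $|Y|_M$, with $d_m$ bounded uniformly in $\xi$ and $d_\xi$ of size $O(\la\xi\ra^{-1})$. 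The key observation is that since $T_GM=\tn{ann}(\Gamma)$, the pairing $g(\xi,Y_m)$ vanishes identically on $T_GM$; thus the restriction of the above scalar to $T_GM$ is zero, and the function lies in $\S_\Gamma(M)$ by Definition \ref{def:symbalg}. Combined with $[\sigma,f]=0$ for $f\in C_c^\infty(M)$ and a standard graded Leibniz expansion on words in $C_c^\infty(M)$ and $C^\infty_c(M,\Gamma)$, this yields $[\sigma,a]\in\mathcal{K}_{\S_\Gamma(M)}(\S_\Gamma(E))$ for every $a$ in the dense subalgebra $\Cl^\infty_{\Gamma,c}(M)$, and by density and norm-continuity of the commutator in $a$ for every $a\in\Cl_\Gamma(M)$.

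The second axiom is handled by the same mechanism: for $a\in\Cl^\infty_{\Gamma,c}(M)$ the product $a(\sigma^2-1)=-a\la\xi\ra^{-2}$ is multiplication by a smooth, $m$-compactly-supported section of $\End E$ that decays like $\la\xi\ra^{-2}$ in $\xi$, and whose restriction to $T_GM$ visibly lies in $C_0(T_GM)$; it therefore defines a compact operator on $\S_\Gamma(E)$, and density finishes the general case.

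The main obstacle is the bookkeeping required to identify $\mathcal{K}_{\S_\Gamma(M)}(\S_\Gamma(E))$ with the natural $\End E$-valued analogue of the symbol algebra $\S_\Gamma(M)$, so as to justify that the multiplication operators produced above are genuinely compact in the Hilbert-module sense. This is, however, a direct extension of the well-known identification $\mathcal{K}_{C_0(X)}(\Gamma_0(V))\simeq\Gamma_0(\End V)$ for a Hermitian vector bundle over a locally compact space, carried out via a local trivialisation of $E$ and a partition of unity, and is exactly parallel to the arguments already used in \cite[Section 6]{KasparovTransversallyElliptic}; once it is in place the three Fredholm-module conditions all reduce to the vanishing of $g(\xi,Y)$ on $T_GM$ and the decay of $\la\xi\ra^{-2}$.
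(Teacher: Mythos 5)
Your proof is correct and takes essentially the same approach as the paper's: the key computation of the graded commutator $[\sigma,\c(Y)]=-2\i\la\xi\ra^{-1}g(\xi,Y)$ and the observation that this vanishes on $T_GM=\tn{ann}(\Gamma)$ because $Y$ is a section of $\Gamma$ is exactly the paper's argument. The paper is terser—it only addresses the commutator condition, taking the remaining Fredholm-module and $C_0(M)$-linearity axioms for granted (they follow directly from Kasparov's construction of $\S_\Gamma(E)$ earlier in the section), whereas you spell those out; both rely on the inclusion $\S_\Gamma(M)\subset\mathcal{K}_{\S_\Gamma(M)}(\S_\Gamma(E))$ that the paper states without proof.
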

\begin{proof}
The only property that needs to be checked is that graded commutators $[\c(a),\sigma]$, for $a \in \Cl_\Gamma(M)$, lie in $\mathcal{K}_{\S_\Gamma(M)}(\S_\Gamma(E))$.  It suffices to consider $a \in C_c^\infty(M,\Gamma)$.  Then
\begin{equation} 
\label{eqn:comwithsymb}
[\i\c(a),\sigma(\xi)]=2g(a,\xi)\la \xi \ra^{-1}.
\end{equation}
Since $a$ is a section of $\Gamma$, this vanishes identically for $\xi \in T_GM$, so a fortiori its restriction to $T_GM$ lies in $C_0(T_GM)$.  Its differential in the $\xi$-direction satisfies the required estimate, so \eqref{eqn:comwithsymb} is an element of $\S_\Gamma(M)\subset \mathcal{K}_{\S_\Gamma(M)}(\S_\Gamma(E))$.
\end{proof}

The next lemma provides a symbol analogue of the factorization $[\st{D}_{f\!\nu}]=j^G([\!\nu])\wh{\otimes}_{G\ltimes \Cl_\Gamma(M)}[\st{D}_{M,\Gamma}]$ from Theorem \ref{thm:KK-product}. Recall that the class $[\sigma_{\!\nu,\tn{c}}] \in \KK^G(\bC,\S_\Gamma(M))$ was defined in Section \ref{sec:BravermanIndex} (just before equation \eqref{eqn:RHS}).
\begin{lemma}
\label{lem:factorsymb}
$[\sigma_{\!\nu,\tn{c}}]=[\!\nu]\wh{\otimes}_{\Cl_\Gamma(M)}[\sigma_{M,\Gamma}] \in \KK^G(\bC,\S_\Gamma(M))$.
\end{lemma}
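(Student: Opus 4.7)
The plan is to apply the Connes--Skandalis criterion (Theorem \ref{KK def}) to the cycle $(\S_\Gamma(E),\sigma_{\!\nu})$ as a candidate representative of the KK-product on the right-hand side. First, I would identify $\Cl_\Gamma(M)\wh{\otimes}_{\Cl_\Gamma(M)}\S_\Gamma(E)\simeq \S_\Gamma(E)$ via $a\otimes s\mapsto a\cdot s$, using the left Clifford action of $\Cl_\Gamma(M)$ on $\S_\Gamma(E)$ coming from the Clifford module structure of $E$. Under this identification, the lifted operator $\i\c(\bm{\!\nu})\wh{\otimes} 1$ becomes left Clifford multiplication by $\i\c(\bm{\!\nu})$ on $\S_\Gamma(E)$, and the deformed symbol $\sigma_{\!\nu}$ is the natural candidate for the product operator $F$.

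For the Fredholm module condition (a), using $\sigma^2=\la\xi\ra^{-2}|\xi|^2$ and $\{\i\c(\bm{\!\nu}),\sigma\}=2\la\xi\ra^{-1}g(\bm{\!\nu},\xi)$, I would compute
\[ 1-\sigma_{\!\nu}^2=(1-|\bm{\!\nu}|^2)\la\xi\ra^{-2}-2(1-|\bm{\!\nu}|^2)^{1/2}\la\xi\ra^{-1}g(\bm{\!\nu},\xi). \]
Both summands lie in $\S_\Gamma(M)\subset \mathcal{K}_{\S_\Gamma(M)}(\S_\Gamma(E))$: the first has compact $m$-support (via $1-|\bm{\!\nu}|^2$) and decays in $\xi$; the second vanishes on $T_GM$ since $\bm{\!\nu}\in\Gamma$ (so $g(\bm{\!\nu},\xi)=0$ for $\xi\in\Gamma^\perp$) and satisfies the regularity conditions of Definition \ref{def:symbalg}. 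The positivity condition (c) is handled by the analogous computation $\{\i\c(\bm{\!\nu}),\sigma_{\!\nu}\}=2|\bm{\!\nu}|^2+2(1-|\bm{\!\nu}|^2)^{1/2}\la\xi\ra^{-1}g(\bm{\!\nu},\xi)$; the second summand is again in $\S_\Gamma(M)$, so modulo compacts the anticommutator equals $2|\bm{\!\nu}|^2\ge 0$.

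The main obstacle is the $\sigma$-connection condition (b): for $\eta$ in the dense subalgebra of $\Cl_\Gamma(M)$ generated by $f\in C^\infty_c(M)$ and $\c(a)$ with $a\in C^\infty_c(M,\Gamma)$, one must show the graded commutator $\sigma_{\!\nu}\eta-(-1)^{\deg\eta}\eta\sigma$ is compact on $\S_\Gamma(E)$. The key algebraic inputs are $\{\sigma,\c(a)\}=-2\i\la\xi\ra^{-1}g(\xi,a)$, which vanishes on $T_GM$ because $a\in\Gamma$, and the Clifford relation $\{\c(a),\c(\bm{\!\nu})\}=-2g(a,\bm{\!\nu})$. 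Using these, one rewrites the commutator as a sum of pointwise endomorphism-multiplication operators whose symbols have compact support in $m$ and whose restrictions to $T_GM$ lie in $C_0(T_GM,\End(E))$---exactly the conditions ensuring membership in $\mathcal{K}_{\S_\Gamma(M)}(\S_\Gamma(E))$. The essential geometric point is that both $\bm{\!\nu}$ and $a$ are tangent to the $G$-orbits, which is what produces the required cancellations in the $\Gamma^\perp$-directions. Granted (b), Connes--Skandalis identifies $(\S_\Gamma(E),\sigma_{\!\nu})=[\sigma_{\!\nu,\tn{c}}]$ with the KK-product $[\!\nu]\wh{\otimes}_{\Cl_\Gamma(M)}[\sigma_{M,\Gamma}]$, completing the proof.
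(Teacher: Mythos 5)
Your verifications of conditions (a) and (c) of the Connes--Skandalis criterion are essentially correct, and the identification $\Cl_\Gamma(M)\wh{\otimes}_{\Cl_\Gamma(M)}\S_\Gamma(E)\simeq\S_\Gamma(E)$ and of $T_\xi$ with left Clifford multiplication is the right setup. The problem is condition (b): the operator $\sigma_{\!\nu}$ is \emph{not} a $\sigma$-connection, so this part of your argument cannot be completed as sketched. To see the obstruction most simply, take $\eta=f\in C^\infty_c(M)$ (even); then, since $\sigma$ and $\sigma_{\!\nu}$ are both pointwise multiplication operators commuting with $M_f$,
\[ M_f\,\sigma-\sigma_{\!\nu}\,M_f \;=\; M_f\,(\sigma-\sigma_{\!\nu}) \;=\; \i\, f(m)\,\c\big(\big(1-(1-|\bm{\!\nu}|^2)^{1/2}\big)\la\xi\ra^{-1}\xi-\bm{\!\nu}\big). \]
Restricted to $T_GM$ this has pointwise operator norm bounded below by $|f(m)|\,|\bm{\!\nu}(m)|$, uniformly in $\xi$, so it does not lie in $C_0(T_GM,\End(E))$ and hence is not in $\mathcal{K}_{\S_\Gamma(M)}(\S_\Gamma(E))$. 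The same phenomenon occurs for $\eta=\c(a)$: the term $\i\c(\bm{\!\nu})\c(a)$ is constant in $\xi$, and the term involving $(1-(1-|\bm{\!\nu}|^2)^{1/2})\sigma\c(a)$ has unit modulus along the fibres of $T_GM$; neither is absorbed by the cancellations coming from $\{\sigma,\c(a)\}$ or $\{\c(a),\c(\bm{\!\nu})\}$. Equivalently, $\sigma_{\!\nu}-\sigma$ is not a ``$0$-connection'', so $\sigma_{\!\nu}$ cannot inherit the connection property from $\sigma$.

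The paper's proof sidesteps this issue. Rather than asking $\sigma_{\!\nu}$ to be a $\sigma$-connection, it appeals to Kasparov's classical product formula (Blackadar, Prop.~18.10.1): with $F_1=\i\c(\bm{\!\nu})$ on $\Cl_\Gamma(M)$, $F_2=\sigma$ on $\S_\Gamma(E)$, and $G=\sigma$ taken as the $F_2$-connection (that \emph{this} is a connection is exactly the content of the lemma immediately preceding, using that $\{\c(a),\sigma\}$ vanishes on $T_GM$), the hypotheses $\|F_1\|\le 1$ and $\{F_1\wh{\otimes}1,G\}=2g(\bm{\!\nu},\la\xi\ra^{-1}\xi)$ compact (hence trivially positive mod compacts) are met, and the output operator $F_1\wh{\otimes}1+\big((1-F_1^2)^{1/2}\wh{\otimes}1\big)G$ is literally $\sigma_{\!\nu}$ as written in \eqref{e:sigmanu}. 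In short, the correct thing to verify is that $\sigma$ is a connection, not $\sigma_{\!\nu}$; the product formula then produces $\sigma_{\!\nu}$ automatically. Your attempt to run Connes--Skandalis directly with $F=\sigma_{\!\nu}$ stumbles precisely on the feature that makes Kasparov's formula necessary here.
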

\begin{proof}
Equation \eqref{e:sigmanu} defining $\sigma_\nu$ is a classical KK-product formula of Kasparov (cf. \cite[Proposition 18.10.1]{Blackadar}), applied to the classes $[\!\nu]$ and $[\sigma_{M,\Gamma}]$.
\end{proof}

\ignore{\begin{proof}
Let $F_1=\i\c(\bm{\!\nu})$ acting on $\Cl_\Gamma(M)$, and let $F_2=\sigma(\xi)$ acting on $\S_\Gamma(E)$.  Note that $\Cl_\Gamma(M)\wh{\otimes}_{\Cl_\Gamma(M)}\S_\Gamma(E)\simeq \S_\Gamma(E)$.  Under this identification, $\sigma$ can be viewed as an operator $F_2^\prime$ on $\Cl_\Gamma(M)\wh{\otimes}_{\Cl_\Gamma(M)}\S_\Gamma(E)$.  The calculation \eqref{eqn:comwithsymb} shows that $F_2^\prime$ is an $F_2$-connection (in the sense of \cite{Skandalis}).  Recall also that we assumed $|\bm{\!\nu}|\le 1$.  Under these conditions there is a formula for the operator of the KK product (cf. \cite[Proposition 18.10.1]{Blackadar}),
\[ F=F_1\wh{\otimes} 1+\big((1-F_1^2)^{1/2}\wh{\otimes} 1)F_2^\prime, \]
and in our case this coincides with the operator $\sigma_{\!\nu}$.
\end{proof}}

Applying the lemma to equation \eqref{eqn:RHS} we obtain
\begin{equation}
\label{eqn:RHS2}
\index(\alpha_{\!\nu})=j^G([\!\nu]\wh{\otimes}_{\Cl_\Gamma(M)}[\sigma_{M,\Gamma}^{\tn{tcl}}])\wh{\otimes}_{G\ltimes \Cl_\Gamma(TM)}[\ol{\partial}^{\cl}_{TM,\Gamma}].
\end{equation}

\subsection{End of the proof of Theorem \ref{thm:BravInd}.}
By Theorem \ref{thm:KK-product} and equation \eqref{eqn:DiracDol}, one has
\[ [\st{D}_{f\!\nu}]=j^G\big([\!\nu]\wh{\otimes}_{\Cl_\Gamma(M)}([\sigma_M]\wh{\otimes}_{M}1_{\Cl_\Gamma(M)})\big)\wh{\otimes}_{G\ltimes \Cl_\Gamma(TM)}[\ol{\partial}^{\cl}_{TM,\Gamma}].\]
Comparing this to equation \eqref{eqn:RHS2}, we see that the proof of Theorem \ref{thm:BravInd} is completed by the following result, which is the symbol analogue of Theorem \ref{thm:Significance}.
\begin{proposition} \label{thm:rotation}
$[\sigma_M]\wh{\otimes}_{C_0(M)}1_{\Cl_\Gamma(M)}=[\sigma_{M,\Gamma}^{\tn{tcl}}] \in \KK^G(\Cl_\Gamma(M),\Cl_\Gamma(TM))$.
\end{proposition}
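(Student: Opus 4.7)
The plan is to apply the Connes--Skandalis criterion (Theorem~\ref{KK def}), exhibiting a cycle that represents both sides of the claim, together with a rotation homotopy patterned after Kasparov's proof of \cite[Theorem 8.9]{KasparovTransversallyElliptic}. First I would realise both sides on a common Hilbert $\Cl_\Gamma(TM)$-module. The external product $[\sigma_M]\wh{\otimes}_{C_0(M)}1_{\Cl_\Gamma(M)}$ is represented by the cycle $\bigl(C_0(TM,\pi_{TM}^\ast E)\wh{\otimes}_{C_0(M)}\Cl_\Gamma(M),\,\sigma\wh{\otimes}1\bigr)$, in which the left $\Cl_\Gamma(M)$-action is by left multiplication on the second tensor factor. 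The Kasparov product representative \eqref{eqn:tcl} of $[\sigma_{M,\Gamma}]\wh{\otimes}_{\S_\Gamma(M)}[\f_{M,\Gamma}]$ is the cycle $\bigl(\S_\Gamma(E)\wh{\otimes}_{\S_\Gamma(M)}\Cl_\Gamma(TM),\,S\bigr)$ with $S = N_1^{1/2}(\sigma\wh{\otimes}1)+N_2^{1/2}(1\wh{\otimes}\f_{M,\Gamma})$ and left $\Cl_\Gamma(M)$-action given by Clifford multiplication $c_E$ on the $\S_\Gamma(E)$-factor. Both underlying Hilbert modules admit a canonical identification with the Hilbert $\Cl_\Gamma(TM)$-module $H$ obtained as the completion of $C_c(TM,\,\pi_{TM}^\ast E\otimes_M \Cliff(\Gamma))$; under this identification the two cycles differ only in their left $\Cl_\Gamma(M)$-representations, call them $\pi_L$ (left multiplication on the $\Cliff(\Gamma)$-factor) and $\pi_R$ (Clifford multiplication $c_E$ on the $E$-factor).

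Next I would interpolate between the two cycles using the structural observation that $\pi_L$, $\pi_R$ graded-commute on $H$ and each squares to $-|\eta|^2$ for $\eta$ a section of $\Gamma$, so that the family
\[
\pi_t(\eta) \;=\; \cos(t)\,\pi_L(\eta) + \sin(t)\,\pi_R(\eta), \qquad t\in[0,\pi/2],
\]
is a genuine Clifford representation of $\Cl_\Gamma(M)$ on $H$ for every $t$; likewise the odd operators $\sigma\wh{\otimes}1$ and $1\wh{\otimes}\f_{M,\Gamma}$ graded-commute, since they act on disjoint tensor factors. One then seeks a norm-continuous family $(H,\pi_t,F_t)$ of Kasparov cycles, where $F_t$ is a $t$-dependent weighted combination of $\sigma\wh{\otimes}1$ and $1\wh{\otimes}\f_{M,\Gamma}$ with weights $N_1(t),N_2(t)\in C_b(TM)$ satisfying $N_1(t)+N_2(t)=1$ and degenerating at $t=0$ to $(1,0)$ (giving the LHS cycle) and at $t=\pi/2$ to Kasparov's weights \eqref{eqn:weights} (giving the RHS cycle).

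The main obstacle is verifying the KK-cycle axioms uniformly in $t$. The scalar $1-F_t^2 = N_1(t)\la\xi\ra^{-2} + N_2(t)\la\varphi_m(\xi)\ra^{-2}$ can be controlled by precisely the arguments following Lemma~\ref{lem:decayN1}. The delicate point is the compactness of the graded commutator $[\pi_t(a),F_t]$: in the regime $\varphi(\xi)\to\infty$ the individual contributions from $[\pi_L,1\wh{\otimes}\f_{M,\Gamma}]$ and $[\pi_R,\sigma\wh{\otimes}1]$ fail to vanish at infinity on $TM$, and the needed decay can only be secured by exploiting a careful interaction between the weights and the symbol estimate of Lemma~\ref{lem:symbolest}. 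Kasparov's technical theorem provides the means to build such a family of weights, in direct analogy with the construction of \eqref{eqn:weights}. Once this technical step is in place, continuity of the family $F_t$ yields the desired operator homotopy, proving the equality in $\KK^G(\Cl_\Gamma(M),\Cl_\Gamma(TM))$. Alternatively, the argument can be reduced to the special case of the de Rham--Dirac operator treated by Kasparov, by appealing to the factorisation of Proposition~\ref{prop:factorCliffDirac} together with associativity of the KK-product.
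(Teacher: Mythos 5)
Your starting point agrees with the paper: realise both cycles on the common $\Cl_\Gamma(TM)$-module $\mathcal{H}=C_0(TM,\pi_{TM}^\ast E)\wh{\otimes}_{C_0(M)}\Cl_\Gamma(M)$, observe that they differ only in the left $\Cl_\Gamma(M)$-representation (your $\pi_L,\pi_R$ are the paper's $\pi_1,\pi_0$), and connect them by the rotation $\pi_t$.  Where you diverge is in what gets homotoped along with the representation, and there your plan has a genuine gap.  You keep the Clifford operator $1\wh{\otimes}\f_{M,\Gamma}$ \emph{fixed} and propose instead to vary the weights $N_1(t),N_2(t)$.  The problematic term is then
\[
\cos(t)\,N_2(t)^{1/2}\,\bigl[\,1\wh{\otimes}a,\ 1\wh{\otimes}\f_{M,\Gamma}\,\bigr]
= \cos(t)\,N_2(t)^{1/2}\cdot 2\i\,\bigl(a,\varphi_m(\xi)\bigr)\la\varphi_m(\xi)\ra^{-1},
\]
which is a bounded scalar that does \emph{not} vanish as $\xi\to\infty$ over $\supp(a)$ (when $\varphi_m(\xi)$ is comparable to $\xi$).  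Kasparov's weight $N_2$ also tends to $1$ in that regime, so no admissible $N_2(t)\le N_2$ will kill this term when $\cos(t)\neq 0$; and if you force $N_2(t)\equiv 0$ whenever $\cos(t)\neq 0$, the family of operators jumps from $\sigma\wh{\otimes}1$ to $S_{\pi/2}$ without a homotopy.  The appeal to ``Kasparov's technical theorem'' does not repair this: the technical theorem produces weights for a single product cycle, not a $t$-dependent family whose commutators with a rotating representation remain uniformly compact, and the obstruction just identified is intrinsic.

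The paper's device, which your plan omits, is to rotate $\f_{M,\Gamma}$ \emph{simultaneously} with $\pi_t$: one replaces $1\wh{\otimes}\f_{M,\Gamma}$ by
\[
\f_{M,\Gamma,t}(m,\xi)=\sin(\tfrac{\pi}{2}t)\,\i\c\bigl(\la\varphi_m(\xi)\ra^{-1}\varphi_m(\xi)\bigr)\wh{\otimes}1
 -\cos(\tfrac{\pi}{2}t)\,1\wh{\otimes}\i\la\varphi_m(\xi)\ra^{-1}\varphi_m(\xi),
\]
keeping the weights $N_1,N_2$ fixed at Kasparov's values throughout.  The point is that $[\pi_t(a),\f_{M,\Gamma,t}]$ vanishes \emph{identically}: the two cross-terms cancel because the sine and cosine coefficients in $\pi_t$ and $\f_{M,\Gamma,t}$ are aligned, and the diagonal terms vanish since $\c(a)\wh{\otimes}1$ graded-commutes with $1\wh{\otimes}\i\eta$.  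With that in hand the only remaining contribution to $[\pi_t(a),S_t]$ is $N_1^{1/2}[\pi_t(a),\sigma\wh{\otimes}1]$, which is handled by Lemmas~\ref{lem:symbolest} and~\ref{lem:decayN1} exactly as you anticipate; the new scalar cross-term in $1-S_t^2$ is controlled the same way.  Finally, degenerating the weights directly to $(1,0)$ at $t=0$ (as you propose) is also not what the paper does.  After the representation rotation has ended, the paper is left with $(\mathcal{H},\pi_1,S_1)$ versus $(\mathcal{H},\pi_1,\sigma\wh{\otimes}1)$, and it passes between them by the Connes--Skandalis positivity criterion, using that the graded anti-commutator $[\sigma\wh{\otimes}1,S_1]$ is nonnegative because $g(\xi,\varphi_m(\xi))=g(\xi,\rho_m\rho_m^\top\xi)\ge 0$.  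This is a separate step that your single weight-interpolation does not supply.  The suggested ``reduction to the de Rham case via Proposition~\ref{prop:factorCliffDirac}'' is also not available as stated: that proposition factors the \emph{operator} class $[\st{D}_{M,\Gamma}]$, whereas the present statement is about symbol classes, and no symbol-side analogue of Theorem~\ref{thm:Significance} is quoted that one could black-box.
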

\begin{proof}
Note that
\[ \mathcal{H}:=\S_\Gamma(E)\wh{\otimes}_{\S_\Gamma(M)}\Cl_\Gamma(TM)\simeq C_0(TM,\pi_{TM}^\ast E)\wh{\otimes}_{C_0(TM)}\Cl_\Gamma(TM)\simeq C_0(TM,\pi_{TM}^\ast E)\wh{\otimes}_{C_0(M)}\Cl_\Gamma(M) \]
as Hilbert $\Cl_\Gamma(TM)=C_0(TM)\wh{\otimes}_{C_0(M)}\Cl_\Gamma(M)$-modules; thus the KK-elements on the left and right hand sides are naturally represented on the same $\Cl_\Gamma(TM)$-module $\mathcal{H}$.  The representations of $\Cl_\Gamma(M)$ differ however; we denote the representation for $[\sigma_{M,\Gamma}^{\tn{tcl}}]$ (resp. $[\sigma_M]\wh{\otimes}_{C_0(M)}1_{\Cl_\Gamma(M)}$) by $\pi_0$ (resp. $\pi_1$), where for $a \in \Cl_\Gamma(M)$,
\[ \pi_0(a)=\c(a)\wh{\otimes}1, \qquad \pi_1(a)=1\wh{\otimes}a \]
(here $1\wh{\otimes}a$ denotes the operator $e\wh{\otimes}f\mapsto (-1)^{\tn{deg}(e)\tn{deg}(a)}e\wh{\otimes}af$).\\

The operator representing $[\sigma_M]\wh{\otimes}_{M}1_{\Cl_\Gamma(M)}$ is $\sigma(m,\xi)\wh{\otimes}1=\i\c(\la\xi\ra^{-1}\xi)\wh{\otimes}1$.  The operator representing the product
\[ [\sigma_{M,\Gamma}^{\tn{tcl}}]=[\sigma_{M,\Gamma}]\wh{\otimes}_{\S_\Gamma(M)}[\f_{M,\Gamma}]\in \KK^G(\Cl_\Gamma(M),\Cl_\Gamma(TM)),\]
can be taken to be the same as that in \eqref{eqn:tcl}, namely
\[ S_0=N_1^{1/2}(\sigma\wh{\otimes}1)+N_2^{1/2}(1\wh{\otimes}\f_{M,\Gamma})\]
where the weights $N_2=1-N_1 \in C_b(TM)$ are as in equation \eqref{eqn:weights}; indeed the only additional condition that needs to be checked is the compactness of the commutators $[\pi_0(a),S_0]$, and this follows from the observation that for $a \in C_0(M,\Gamma)$ one has $g(a,\la\xi\ra^{-1}\xi)\in \S_\Gamma(M)$, together with Lemma \ref{lem:decayN1}.\\

We perform a `rotation' homotopy simultaneously on the operator $S_0$ and representation $\pi_0$.  For $t \in [0,1]$ let
\[ \pi_t(a)=\cos(\tfrac{\pi}{2} t)\c(a)\wh{\otimes}1+\sin(\tfrac{\pi}{2}t)1\wh{\otimes}a, \qquad S_t=N_1^{1/2}(\sigma\wh{\otimes}1)+N_2^{1/2}\f_{M,\Gamma,t}, \]
where
\[ \f_{M,\Gamma,t}(m,\xi)=\sin(\tfrac{\pi}{2}t)\i\c(\la\varphi_m(\xi)\ra^{-1}\varphi_m(\xi))\wh{\otimes}1-\cos(\tfrac{\pi}{2}t)1\wh{\otimes}\i\la\varphi_m(\xi)\ra^{-1}\varphi_m(\xi).\]
It is clear that $\pi_0$, $\pi_1$, $S_0$ coincide with the previous definitions.  Let us check that this is a homotopy of Kasparov cycles.  The commutator condition for the representation follows because $[\pi_t(a),\f_{M,\Gamma,t}]=0$ for all $a \in \Cl_\Gamma(M)$ and $t \in [0,1]$.  For $f \in C_0(M)$, the function $f(1-S_t^2)$ is the same as $f(1-S_0^2)$ except for an additional cross-term
\begin{equation} 
\label{eqn:crosstermsymbol}
-2\sin(\tfrac{\pi}{2}t)N_1^{1/2}(m,\xi)\cdot N_2^{1/2}(m,\xi)\cdot f(m)\cdot g\big(\tfrac{\xi}{\la\xi\ra},\tfrac{\varphi_m(\xi)}{\la\varphi_m(\xi)\ra}\big). 
\end{equation}
The product $f(m)\cdot g(\la\xi\ra^{-1}\xi,\la\varphi_m(\xi)\ra^{-1}\varphi_m(\xi)) \in \S_\Gamma(M)$.  Since $N_2 \le 1$, Lemma \ref{lem:decayN1} implies that \eqref{eqn:crosstermsymbol} lies in $C_0(TM)$.  From this it follows that $\pi_t(a)(1-S_t^2) \in \mathcal{K}_{\Cl_\Gamma(TM)}(\mathcal{H})$ for all $a \in \Cl_\Gamma(M)$.\\

After the homotopy, the representations of $\Cl_\Gamma(M)$ on $\mathcal{H}$ for the two cycles agree, and we are left with the operator
\[ S_1=N_1^{1/2}(\sigma\wh{\otimes}1)+N_2^{1/2}\f_{M,\Gamma,1}, \qquad \f_{M,\Gamma,1}(m,\xi)=\i\c(\la\varphi_m(\xi)\ra^{-1}\varphi_m(\xi))\wh{\otimes}1.\]
Note that the graded commutator $[\sigma\wh{\otimes}1,\f_{M,\Gamma,1}]$ is the function
\[ \tfrac{2}{\la\xi\ra \la\varphi_m(\xi)\ra} g(\xi,\varphi_m(\xi))\]
and $g(\xi,\varphi_m(\xi))=g(\xi,\rho_m \rho_m^\top(\xi))\ge 0$.  It follows that the operator
\[ [\sigma\wh{\otimes}1,S_1] \]
is positive (and a fortiori positive modulo compacts).  By a well-known criterion of Connes-Skandalis (cf. \cite[Proposition 17.2.7]{Blackadar}), the cycles $(\mathcal{H},\pi_1,S_1)$, $(\mathcal{H},\pi_1,\sigma\wh{\otimes}1)$ are operator homotopic.
\end{proof}

\appendix

\section{The case of non-complete manifolds} \label{AppendixA}

This appendix follows up Section \ref{Section1}, and uses the same notation. Recall that on non-complete manifolds, the main issue comes from the possible non-self-adjointness of the Dirac operator, so that $K$-homology classes have to be constructed with slightly more care. Adapting the techniques given in \cite{Higson} or \cite[Chapter 10]{HigsonRoe}, we generalize the construction of the class $[\st{D}_{M,\Gamma}] \in K^0(G \ltimes \Cl_{\Gamma}(M))$ to the case where $M$ is not complete.  Throughout this section $\sim$ stands for equality modulo compact operators.\\

Let $\chi: \bR \to [-1,1]$ be a `normalizing function', i.e a continuous odd function which is positive on $(0,\infty)$ and tends to $1$ at $ \infty$, and let $H=L^2(M,E)$. Cover $M$ with relatively compact $G$-invariant open sets $U_j$ and let $f_j^2$ be a $G$-invariant partition of unity subordinate to the cover.  Let $\st{D}_j$ be a $G$-equivariant essentially self-adjoint operator agreeing with $\st{D}$ on $U_j$ (for example, compress $\st{D}$ between suitable $G$-invariant bump functions with support contained in a compact neighborhood of $U_j$).  Let
\[ F=\sum_j f_j \chi(\st{D}_j)f_j \]
which converges in the strong operator topology to a bounded self-adjoint operator. 

\begin{lemma}[\cite{HigsonRoe} Lemma 10.8.3]
\label{lem:EqualModCompacts}
Let $\st{D}_1$, $\st{D}_2$ be essentially self-adjoint first order differential operators on $M$ which restrict to the same elliptic operator on some open subset $U \subset M$.  Let $g \in C_0(U)$.  Then $\chi(\st{D}_1)g \sim \chi(\st{D}_2)g$.
\end{lemma}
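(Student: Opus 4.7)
The strategy is the classical finite-propagation-speed argument from the theory of the spectral theorem on manifolds \cite[Chapter 10]{HigsonRoe}. It combines two ingredients. First, for any $\phi \in C_0(\bR)$ and any compactly supported $g$, the operators $\phi(\st{D}_i) g$ are \emph{individually} compact by a Rellich-type argument of the same flavor as the one used in the proof of Theorem \ref{thm:FundClass}. Second, any two normalizing functions differ by a function in $C_0(\bR)$. Together these imply that it suffices to exhibit a \emph{single} normalizing function $\chi_T$ for which the conclusion holds: for any other normalizing function $\chi$ one has $\chi - \chi_T \in C_0(\bR)$, so $(\chi - \chi_T)(\st{D}_i) g$ is compact for each $i$ separately and the result for $\chi$ follows by subtraction. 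By density of $C_c(U)$ in $C_0(U)$ and norm-boundedness of $\chi(\st{D}_i)$, one may also reduce to the case $g \in C_c(U)$; set $K = \supp(g)$ and $\delta = d(K, M \setminus U) > 0$.

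My distinguished choice is a \emph{bandlimited} normalizing function, i.e.\ one whose distributional Fourier transform is supported in an interval $[-T, T]$ with $cT < \delta$, where $c$ bounds the propagation speed of $\st{D}_1$ and $\st{D}_2$ on a neighborhood of $K$. An explicit example is $\chi_T(x) = (2/\pi)\tn{Si}(Tx)$ (the rescaled sine integral): its derivative $\sin(Tx)/x$ is bandlimited in $[-T,T]$, and the antiderivative relationship $i\xi\,\wh{\chi}_T(\xi) = \wh{\chi_T'}(\xi)$ forces $\wh{\chi}_T$ to be supported in $[-T, T]$ as a distribution (with at most a delta mass at $0$). Finite propagation speed for the unitary groups $e^{it\st{D}_i}$ then gives, for every $v \in L^2(M,E)$ with $\supp(v) \subset K$ and every $|t| < \delta/c$, the equality $e^{it\st{D}_1} v = e^{it\st{D}_2} v$, since $\st{D}_1$ and $\st{D}_2$ coincide on $U$ and the wave from $K$ stays inside $U$ up to time $\delta/c$. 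The corresponding statement for bandlimited functional calculus yields $\chi_T(\st{D}_1) v = \chi_T(\st{D}_2) v$, whence $\chi_T(\st{D}_1) g = \chi_T(\st{D}_2) g$ exactly (not merely modulo compacts), completing the proof.

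The main technical subtlety lies in this last step, namely extending finite propagation from the unitary group to the bandlimited $\chi_T$: the distributional Fourier transform $\wh{\chi}_T$ is not integrable, so the formal identity $\chi_T(\st{D}_i) v = (2\pi)^{-1}\int \wh{\chi}_T(t)\, e^{it\st{D}_i} v\, dt$ must be interpreted with care. The standard resolution is to approximate $\chi_T$ by Schwartz functions whose Fourier transforms are still supported strictly inside $(-\delta/c, \delta/c)$ (e.g.\ by convolving $\wh{\chi}_T$ with a narrow smooth bump), apply the bona fide Fourier formula to these approximations, and then pass to the strong operator limit on $K$-supported vectors, where finite propagation applies uniformly. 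This is the content of the classical finite-propagation-speed argument for the spectral theorem as carried out in \cite[Chapter 10]{HigsonRoe}.
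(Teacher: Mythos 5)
Your proof is correct and reproduces the standard finite-propagation-speed argument for this lemma, which is precisely the argument in the cited reference \cite[Lemma 10.8.3]{HigsonRoe}; the paper itself does not reprove it. The two reductions (to a single normalizing function via the Rellich lemma, and to $g \in C_c(U)$ by density), the choice of a bandlimited normalizing function, the wave-equation core, and your handling of the distributional Fourier transform of the sine integral by multiplying $\chi_T$ by Schwartz functions $\wh{\phi}_n$ whose Fourier transforms remain supported inside $(-\delta/c,\delta/c)$ and passing to a strong limit, all match the Higson--Roe treatment and correctly address the one genuine subtlety.
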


Now, let $a=h\wh{\otimes} \alpha \in G \ltimes \Cl_{\Gamma}(M)$ where $h \in C^\infty(G)$ and $\alpha \in \Cl^\infty_{\Gamma,c}(M)$.  Choose a $G$-invariant compactly supported cut-off function $f$ equal to $1$ on the support of $\alpha$, and let $\st{D}_f$ be an essentially self-adjoint operator that agrees with $\st{D}$ in a neighborhood of the support of $f$. Then, the lemma above (combined with the $G$-invariance of $f$) shows that
\[ [F,a] \sim [\chi(D_f), a] \, ; \quad a(F^2-1) \sim a(\chi(D_f)^2 - 1). \]
Following the proof of Theorem \ref{thm:FundClass} in the complete case, the operators on the right hand sides are compact, so that $(H,F)$ is a Fredholm module. \\

Moreover, if $F'$ is an operator constructed the same way as $F$ but from a different partition of unity, Lemma \ref{lem:EqualModCompacts} shows for every $a \in G\ltimes \Cl_\Gamma(U)$,
\[ a(FF'+F'F)a^\ast \sim 2a\chi(\st{D}_f)^2a^\ast \ge 0 \tn{ modulo compact operators} \]
($f$ being a function depending on $a$ as above), which is a well-known sufficient condition for $F'$ to be norm-continuously homotopic to $F$ (see \cite{Skandalis}). Therefore, the $K$-homology class $[(H,F)] \in K^0(G \ltimes \Cl_{\Gamma}(M))$ does not depend on the choice of the partition of unity (and the cover). Finally, if $M$ is complete, a similar calculation shows that $[(H,F)] = [(H, \chi(\st{D}))]$ in $K^0(G \ltimes \Cl_{\Gamma}(M))$. 

\section{Proof of Proposition \ref{prop:OpenRestrict}} \label{AppendixB}

The proof is standard and follows closely \cite[Proposition 10.8.8]{HigsonRoe}. We include it for the convenience of the reader. Let $U$ be a $G$-invariant open set of $M$, and $\iota_U^\ast \colon \K^{0}(G\ltimes \Cl_\Gamma(M)) \rightarrow \K^{0}(G\ltimes \Cl_\Gamma(U))$ be the associated extension-by-$0$ homomorphism. Recall we want to prove that $ \iota_U^\ast[\st{D}_{M,\Gamma}]=[\st{D}_{U,\Gamma}]$.

\begin{proof}
Let $(H, F) := (L^2(M,E), F = \st{D}(1+\st{D}^2)^{-\frac{1}{2}})$ be the Fredholm module of Theorem \ref{thm:FundClass}, and let $P$ denote the orthogonal projection $H \rightarrow H_U=L^2(U,E)$ (given by multiplication by the characteristic function of the subset $U$). Then
\[ PFP \colon H_U \rightarrow H_U \]
is a bounded operator, and $(H_U, PFP)$ is a Fredholm module over $G\ltimes \Cl_\Gamma(U)$ (to see this, note that $P$ commutes with $G\ltimes \Cl_\Gamma(U)$, and $P|_{H_U}=1$). \\


Let $Q$ be the orthogonal projection to $L^2(M \setminus U,E|_{M \setminus U})$, i.e $Q=1-P$.  In terms of the decomposition $H=PH\oplus QH$, the operator $F$ becomes the $2 \times 2$ matrix:
\[ F=\left(\begin{array}{cc} PFP & PFQ\\QFP & QFQ \end{array}\right).\]
Notice that for $a \in G\ltimes \Cl_\Gamma(U)$, $aQ=0$.  Moreover (recall $\sim$ stands for equality up to compact operators)
\[ aPFQ=PaFQ \sim PFaQ=0.\]
Consequently
\[ aF \sim \left(\begin{array}{cc} aPFP & 0\\0 & 0 \end{array}\right).\]
This shows that the restriction of the $G\ltimes \Cl_\Gamma(M)$-Fredholm module $(H,F)$ to a $G\ltimes \Cl_\Gamma(U)$-Fredholm module equals $(H_U,PFP)$ up to a locally compact perturbation (the entries $QFP$, $PFQ$ and $QFQ$ in the matrix for $F$) and a degenerate module (namely $(QH,0)$).  Thus
\[ \iota_U^\ast[(H,F)]=[(H_U,PFP)]. \]
It remains to check that the cycle $(H_U,PFP)$ for $K^0(G\ltimes \Cl_\Gamma(U))$ is operator homotopic to $(H_U,F_U)$ where $F_U=\Sigma_j f_j \chi(\st{D}_j)f_j$ is the operator constructed in Appendix \ref{AppendixA}.  Let $a=h\wh{\otimes} \alpha$, $h \in C^\infty(G)$, $\alpha \in \Cl_{\Gamma,c}^\infty(U)$. Fix $j$ and consider 
\begin{equation} 
\label{eqn:aligned}
a^\ast(PFPf_j\chi(\st{D}_j)f_j+f_j\chi(\st{D}_j)f_jPFP)a 
\end{equation}
as an operator on $H_U$.  Note that $Pa=a$ since $\alpha$ has support contained in the $G$-invariant set $U$.  Thus
\[ PFPf_j\chi(\st{D}_j)f_ja\sim PFPaf_j\chi(\st{D}_j)f_j=PFaf_j\chi(\st{D}_j)f_j\sim PFf_j\chi(\st{D}_j)f_ja. \]
Applying similar arguments to the other factors of $P$ in \eqref{eqn:aligned}, it follows that, modulo compact operators, the operator in \eqref{eqn:aligned} is
\[ a^\ast (Ff_j\chi(\st{D}_j)f_j+f_j\chi(\st{D}_j)f_jF)a \]
and the latter is positive modulo compact operators, by the results of Appendix \ref{AppendixA} applied to the operator $F$ on $M$.  We obtain that the operator in \eqref{eqn:aligned} is positive modulo compact operators. Since $af_j$ vanishes for all but finitely many $j$, we conclude that
\[ a^\ast (PFP F_U+F_U PFP)a \ge 0 \quad \text{mod }\mathcal{K}(H_U). \]
This proves $(H_U,PFP)$ is homotopic to the cycle $(H_U,F_U)$ from Appendix \ref{AppendixA}.
\end{proof}

\section{Proof of Proposition \ref{prop:DiracBoundary}} \label{AppendixC}

Again, the proof is standard and follows closely \cite[Proposition 11.2.15]{HigsonRoe} or \cite{BDTRelativeCycles}. It is included for the convenience of the reader. \\

Recall the context: $M=\partial{\widetilde{M}}$ is the boundary of a Riemannian $G$-manifold $\widetilde{M}$, and let $W=\widetilde{M} \smallsetminus M$. Consider the $C^\ast$-algebra extension:
\begin{equation*}
0\rightarrow G\ltimes \Cl_\Gamma(W) \rightarrow G\ltimes \Cl_\Gamma(\widetilde{M}) \rightarrow G\ltimes \Cl_\Gamma(M) \rightarrow 0.
\end{equation*}  
and the corresponding boundary map $\partial$ in $K$-homology. Let $\ti{E}\rightarrow \ti{M}$ be an ungraded Clifford module bundle, $\ti{\st{D}}$ a Dirac operator acting on sections of $\ti{E}$, and $[\ti{\st{D}}_{W,\Gamma}] \in K^1(G\ltimes \Cl_\Gamma(W))$ the corresponding K-homology class. The restriction to the boundary $E=\ti{E}|_{\partial \ti{M}}$ becomes a $\bZ_2$-graded $\Cliff(TM)$-module bundle with the graded subbundles $E^{\pm}$ being the $\pm \i$-eigenbundles of $\c(n)$, where $n$ is the inward unit normal vector to the boundary.  Let $\st{D}$ be a Dirac operator acting on sections of $E$ and $[\st{D}_{M,\Gamma}] \in K^0(G\ltimes \Cl_\Gamma(M))$ the corresponding K-homology class. We want to show that $\partial[\widetilde{\st{D}}_{W,\Gamma}]=[\st{D}_{M,\Gamma}]$.

\begin{proof}
Let $\varepsilon > 0$ such that $(0,\varepsilon) \times M \subset W$ be a collar neighborhood of $M$ for which $G$ acts trivially on the $(0, \varepsilon)$ part. We then have the following morphisms of extensions 
\[
\xymatrix{ 
0 \ar[r] & G\ltimes \Cl_\Gamma(W) \ar[r]  & G\ltimes \Cl_\Gamma(\widetilde{M}) \ar[r]  & G\ltimes \Cl_\Gamma(M) \ar[r] \ar@{=}[d] & 0 \\
0 \ar[r] & G\ltimes \Cl_\Gamma((0, \varepsilon) \times M) \ar[r] \ar[u]_{\text{extension-by-0}} & G\ltimes \Cl_\Gamma([0, \varepsilon) \times M) \ar[r] \ar[u]_{\text{extension-by-0}} & G\ltimes \Cl_\Gamma(M) \ar[r] \ar@{=}[d] & 0 \\
0 \ar[r] & C_0(0, \varepsilon) \, \wh{\otimes} \, \big(G\ltimes \Cl_\Gamma(M)\big) \ar[u]_\simeq \ar[r] & C_0[0, \varepsilon) \, \wh{\otimes} \, \big(G\ltimes \Cl_\Gamma(M)\big) \ar[u]_\simeq \ar[r] & G\ltimes \Cl_\Gamma(M) \ar[r] & 0 }
\]
Notice that the bottom extension is simply the cone extension, so that the associated boundary map is the suspension isomorphism
\[ \delta : \K^{\bullet + 1}\big(C_0(0, \varepsilon) \wh{\otimes} (G\ltimes \Cl_\Gamma(M))\big) \to \K^{\bullet}(G\ltimes \Cl_\Gamma(M)). \] 
Since the class $[\ti{\st{D}}_{W,\Gamma}]$ does not depend on the choice of the metric, we can equip $(0, \varepsilon) \times M$ with the product metric. This way, the class $[\widetilde{\st{D}}_{W,\Gamma}] \in \K^{1}(G\ltimes \Cl_\Gamma(W))$ identifies over the collar neighborhood with the exterior $\KK$-product $[\st{D}_{(0,\varepsilon)}] \wh{\otimes} [\st{D}_{M,\Gamma}]$, where $\st{D}_{(0,\varepsilon)}$ is the Dirac operator on $(0,\varepsilon)$ and $[\st{D}_{(0,\varepsilon)}]\in \K^1(C_0(0,\varepsilon))$. But the map $[\st{D}_{(0,\varepsilon)}] \wh{\otimes} \, \bm{.}$ is inverse to the suspension isomorphism, so that
\[ \delta([\st{D}_{(0,\varepsilon)}] \wh{\otimes} [\st{D}_{M,\Gamma}])=[\st{D}_{M,\Gamma}]\]
The conclusion then follows from the naturality of the boundary map.
\end{proof}


\providecommand{\bysame}{\leavevmode\hbox to3em{\hrulefill}\thinspace}
\providecommand{\MR}{\relax\ifhmode\unskip\space\fi MR }
\providecommand{\MRhref}[2]{%
  \href{http://www.ams.org/mathscinet-getitem?mr=#1}{#2}
}
\providecommand{\href}[2]{#2}

\end{document}